\renewcommand{\tocsection}[3]{%
  \indentlabel{\@ifnotempty{#2}{\bfseries\ignorespaces#1 #2\quad}}\bfseries#3}
\newcommand\bes{\begin{eqnarray}}
\newcommand\ees{\end{eqnarray}}
\newcommand\R{\mathbb R}
\newtheorem{theorem}{Theorem}[section]
\newtheorem{lemma}[theorem]{Lemma}
\newtheorem{remark}[theorem]{Remark}
\numberwithin{equation}{section}
\theoremstyle{plain}
\newtheorem*{theorem*}{Theorem A}
\newcommand\bess{\begin{eqnarray*}}
	\newcommand\eess{\end{eqnarray*}}
\newcommand{\lf}{\left}
\newcommand{\rr}{\right}
\newcommand{\dd}{\displaystyle}
\newcommand{\td}{\tilde}
\newcommand\yy{\infty}
\newcommand{\ol}{\overline}
\newcommand{\rd}{{\rm d}}
\begin{document}

\title[Dynamics of a competition model]{Long-time dynamics of a competition model with\\  nonlocal diffusion and free boundaries: \\ Vanishing and spreading of the invader}
\author[Y. Du, W. Ni, L. Shi ]{Yihong Du$^\dag$,  Wenjie Ni$^\dag$ and Linfei Shi$^{\ddag}$}
\thanks{\hspace{-.5cm}
$^\dag$ School of Science and Technology, University of New England, Armidale, NSW 2351, Australia.
\\
$^{\ddag}$ School of Mathematics and Statistics, Beijing Institute of Technology, Beijing 100081, China.
\\
\mbox{\ \  Emails:} ydu@une.edu.au (Y. Du),\ wni2@une.edu.au (W. Ni),\ 3120205702@bit.edu.cn (L. Shi)}

\date{\today}

\begin{abstract}

In this work,   we investigate the long-time dynamics of a two species competition model of Lotka-Volterra type with nonlocal diffusions. One of the species, with density $v(t,x)$,  is assumed to be a native  in the environment (represented by the real line $\R$), while the other species, with density $u(t,x)$, is an invading species which invades the territory of $v$ with two fronts, $x=g(t)$ on the left and $x=h(t)$ on the right. So the population range of $u$ is the evolving interval $[g(t), h(t)]$ and the reaction-diffusion equation for $u$ has two free boundaries, with $g(t)$  decreasing in $t$ and $h(t)$ increasing in $t$, and the limits $h_\infty:=h(\infty)\leq \infty$ and $g_\infty:=g(\infty)\geq -\infty$ thus always exist.
 We obtain detailed descriptions of the long-time dynamics of the model according to whether $h_\infty-g_\infty$ is $\infty$ or finite.
In the latter case, we reveal in what sense the invader $u$ vanishes in the long run and $v$ survives the invasion, while in the former case, we obtain a rather satisfactory description of the long-time asymptotic limit for both $u(t,x)$ and $v(t,x)$ when a certain parameter $k$ in the model is less than 1. This research is continued in a separate work, where sharp criteria are obtained to distinguish the case $h_\infty-g_\infty=\infty$ from the case $h_\infty-g_\infty$ is  finite, and new phenomena are revealed for the case $k\geq 1$. The techniques developed in this paper should have applications to other models with nonlocal diffusion and free boundaries.

\bigskip

\noindent \textbf{Keywords}: Nonlocal diffusion; Competition; Free
boundary.
\medskip

\noindent\textbf{AMS Subject Classification (2000)}: 35K57,
35R20

\end{abstract}

\maketitle
%\tableofcontents

\section{Introduction}
We are interested in the long-time dynamics of the following Lotka-Volterra type competition model with nonlocal diffusion and free boundaries
\begin{align}\label{KnK1.2}
	\begin{cases}
	\dd	u_t = d_1\int_{g(t)}^{h(t)}J_1(x - y)u(t, y)\rd y - d_1u + u(1 - u - kv),   &
		t > 0, ~g(t) < x < h(t),\\[4mm]
\dd		v_t = d_2\int_\mathbb{R}J_2(x - y)v(t, y)\rd y - d_2v + \gamma v(1 - v - hu),   &
		t > 0,~x\in\mathbb{R},\\[3mm]
		u(t, x) = 0,  &
		t \geq 0,\ x \not\in (g(t), h(t)),
		\\[2mm]
	\dd	h^{\prime} (t) = \mu\int_{g(t)}^{h(t)}\int_{h(t)}^{\infty}J_1(x - y)u(t, x)\rd y\rd x,  &
		t > 0,\\[4mm]
	\dd	g^{\prime} (t) = -{\mu}\int_{g(t)}^{h(t)}\int_{-\infty}^{g(t)}J_1(x - y)u(t, x)\rd y\rd x,  &
		t > 0,\\[3mm]	
		h(0) = -g(0) = h_0 > 0,\ u(0, x) = u_0(x), \
		 v(0, x) = v_0(x),
		&
		x \in\mathbb{R},
	\end{cases}
\end{align}
where $d_1, d_2, h, k, \gamma, \mu$ are given positive constants, and the initial functions satisfy 
\begin{align}\label{KnK1.3}
		u_{0} \in C(\R), ~u_{0}(x) = 0 ~ {\rm for}~|x|\geq h_0, ~ u_{0}(x) > 0  ~ {\rm for}~|x|< h_0,\ 
		v_0\in C_b(\mathbb{R}), ~ v_{0}(x) \geq 0 {\rm ~in~} \mathbb{R},
\end{align}
where $C_b(\mathbb{R})$ is the space of continuous and bounded functions in $\mathbb{R}$. The kernel functions $J_1, J_2$ are assumed to satisfy\smallskip

\noindent$(\mathbf{J}):$ \ \ \ $J_i\in C_b(\mathbb{R})$, $J_i(x)=J_i(-x) \geq 0,$ $J_i(0) > 0,$ $\dd\int_{\mathbb{R}}J_i(x)dx = 1$ for $i=1,2$.

\medskip

System \eqref{KnK1.2} may be viewed as a model describing the invasion of  a species with density $u$ into an environment (represented by $\R$ here) where a native competitor, with density $v$, has already appeared.  The population range of $u$ is given by the time-dependent interval $[g (t), h(t)]$, with $x=g (t)$ and $x=h(t)$ known as the free boundaries in the model, which represent the range boundary of $u$, or its invading fronts. 

The corresponding local diffusion version of \eqref{KnK1.2} was first studied in \cite{DuLin}.
As will be explained below, the nonlocal diffusion model \eqref{KnK1.2} poses several technical difficulties in the mathematical treatment, and is capable of exhibiting strikingly different behaviour from its local diffusion correspondent.

We remark that \eqref{KnK1.2}  is a reduced version of the following equivalent but more general looking system:
\begin{align}\label{KnK1.1}
	\begin{cases}
	\dd	U_t = D_1\int_{G(t)}^{H(t)}J_1(x - y)U(t, y)\rd y - D_1U + U(a_1 - b_1U - c_1V),   &
		t > 0, ~G(t)	< x < H(t),\\
	\dd	V_t = D_2\int_\mathbb{R}J_2(x - y)V(t, y)\rd y - D_2V + V(a_2 - b_2V - c_2U),   &
		t > 0,~x\in\mathbb{R},\\
			U(t, x) = 0,  &
		t \geq 0,x \not\in (G(t), H(t)),\\
\dd		H^{\prime} (t) = \hat{\mu}\int_{G(t)}^{H(t)}\int_{H(t)}^{\infty}J_1(x - y)U(t, x)\rd y\rd x,  &
		t > 0,\\
\dd		G^{\prime} (t) = -\hat{\mu}\int_{G(t)}^{H(t)}\int_{-\infty}^{G(t)}J_1(x - y)U(t, x)\rd y\rd x,  &
		t > 0,\\	
		H(0) = -G(0) = H_0 > 0, U(0, x) = U_0(x), &
		-H_0 \leq x \leq H_0,\\
		V(0, x) = V_0(x),
		&
		t \geq 0,x \in\mathbb{R},
	\end{cases}
\end{align}
Indeed, let
$$u(t, x) = \frac{b_1}{a_1}U\left(\frac{t}{a_1}, x\right), ~v(t, x) = \frac{b_2}{a_2}V\left(\frac{t}{a_1}, x\right), ~h(t) = H\left(\frac{t}{a_1}\right), ~g(t) = G\left(\frac{t}{a_1}\right),$$
$$d_1 = \frac{D_1}{a_1}, ~d_2 = \frac{D_2}{a_1}, ~\gamma = \frac{a_2}{a_1}, ~k = \frac{a_2c_1}{a_1b_2}, ~h = \frac{a_1c_2}{a_2b_1}, ~\mu=\frac{1}{b_1}\hat{\mu}.$$
Then a simple calculation shows that (\ref{KnK1.1}) is reduced to  \eqref{KnK1.2}.
\medskip

Several closely related models have been studied recently. In \cite{DY-2022-DCDS}, the situation that
 the population range of $v$ is the same evolving interval $[g(t), h(t)]$ was considered, and a technical difficulty in treating the nonlocal competition model has been revealed: Due to the lack of compactness of the solutions of the nonlocal diffusion problem, the fact $[g(t), h(t)]$ remains uniformly bounded for all time $t>0$, does not lead to the conclusion of vanishing as in the corresponding local diffusion models. To recover the vanishing property, \cite{DY-2022-DCDS} relied on the following extra assumption
 \begin{equation}\label{J>0}
 J_i(x)>0 \mbox{ for all } x\in\R,\ i=1,2,
 \end{equation}
 and  a trick relating the left and right derivatives of
 $
 M(t):=\max_{x\in [g(t), h(t)]}u(t,x)$ to $ u_t(t,\xi(t))$  for some suitable $ \xi(t)\in [g(t), h(t)]$.
 It was conjectured in \cite{DY-2022-DCDS} that the assumption \eqref{J>0} is unnecessary. 
 
 In \cite{CLWZ}, the same system \eqref{KnK1.2} was considered, while \cite{Zwy-2022-dcds-b} considered the special case $d_1=d_2$ and $J_1=J_2$. In \cite{LLW}, a predator-prey system with both species evolving over the same interval $[0, h(t)]$ was investigated. In these papers, the existence and uniqueness of the solution was established, together with various results on the long-time dynamics of the model for certain selected ranges of the parameters. However, to prove vanishing of $u$, they used the same trick as in \cite{DY-2022-DCDS} and made the same additional assumption \eqref{J>0} for $J_1(x)$.
 
 In this work, we will introduce new techniques to treat \eqref{KnK1.2} without requiring \eqref{J>0}, and obtain  precise description of the long-time dynamics of the model for broader parameter ranges.  Moreover, we will reveal
 several new behaviours of the model.

\medskip

Let us start by recalling the following well-posedness result:
\begin{theorem*}{\rm (\!\! \cite{CLWZ})}
Assume   $(\mathbf{J})$ holds,  and  the initial functions satisfy \eqref{KnK1.3}. Then \eqref{KnK1.2} admits a unique solution $(u, v, g, h)$ defined for all $t > 0.$
\end{theorem*}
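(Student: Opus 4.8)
The plan is to first construct a unique solution on a short time interval $[0,T]$ by a contraction mapping argument, and then extend it to all $t>0$ using a priori bounds. To begin, I would \emph{freeze} the free boundaries: for a nonincreasing $g\in C([0,T])$ and a nondecreasing $h\in C([0,T])$ with $g(0)=-h_0=-h(0)$, both Lipschitz with a uniform constant to be chosen later, set $\Omega_T^{g,h}:=\{(t,x):0\le t\le T,\ g(t)\le x\le h(t)\}$ and extend any candidate $u$ by $0$ outside $[g(t),h(t)]$, so that $\int_{g(t)}^{h(t)}J_1(x-y)u(t,y)\,\rd y=(J_1*u)(t,x)$. Since neither equation in \eqref{KnK1.2} contains a spatial derivative, for frozen $(g,h)$ the pair $(u,v)$ solves a system of integral equations in $C(\Omega_T^{g,h})\times C_b([0,T]\times\R)$ obtained by integrating the right-hand sides in $t$, where a point $x$ that enters the $u$-domain at some time $\tau(x)$ carries the initial value $u(\tau(x),x)=0$, consistent with $u_0$ vanishing for $|x|\ge h_0$. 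Because $\|J_i*w\|_\infty\le\|w\|_\infty$ and the Lotka--Volterra reaction terms are locally Lipschitz, this map is a contraction on a suitable closed ball once $T$ is small, which produces a unique $(u,v)=:\mathcal{F}(g,h)$ that is moreover Lipschitz-continuous in $(g,h)$.

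Next I would close the loop on the free boundaries. With $(u,v)=\mathcal{F}(g,h)$, define
\[
\wht h(t):=h_0+\mu\int_0^t\!\!\int_{g(s)}^{h(s)}\!\!\int_{h(s)}^{\infty}J_1(x-y)u(s,x)\,\rd y\,\rd x\,\rd s,\qquad
\wht g(t):=-h_0-\mu\int_0^t\!\!\int_{g(s)}^{h(s)}\!\!\int_{-\infty}^{g(s)}J_1(x-y)u(s,x)\,\rd y\,\rd x\,\rd s.
\]
Since $u$ is bounded and the inner double integral depends continuously on $s$, $\wht h$ (resp. $\wht g$) is $C^1$, nondecreasing (resp. nonincreasing), and Lipschitz with a constant controlled by $\mu\|u\|_\infty\sup_s(h(s)-g(s))$; choosing the a priori Lipschitz constant of the first step accordingly, the map $\mathcal{G}:(g,h)\mapsto(\wht g,\wht h)$ sends the chosen closed set into itself, and together with the Lipschitz dependence of $\mathcal{F}$ it is a contraction for $T$ small. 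Its unique fixed point yields the unique solution of \eqref{KnK1.2} on $[0,T]$. A nonlocal strong maximum principle, using $u_0>0$ on $(-h_0,h_0)$, $J_1(0)>0$ and the continuity of $J_1$, then shows $u>0$ in the interior of $\Omega_T^{g,h}$, so that $h'(t)>0>g'(t)$ and the formulation does not degenerate.

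To pass to a global solution, I would compare with the spatially homogeneous equations: the comparison principle keeps $u\ge0$ and $v\ge0$, and then $1-u-kv\le 1-u$ and $1-v-hu\le 1-v$ give, for as long as the solution exists, $\|u(t,\cdot)\|_\infty\le\max\{1,\|u_0\|_\infty\}=:K_1$ and $\|v(t,\cdot)\|_\infty\le\max\{1,\|v_0\|_\infty\}$. Hence $h'(t)\le\mu K_1\,(h(t)-g(t))$ and similarly for $-g'(t)$, so $h(t)-g(t)$ grows at most exponentially and stays finite on every bounded time interval. The local solution can therefore be restarted at $t=T$ with the admissible data $(u(T,\cdot),v(T,\cdot))$ on $[g(T),h(T)]$ and extended step by step to all $t>0$; uniqueness at each step, inherited from the contraction argument, patches into global uniqueness.

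The hard part will be the first step. Because the nonlocal diffusion operator acts on the moving, gradually expanding interval $[g(t),h(t)]$, and because there is no parabolic smoothing to exploit, one must set up the integral equations so that they are simultaneously well-posed, stable under the iteration, and compatible with the boundary behaviour $u(t,g(t))=u(t,h(t))=0$; all estimates have to be extracted from $\|J_i*w\|_\infty\le\|w\|_\infty$ combined with Gronwall-type inequalities. Keeping the a priori Lipschitz bounds on $(g,h)$ consistent between the solve-for-$(u,v)$ step and the update-$(g,h)$ step is exactly what forces $T$ to be small, but is otherwise routine bookkeeping.
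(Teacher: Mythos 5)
This statement is quoted in the paper as Theorem A with attribution to \cite{CLWZ}; the paper itself gives no proof, so there is nothing internal to compare against. Your sketch — freezing $(g,h)$, solving the reaction part as a family of integral equations along vertical lines (no spatial derivatives, $\|J_i*w\|_\infty\le\|w\|_\infty$), closing the loop with a second contraction on $(g,h)$, and extending globally via the $L^\infty$ bounds and the Gronwall estimate $h'(t)\le \mu K_1(h(t)-g(t))$ — is exactly the standard scheme used in \cite{CLWZ} and in the foundational single-species paper \cite{Cao-2019-JFA}, so it is the right approach and contains no gap at the level of detail presented.
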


For the long-time dynamics, we will use some terminologies arising from  the corresponding ODE version of \eqref{KnK1.2}, namely
\begin{equation}\label{ODE}
 \begin{cases}
 u'=u(1-u-kv),\\ v'=\gamma v(1-v-hu), \\
u(0)>0,\ v(0)>0,
\end{cases}
\end{equation}
which always has
 the trivial equilibrium $R_0=(0,0)$ and semi-trivial equilibria  $R_1=(1, 0)$ and $R_2=(0,1)$.
Moreover, if $\min\{h, k\}>1$ or $\max\{h, k\}<1$, the problem has a unique positive equilibrium
$R_*:=(\frac{1-k}{1-hk}, \frac{1-h}{1-hk})$. Regarding the long-time dynamics of \eqref{ODE}, the following conclusions are well known:
\begin{enumerate}
\item $R_0$ is always unstable;
\item  when $\max\{h, k\}<1$, $R_*$ is globally asymptotically stable;
\item when $k<1<h$, $R_1$ is globally asymptotically stable;
\item when $h<1<k$, $R_2$ is globally asymptotically stable;
\item when $\min\{h, k\}>1$, both $R_1$ and $R_2$ are locally asymptotically stable while $R_*$ is unstable.
\end{enumerate}

In case (2), the competitors co-exist in the long run, and it is often referred to as the {\it weak competition} case, where no competitor wins or loses in the competition.
Cases (3) and (4) are known as the {\it weak-strong competition} cases. In case (3), the competitor $u$ wipes $v$ out in the long run and wins the competition; so we will call $u$ the strong competitor
and $v$ the weak competitor. Analogously $u$ is the weak competitor and $v$ is the strong competitor in case (4).
Case (5) is the {\it strong competition} case, and depending on the location of $(u(0), v(0))$ in the first quadrant of $\R^2$, one of $R_1$ and $R_2$ is the global attractor of \eqref{ODE} (except when $(u(0), v(0))$ lies on the one dimensional  stable manifold of $R_*$).
We will keep using these terminologies which are determined solely by $h$ and $k$ for \eqref{KnK1.2}.

To state our main results in this paper, we also need to recall some properties of  the principle eigenvalue for some associated nonlocal diffusion operators. For $\Omega=(a,b)$ a finite interval, under our assumption ({\bf J}),  it is well known  that  the
 following eigenvalue problem
\begin{equation}\label{eigen}
	\begin{aligned}
\lambda\varphi=		\mathcal{L}_{\Omega} [\varphi](x): = d_1\left[\int_{\Omega}J_1(x - y)\varphi(y)\rd y - \varphi(x)\right] , ~\varphi\in C(\overline{\Omega}), 
		\nonumber
	\end{aligned}
\end{equation}
 has a principal eigenvalue 
$\lambda=\lambda_p( \mathcal{L}_{\Omega})$ associated with a positive eigenfunction $\varphi$ (e.g.,\cite{BCV, C, LCW}), and it has the following properties:\medskip

\noindent {\bf Proposition B.} (\!\!\cite[Proposition 3.4]{Cao-2019-JFA}){\it \
Assume that $l > 0$, and $J_1$ satisfies $(\mathbf{J})$.  Then 
	
	{\rm (i)} $\lambda_p(\mathcal{L}_{(a, a+l)})=\lambda_p(\mathcal{L}_{(0, l)})$  for all  $ a\in\R$,
	
	{\rm(ii)} $\lambda_p(\mathcal{L}_{(0, l)})$ is  strictly increasing and continuous in $l$,
	
	{\rm(iii)} $\lim\limits_{l \rightarrow \infty}\lambda_p(\mathcal{L}_{(0, l)}) = 0$,
	
	{\rm(iv)} $\lim\limits_{l \rightarrow 0}\lambda_p(\mathcal{L}_{(0, l)}) =- d_1$.}

\medskip

We are now ready to state our main results in this paper.  Let $(u,v,g,h)$ be the solution of \eqref{KnK1.2}. Then  $g_\yy:=\lim_{t\to\yy}g (t)\in [-\infty, -h_0)$ and $h_\yy:=\lim_{t\to\yy}h(t)\in (h_0, \infty]$ always exist. We will  describe the long-time dynamics of \eqref{KnK1.2} according to the following two cases:
\[
{\bf (a)\!:} \  h_\infty-g_\infty<\infty,\ \ {\bf (b)\!:} \ h_\infty-g_\infty=\infty.
\]
In case {\bf (a)} the limit of the population range of $u$ is finite and one expects $u$ to vanish in the long run, while in case  {\bf (b)}, the limit of the size of the population range of $u$ is infinite, and successful invasion of $u$ is expected. 

For case {\bf (a)}, by introducing new techniques for the analysis of the function 
\[
U(t,x):=\int_{x-L}^{x+L}u(t,y)\rd y\]
 for suitable values of $L$, 
 we will prove the following result.

\begin{theorem}\label{th1.1} Assume that $(\mathbf{J})$  holds and $(u, v, g, h)$ is the unique solution of \eqref{KnK1.2}.  If  $h_\infty - g_\infty < \infty$, then necessarily
	\begin{align}\label{3.3}
	\mbox{ $d_1> 1-k$  and } \lambda_p(\mathcal{L}_{(g_\infty, h_\infty)})\leq  k- 1;
	\end{align}
	moreover
	\begin{equation}\label{vanishing}\begin{cases}
	\dd\lim_{t\to\infty}\int_{\R}u(t,x)dx=0,\\[2mm]
	\dd\lim_{t\to\infty}\int_L^L|v(t,x)-1)|dx=0 \mbox{ for every } L>0,\\[2mm]
	\dd\lim_{t\to\infty} v(t,x)=1 \mbox{ locally uniformly for } x\in\R\setminus (g_\infty, h_\infty).
	\end{cases}
	\end{equation}
\end{theorem}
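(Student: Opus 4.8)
The four assertions are obtained in the order: $\int_\R u(t,\cdot)\to0$ first (the heart of the matter), then the two statements on $v$, and finally the necessary condition \eqref{3.3}.

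\textbf{Preliminaries and a dissipation bound.} Comparison with scalar nonlocal logistic equations gives $0\le u\le C_u$ on $[g(t),h(t)]$ and $0\le v\le C_v$ on $\R$; hence $\prt u,\prt v$ are bounded and $u,v$ are uniformly Lipschitz in $t$, and $\limsup_{\ttt}\sup_{\R}v\le1$ by comparison with the constant supersolution solving $\bar z'=\gamma\bar z(1-\bar z)$. Since $g$ is non-increasing, $h$ non-decreasing and $h_\yy-g_\yy<\yy$, both $g_\yy,h_\yy$ are finite, $[g(t),h(t)]\subset\ol{I_\yy}$ with $I_\yy:=(g_\yy,h_\yy)$ of positive length, so $\lambda_p(\mathcal L_{I_\yy})$ is well defined and, by Proposition~B, lies in $(-d_1,0)$. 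Adding the two free-boundary equations and using Fubini,
\[
h'(t)-g'(t)=\mu\int_{g(t)}^{h(t)}u(t,x)\,\rho(t,x)\,dx,\qquad \rho(t,x):=\int_{x-g(t)}^{\yy}J_1+\int_{h(t)-x}^{\yy}J_1\ \ge\ 0,
\]
so integrating in $t$ and using $h_\yy-g_\yy<\yy$ gives the dissipation bound $\int_0^\yy\!\int_{g(t)}^{h(t)}u\rho\,dx\,dt<\yy$. Because $J_1$ is a continuous probability density there are $\delta_0,c_0>0$ with $\rho(t,x)\ge c_0$ whenever $x$ lies within distance $\delta_0$ of $g(t)$ or of $h(t)$; hence $\int_{T_0}^{\yy}Q(t)\,dt<\yy$ for $T_0$ large, where $Q(t):=\int_{h_\yy-\delta_0}^{h(t)}u(t,\cdot)+\int_{g(t)}^{g_\yy+\delta_0}u(t,\cdot)$ is the mass of $u$ in the two boundary layers.

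\textbf{Step 1: $\int_\R u(t,\cdot)\to0$ — the main obstacle.} For nonlocal diffusion, boundedness of $[g(t),h(t)]$ gives \emph{no} compactness for $u$ (solutions need not gain spatial regularity), so the usual route to vanishing fails; the new idea is to work with the spatial average $U(t,x):=\int_{x-L}^{x+L}u(t,y)\,dy$, which \emph{is} uniformly Lipschitz in $(t,x)$ and satisfies $U(t,x)=\int_\R u(t,\cdot)$ whenever $[x-L,x+L]\supset[g(t),h(t)]$. Differentiating and using the PDE (the boundary terms drop since $u=0$ at $g(t),h(t)$) gives, writing $A:=(x-L,x+L)\cap(g(t),h(t))$,
\[
\prt U(t,x)=d_1\!\int_{g(t)}^{h(t)}\!u(t,z)\Big[\int_A J_1(y-z)\,dy\Big]dz-d_1U(t,x)+\int_A u(1-u-kv)\,dy,
\]
and one checks that, once $L$ is chosen large relative to $h_\yy-g_\yy$ and to the effective range of $J_1$, the only terms here not expressible through $U$ are absorbed by $\int u\rho$ and the tail $\int_{|z|>L}J_1$. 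The proof then proceeds by contradiction: if $\int_\R u(t_n,\cdot)\ge2\epsilon_0>0$ along some $t_n\to\yy$, then (Lipschitz continuity) this mass persists over a fixed time window, and comparison of $u$ on the expanding interval $(g(t_n),h(t_n))$ with the linear nonlocal problem $w_t=\mathcal L_{(g(t_n),h(t_n))}[w]-C_3w$ (with $C_3\ge0$ bounding $-(1-u-kv)$ above) yields $u(t_n+s,x)\ge\kappa(s)\,\epsilon_0$ for all $x\in(g(t_n),h(t_n))$ and $s>0$, with $\kappa(s)>0$ \emph{independent of $n$}, since on a bounded interval the nonlocal diffusion semigroup has a strictly positive kernel for every $s>0$ (here $J_1(0)>0$ is used). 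For large $n$ the interval $(g(t_n),h(t_n))$ already meets both boundary layers, so $Q(t_n+s)\ge c_1>0$ for $s\in[1,2]$ uniformly in $n$, whence $\int_{t_n+1}^{t_n+2}Q\ge c_1$, contradicting $\int_{T_0}^\yy Q<\yy$. Thus $\int_\R u(t,\cdot)\to0$, the first line of \eqref{vanishing}. The point of the $U$-analysis is to render this mass-transport mechanism — interior mass of $u$ must feed the boundary layers at a definite rate, forcing $h_\yy-g_\yy$ to grow by a fixed amount — rigorous in spite of the absence of spatial compactness; I expect the choice of $L$ and the bookkeeping of the leakage terms in the $U$-identity to be the delicate point.

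\textbf{Step 2: behaviour of $v$.} Since $uv\ge0$, $v$ is a subsolution on $\R$ of the nonlocal Fisher–KPP equation $w_t=d_2(J_2*w-w)+\gamma w(1-w)$, so $v\le z$ where $z$ solves this equation from $v(T_1,\cdot)>0$. The difference $z-v\ge0$ solves a linear equation driven by the source $\gamma h\,uv$; as $\|u(t,\cdot)\|_{L^1(\R)}\to0$ (Step 1), $u\equiv0$ on $\R\setminus I_\yy$, and the $J_2$-heat kernel is bounded, $z-v$ stays small in $L^1_{loc}$, and uniformly on compact subsets of $\R\setminus I_\yy$. Combining this with the known convergence $z\to1$ locally uniformly (hair-trigger effect for the nonlocal Fisher–KPP equation, using $v(T_1,\cdot)>0$) gives $v\to1$ locally uniformly on $\R\setminus I_\yy$ and $\int_{-L}^{L}|v(t,x)-1|\,dx\to0$ for every $L>0$, i.e. the last two lines of \eqref{vanishing}.

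\textbf{Step 3: the necessary condition \eqref{3.3}.} Suppose $\lambda_p(\mathcal L_{I_\yy})>k-1$. Fix $\delta>0$ small; by Proposition~B(i)–(ii) and $h(t)-g(t)\to h_\yy-g_\yy$, choose a subinterval $I$ with $\ol I\subset(g(t_0),h(t_0))$ for some large $t_0$ and $\lambda_p(\mathcal L_I)>k-1+2\delta$. For $t\ge t_1\ge t_0$ we have $u<\delta/2$ (Step 1) and $v<1+\delta/(2k)$ (the upper bound of the Preliminaries), hence $1-u-kv>1-k-\delta$ on $I$; moreover $\mathcal L_{(g(t),h(t))}[u]\ge\mathcal L_I[u]$ on $I$ for $t\ge t_0$, so $u$ is a supersolution on $I$ of $w_t=\mathcal L_I[w]+(1-k-\delta)w$. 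Comparing with $\beta e^{(\lambda_p(\mathcal L_I)+1-k-\delta)(t-t_1)}\varphi_I$, where $\varphi_I>0$ is the principal eigenfunction and $\beta>0$ is small enough that $\beta\varphi_I\le u(t_1,\cdot)$ on $\ol I$, forces $u$ to grow exponentially, contradicting $u\le C_u$. Hence $\lambda_p(\mathcal L_{I_\yy})\le k-1$, and since $\lambda_p(\mathcal L_{I_\yy})>-d_1$ (Proposition~B) this also gives $d_1>1-k$, establishing \eqref{3.3}.
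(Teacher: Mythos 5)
Your Step 2 contains the decisive gap. You propose to control $v$ by writing $w:=z-v\ge 0$ (with $z$ the scalar nonlocal KPP solution) and arguing via Duhamel that $w$ stays small because the source $\gamma h\,uv$ is small in $L^1$. But $w$ satisfies $w_t=d_2(J_2*w-w)+\gamma(1-z-v)w+\gamma h\,uv$, and the zeroth-order coefficient $\gamma(1-z-v)$ is not uniformly nonpositive: it is only $o(1)$ on compact sets (where $z\to1$) and can be close to $+\gamma$ near spatial infinity when $v_0$ decays, so the linear propagator is not contractive in $L^1$ or $L^\infty$ and an $L^1$-small source does not yield smallness of $w$, even in $L^1_{loc}$. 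Worse, the claimed uniform smallness of $z-v$ on compact subsets of $\R\setminus(g_\infty,h_\infty)$ is exactly the hard point: on, say, $[h_\infty,h_\infty+1]$ the reaction coupling vanishes but the convolution $J_2*v$ feels the depression of $v$ inside $(g_\infty,h_\infty)$, which is what one is trying to rule out — the argument is circular. The paper has to work much harder here: it first proves a positive lower bound for $v$ on all compacts (comparison on large fixed intervals away from $(g_\infty,h_\infty)$, then a Fatou/limit-point stepping argument to push positivity into $(g_\infty,h_\infty)$), and then analyses the spatial average $V(t,x)=\frac1{2L}\int_{x-L}^{x+L}v$ using the one-sided bound $\limsup v\le1$ and the sign structure of $v(1-v)$ to get $V\to1$, before upgrading to locally uniform convergence outside $(g_\infty,h_\infty)$. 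Your Step 3 also has a (lesser, repairable) flaw: you invoke the pointwise bound $u<\delta/2$, but Step 1 only yields $\int_\R u\to0$, and pointwise smallness of $u$ is precisely what the paper's Theorem 1.2 shows to be delicate. The fix is the paper's route: keep the quadratic term, compare with the logistic nonlocal problem on $I$ whose principal eigenvalue is positive to get $\liminf_t u\ge W_I>0$ on $I$, and contradict either the $L^1$-vanishing or the finiteness of $h_\infty$ via a lower bound on $h'$.

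Your Step 1, by contrast, is correct and genuinely different from the paper's argument, and is worth noting. The paper proves $\int_\R u\to0$ by combining $g',h'\to0$ with Barbalat's lemma applied to $U(t,x)=\int_{x-L}^{x+L}u$, stepping the vanishing of local mass inward from the boundary in increments of $L=\epsilon/8$. You instead integrate $h'-g'$ in time to get the flux bound $\int^\infty\!\!\int u\rho<\infty$, and then use that the semigroup of $\mathcal L_{(a,b)}$ on a bounded interval is positivity improving with a kernel lower bound depending only on $b-a$ and on $J_1$ near $0$ (by chaining the convolution, using only $J_1(0)>0$), so that any persistent $L^1$-mass of $u$ is instantly spread to the boundary layers at a definite rate, contradicting the integrability of the flux. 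This is sound, arguably shorter, and equally avoids the hypothesis $J_1>0$ on all of $\R$; the lengthy discussion of the $U$-identity preceding it is not actually used by your contradiction argument and could be dropped.
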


One naturally wonders whether in Theorem \ref{th1.1} the conclusions in \eqref{vanishing} imply the following stronger statements, as  in the corresponding local diffusion model \cite{DuLin}: 
\begin{equation}\label{3.4}
		\begin{aligned}
			&\lim\limits_{t \rightarrow \infty}\max\limits_{x\in[g(t), h(t)]} u(t, x) = 0\ \mbox{ and } \  \lim\limits_{t \rightarrow \infty}v(t, x) = 1 {\rm ~locally ~uniformly ~ for~} x\in\mathbb{R}.
		\end{aligned}
	\end{equation}
It turns out that this question is not easy to answer. Using \eqref{vanishing} and viewing \eqref{KnK1.2}  as a perturbation of the corresponding ODE problem \eqref{ODE}, we are able to obtain a partial answer to this question. 

Let us now be more precise.
 Denote $\td d_1:=d_1+k-1$ and 
\begin{align}\label{F}
	F(s):=\gamma (1-hk)s^2+[\td d_1\gamma h-\gamma(1-hk)-d_2]s-\td d_1\gamma h,
\end{align}
which arises from the analysis of \eqref{ODE}.
Define the sets	$\Theta_1$ and $\Theta_2$  by
\begin{equation}\label{theta}
	\begin{cases}
		\Theta_1:=\{(\gamma,h,k,d_1,d_2)\in \R^5_+: \ F(s)\not= 0 \ {\rm for }\  s\in [0,1] \},\\
		\Theta_2:=\{(\gamma,h,k,d_1,d_2)\in\R^5_+: \ F(s)=0 \ {\rm has \ at\ least\ one\ root\ in}\  [0,1] \}.
	\end{cases}
\end{equation}
Clearly $\Theta_2=\R^5_+\setminus \Theta_1$, so for any given parameters $(\gamma,h,k,d_1,d_2)$ in \eqref{KnK1.2}, it belongs to either $\Theta_1$ or $\Theta_2$.
It is easy to show  (see Remark \ref{remak3} below) that $(\gamma,h,k,d_1,d_2)\in \Theta_1$ if
\begin{align*}
	d_1\geq 1 \  {\rm or}\ \dd kh\leq 1+\frac{d_2}{\gamma}.
\end{align*}
(So in the weak competition case where $k, h\in (0,1)$, we always have  $(\gamma,h,k,d_1,d_2)\in \Theta_1$. )
\medskip
	
Regarding the above question on the validity of \eqref{3.4}, we have the following answer.

\begin{theorem}\label{th1.2} Under the assumptions of Theorem \ref{th1.1},
	 the following conclusions hold:
	\begin{itemize}
	\item[{\rm (i)}] If $d_1\geq 1$ or if $d_1<1$ and $(\gamma,h,k,d_1,d_2)\in\Theta_1$, then
	\eqref{3.4} holds.
	\item[{\rm (ii)}] If $d_1<1$ and $(\gamma,h,k,d_1,d_2)\in\Theta_2$, then either \eqref{3.4} still holds or there is an open set\ $\Omega\subset (g_\yy,h_\yy)$  with $	|\Omega|=h_\yy-g_\yy$, $\Omega\not=(g_\infty, h_\infty)$, such that 
	\begin{equation}\label{Omega}
		\lim_{t\to \yy} (u(t,x),v(t,x))=\begin{cases} (0,1)\ \ &\mbox{locally uniformly for } x\in \Omega,\\[2mm]
		(kx_*-\td d_1,1-x_*)\ \ &\mbox{ for }  x\in  (g_\yy,h_\yy)\backslash \Omega,
		\end{cases}
	\end{equation}
	where $x_*\in (0, 1)$ is the smallest positive root of $F(s)=0$ in $[0, 1]$.
\end{itemize}
\end{theorem}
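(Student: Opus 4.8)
The plan is to freeze the spatial variable and read \eqref{KnK1.2} at a point as a planar ODE system that is an \emph{asymptotically autonomous} perturbation of a limiting autonomous system, with the perturbations controlled by the $L^{1}$-type conclusions \eqref{vanishing} of Theorem~\ref{th1.1}. I first dispose of the easy subcase $d_{1}\ge 1$ of (i). Since $\|u(t,\cdot)\|_{L^{1}(\R)}\to 0$ by \eqref{vanishing}, the quantity $\sigma(t):=d_{1}\|J_{1}\|_{L^{\infty}}\|u(t,\cdot)\|_{L^{1}(\R)}$ tends to $0$, and the first equation gives, for each fixed $x$, the scalar inequality $\partial_{t}u(t,x)\le\sigma(t)-u(t,x)^{2}$ (using $d_{1}\ge 1$ and $u,v\ge 0$). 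Comparison with $\dot z=\sigma(t)-z^{2}$ yields $\sup_{x}u(t,x)\to 0$; feeding this into the second equation and squeezing $v$ between spatially constant super/sub-solutions of the nonlocal logistic equation $w_{t}=d_{2}(J_{2}*w-w)+\gamma w(1-w)$ (the sub-solution carrying a vanishing extra loss $-\gamma h\|u(t,\cdot)\|_{L^{\infty}}w$), both tending to $1$, gives $v(t,x)\to 1$ locally uniformly. Hence \eqref{3.4} holds when $d_{1}\ge 1$.

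Assume now $d_{1}<1$. Fix $x_{0}\in(g_{\infty},h_{\infty})$ and set $a(t)=u(t,x_{0})$, $b(t)=v(t,x_{0})$ for $t$ beyond the instant $x_{0}$ enters $(g(t),h(t))$. Then $a,b$ solve
\[
\dot a=\sigma_{1}(t)-d_{1}a+a(1-a-kb),\qquad \dot b=d_{2}+\sigma_{2}(t)-d_{2}b+\gamma b(1-b-ha),
\]
with $\sigma_{1}(t)=d_{1}\int_{g(t)}^{h(t)}J_{1}(x_{0}-y)u(t,y)\rd y$ and $d_{2}+\sigma_{2}(t)=d_{2}\int_{\R}J_{2}(x_{0}-y)v(t,y)\rd y$; by \eqref{vanishing} (and boundedness of $[g_{\infty},h_{\infty}]$) both $\sigma_{1}(t),\sigma_{2}(t)\to 0$, uniformly in $x_{0}\in[g_{\infty},h_{\infty}]$. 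So $(a,b)$ is an asymptotically autonomous planar system whose limiting system is
\begin{equation*}
\dot a=a(1-d_{1}-a-kb),\qquad \dot b=d_{2}(1-b)+\gamma b(1-b-ha).\tag{$\ast$}
\end{equation*}
Its equilibria in $\{a\ge 0,\,b\ge 0\}$ are $(0,1)$ together with the points $(k\theta-\td d_{1},\,1-\theta)$ for each root $\theta\in(0,1)$ of $F$: on the branch $a=1-d_{1}-kb$ of the first nullcline one computes $\dot b=-F(1-b)$, and since $\td d_{1}>0$ by Theorem~\ref{th1.1} we have $F(0)=-\td d_{1}\gamma h<0$ and $F(1)=-d_{2}<0$, so such roots exist only when $hk>1$, in which case there are two (with multiplicity), $x_{*}\le x^{**}$. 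A direct computation gives: the Jacobian of $(\ast)$ at $(0,1)$ is lower triangular with negative diagonal $(-\td d_{1},-(d_{2}+\gamma))$, so $(0,1)$ is always a hyperbolic sink; at the equilibrium over a root $\theta$, $\det J=-a\,F'(\theta)$, which is $<0$ at $x_{*}$ (where $F'>0$) and $>0$, with negative trace, at $x^{**}$, so $(kx_{*}-\td d_{1},1-x_{*})$ is a saddle and $(kx^{**}-\td d_{1},1-x^{**})$ a sink (a saddle-node when $x_{*}=x^{**}$). Moreover $(\ast)$ is competitive on the convex region $\{a>0,b>0\}$, which no orbit with $b>0$ ever leaves, so by the theory of planar competitive systems $(\ast)$ has no periodic orbit and every bounded forward orbit converges to an equilibrium. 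Finally, the ODE comparisons above (now with $d_{1}<1$) give, uniformly in $x$, $\limsup_{t}u(t,x)\le 1-d_{1}$, $\liminf_{t}v(t,x)\ge z_{-}$ and $\limsup_{t}v(t,x)\le 1$ for some constant $z_{-}>0$; hence for large $t$ the pair $(u(t,x),v(t,x))$ lies, uniformly in $x$, in a fixed compact set $\mathcal K\subset\{a\ge 0,b>0\}$ contained in a positively invariant box for $(\ast)$ and containing all the equilibria listed.

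If $(\gamma,h,k,d_{1},d_{2})\in\Theta_{1}$, then $F\ne 0$ (hence $F<0$) on $[0,1]$, so $(\ast)$ has no interior equilibrium and $(0,1)$ is the global attractor of $(\ast)$ on $\mathcal K$; the standard asymptotically autonomous argument---uniform asymptotic stability of the hyperbolic sink $(0,1)$, a uniform entry time of all autonomous orbits from $\mathcal K$ into any fixed neighbourhood of $(0,1)$, and uniform smallness of $\sigma_{1},\sigma_{2}$---then yields $\sup_{x}u(t,x)\to 0$ and $v(t,x)\to 1$ locally uniformly, i.e.\ \eqref{3.4}. If $(\gamma,h,k,d_{1},d_{2})\in\Theta_{2}$, then $hk>1$ and $\mathcal K$ contains the three equilibria $(0,1)$, the saddle $P_{*}:=(kx_{*}-\td d_{1},1-x_{*})$ and the sink $P^{**}:=(kx^{**}-\td d_{1},1-x^{**})$ (only $(0,1)$ and the saddle-node when $x_{*}=x^{**}$); since every forward orbit in the positively invariant box converges to one of these, $\mathcal K=B_{1}\sqcup B_{2}\sqcup S$, where $B_{1},B_{2}$ are the open basins of $(0,1)$ and $P^{**}$ and $S$ (relatively closed, with empty interior, containing the stable manifold of the saddle) consists of the points converging to $P_{*}$. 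For each $x$, $(u(t,x),v(t,x))$ is a uniformly small perturbation of a trajectory of $(\ast)$ started in $\mathcal K$; combining continuity of $u(t,\cdot),v(t,\cdot)$ in $x$ with the same uniform-stability reasoning shows that $\{x:(u(t,x),v(t,x))\to P^{**}\}$ is open, and it is a null set because $v(t,\cdot)\to 1$ in $L^{1}_{\rm loc}(\R)$ while the second coordinate of $P^{**}$ is $1-x^{**}\ne 1$; an open null set is empty. Likewise $\Omega:=\{x\in(g_{\infty},h_{\infty}):(u(t,x),v(t,x))\to(0,1)\}$ is open, so its complement $(g_{\infty},h_{\infty})\setminus\Omega=\{x:(u(t,x),v(t,x))\to P_{*}\}$ is relatively closed and, since $1-x_{*}\ne 1$, has measure $0$; thus $|\Omega|=h_{\infty}-g_{\infty}$. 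If $\Omega=(g_{\infty},h_{\infty})$ we again obtain \eqref{3.4} (together with the behaviour outside $(g_{\infty},h_{\infty})$ from \eqref{vanishing}); otherwise \eqref{Omega} holds, the convergence on $\Omega$ being locally uniform by the same uniform-stability argument applied on compact subsets of $B_{1}$. When $x_{*}=x^{**}$ the attracting sector of the saddle-node is an open set of points converging to it, hence again excluded by the open/null-set dichotomy, so only its thin stable curve enters the exceptional set and the argument is unchanged.

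The principal obstacle is precisely this passage---uniform in the spatial variable $x$---from the $L^{1}$-type convergence and the boundedness furnished by Theorem~\ref{th1.1} to the pointwise asymptotics of the genuine, infinite-dimensional, non-compact solution: one must show that a hyperbolic equilibrium of the planar limit system $(\ast)$ attracts at a rate and within a time uniform in both the starting point (ranging over a fixed compact set) and the vanishing perturbation, and then transport ``membership of a basin'' from one $x$ to its neighbours through continuity of $u,v$ in $x$, while simultaneously excluding, via the open/null-set dichotomy forced by \eqref{vanishing}, every equilibrium other than $(0,1)$ and the $x_{*}$-saddle. Subsidiary technical points are verifying the competitiveness and invariance needed to invoke the theory of planar competitive systems for $(\ast)$, and treating separately the non-hyperbolic saddle-node that occurs when $F$ has a double root in $(0,1)$.
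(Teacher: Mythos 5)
Your strategy---reading \eqref{KnK1.2} at each fixed $x$ as an asymptotically autonomous perturbation of the planar system $(\ast)$, classifying the equilibria of $(\ast)$ through the roots of $F$, and then excluding the interior sink by an open/null-set dichotomy driven by the $L^1$-convergence of $v$ to $1$---is genuinely different from the paper's. The paper never invokes asymptotically autonomous or competitive-systems theory: it constructs an explicit nested family of invariant regions $A_\sigma$ in the $(u,1-v)$-plane, shows the perturbed vector field points strictly inward on $\partial A_\sigma$, and reaches a contradiction by integrating $u_t$ and $\tilde v_t$ over the time sets on which the trajectory lies in the pieces $A^{\pm},\Gamma$ of $A_{\sigma_*}\setminus\overline A_{\sigma_0}$; in the $\Theta_2$ case it uses Egorov's theorem where you use the basin/measure argument. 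Your equilibrium bookkeeping ($(0,1)$ a hyperbolic sink, the smallest root $x_*$ of $F$ a saddle, the larger root a sink) is consistent with the paper's function $F$, and the measure-theoretic exclusion of the interior sink is an attractive alternative to Egorov.

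There are, however, genuine gaps. The central one: everything after the equilibrium classification presupposes that for each fixed $x$ the nonautonomous trajectory $(u(t,x),v(t,x))$ converges to \emph{some} equilibrium of $(\ast)$, and you never establish this. The decomposition $\mathcal K=B_1\sqcup B_2\sqcup S$ concerns the autonomous flow only, and the assertion that the true trajectory ``is a uniformly small perturbation of a trajectory of $(\ast)$ started in $\mathcal K$'' is valid only on finite time horizons---over infinite time the perturbed orbit can drift across the stable manifold of the saddle, so its position relative to $B_1,B_2,S$ at any fixed late time decides nothing. What is needed is Thieme's Poincar\'e--Bendixson trichotomy for asymptotically autonomous planar systems, plus a verification that the limit system admits no periodic orbits (competitiveness gives this) and no cyclic chains of equilibria (this requires an argument: sinks have no unstable manifolds and the saddle's unstable separatrices cannot return, by eventual monotonicity of planar competitive flows). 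Moreover, in the double-root case the interior equilibrium is a non-hyperbolic saddle-node, for which your ``uniform asymptotic stability under persistent perturbations'' reasoning does not apply as stated; the paper's invariant-region construction sidesteps hyperbolicity entirely.

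A second gap: when $\Omega=(g_\infty,h_\infty)$ you conclude \eqref{3.4}, but \eqref{3.4} requires $\max_{x\in[g(t),h(t)]}u(t,x)\to0$, and locally uniform convergence on the \emph{open} interval does not control $u$ near the endpoints $g_\infty,h_\infty$ (note $h(t)\uparrow h_\infty$). The paper's Step 2 of Lemma~\ref{lem:Theta2} handles exactly this by taking sequences $x_n\to x^*\in[g_\infty,h_\infty]$, using $u(t,\cdot)\equiv0$ at the endpoints together with \eqref{3.14} to place $(u(T_0,x^*),\tilde v(T_0,x^*))$ inside an invariant region at a fixed finite time and then spreading to a neighbourhood by continuity; you would need an analogous endpoint argument. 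Smaller omissions: you must check $kx_*-\tilde d_1>0$ (otherwise the root $x_*$ does not yield an equilibrium with nonnegative first coordinate; the paper proves this at the start of Lemma~\ref{lem:Theta2}), and the positively invariant compact region $\mathcal K$ and the total-stability lemma for the sink, on which your uniform-entry-time argument rests, should be exhibited rather than asserted.
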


\begin{remark}
We do not know whether \eqref{Omega} can actually happen, and conjecture that it never happens.
\end{remark}

For the case  $h_\yy-g_\yy=\yy$, we will prove the following result.

\begin{theorem}\label{th1.4}
Assume that $(\mathbf{J})$  holds and $(u, v, g, h)$ is the unique solution of \eqref{KnK1.2}.
	If  $h_\yy-g_\yy=\yy$ and $k<1$, then   $h_\infty = \infty,$ $g_\infty = -\infty$
	and
	\[
	\lim_{t\to\infty}(u(t,x), v(t,x))=\begin{cases} (1,0) &\mbox{ if } h\geq 1,\\
	(\frac{1-k}{1-hk},\frac{1-h}{1-hk}) &\mbox{ if } h<1,
	\end{cases}
	\]
	where the convergence is locally uniform for $x\in\R$.
\end{theorem}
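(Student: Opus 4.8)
My plan is to first establish that both free boundaries are unbounded ($h_\yy=\yy$ and $g_\yy=-\yy$), then obtain the convergence by a monotone iteration between the two equations, and finally read off the limit from the ODE dynamics recalled in the Introduction. Two preliminary facts will be used repeatedly, both following by comparison with the logistic ODE (a spatial constant is a supersolution of the nonlocal diffusion operator on any interval, and solves $\mathcal L_\R[\cdot]=0$ on $\R$): $\limsup_{\ttt}u(t,x)\le1$ and $\limsup_{\ttt}v(t,x)\le1$, uniformly in $x$; in particular $u$ and $v$ are bounded.

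The hard part is showing that both free boundaries become unbounded, so I would do that first. Suppose, for contradiction, that $g_\yy=:g_*>-\yy$ (the case $h_\yy<\yy$ is symmetric); since $h_\yy-g_\yy=\yy$ this forces $h_\yy=\yy$. On one hand, $\int_0^\yy|g'(t)|\,\rd t=g(0)-g_*<\yy$; fixing $\rho>0$ with $J_1\ge\delta_0>0$ on $[-2\rho,2\rho]$ (possible since $J_1(0)>0$), for $t$ large the integrand defining $-g'(t)$ controls $\mu\delta_0\rho\int_{g(t)}^{g(t)+\rho}u(t,x)\,\rd x$, so $\int_0^\yy\!\!\int_{g_*}^{g_*+\rho/2}u(t,x)\,\rd x\,\rd t<\yy$; since $t\mapsto\int_{g_*}^{g_*+\rho/2}u(t,x)\,\rd x$ has bounded derivative (the right-hand side of the $u$-equation is bounded, and $u(t,g(t))=0$), it is uniformly continuous, whence $\int_{g_*}^{g_*+\rho/2}u(t,x)\,\rd x\to0$. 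On the other hand, fix $\epsilon\in(0,\tfrac{1-k}{k})$ so that $a:=1-k(1+\epsilon)>0$, and choose $T'$ so large that the fixed compact set $K:=[g_*+\rho/4,\,g_*+\rho/2]$ lies in $I_{T'}:=(g(T'),h(T'))$ with $\lambda_p(\mathcal L_{I_{T'}})+a>0$ (possible since $|I_{T'}|=h(T')-g(T')\to\yy$, by Proposition B(iii)). For $t\ge T'$, using $(g(t),h(t))\supset I_{T'}$, $u\ge0$ and $v\le1+\epsilon$,
\[
u_t\ \ge\ d_1\!\int_{I_{T'}}\!J_1(x-y)u(t,y)\,\rd y-d_1u+u(a-u),\qquad x\in I_{T'};
\]
comparison with the (persistent) nonlocal Dirichlet problem on $I_{T'}$ with growth rate $a$ gives $\liminf_{\ttt}u(t,x)\ge W_{I_{T'}}(x)$ for $x\in I_{T'}$, with $W_{I_{T'}}$ its positive steady state, which is continuous and positive on the open interval $I_{T'}$, hence bounded below by some $m>0$ on $K$. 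Then $u(t,x)\ge m/2$ on $K$ for $t$ large, so $\int_{g_*}^{g_*+\rho/2}u(t,x)\,\rd x\ge m\rho/8>0$, contradicting the previous limit. Hence $g_\yy=-\yy$, and symmetrically $h_\yy=\yy$.

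Next I would run the iteration. With $h_\yy=\yy$ and $g_\yy=-\yy$, every $[-L,L]$ lies in $(g(t),h(t))$ for all large $t$, and I would produce sequences $\ol u_n\downarrow,\ \underline u_n\uparrow,\ \ol v_n\downarrow,\ \underline v_n\uparrow$ with $\limsup_{\ttt}u\le\ol u_n$, $\liminf_{\ttt}u\ge\underline u_n$ uniformly on compacts, and likewise for $v$, starting from $\ol u_1=\ol v_1=1$, $\underline u_0=\underline v_0=0$. For the upper bounds: given a lower bound $\underline v_n$ for $v$, the spatially constant solution of $\ol U'=\ol U(1-k\underline v_n-\ol U)$ (up to an $\epsilon$) is a supersolution of the $u$-equation on $(g(t),h(t))$, so $\limsup u\le(1-k\underline v_n)_+=:\ol u_{n+1}$. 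For the lower bounds: given an upper bound $\ol v_n$ for $v$, comparison with the nonlocal Dirichlet problem on a large interval $(-R,R)\supset[-L,L]$ with growth rate $(1-k\ol v_n)_+$ — whose positive steady state tends to the constant $(1-k\ol v_n)_+$ uniformly on $[-L,L]$ as $R\to\yy$, again by Proposition B(iii) — gives $\liminf u\ge(1-k\ol v_n)_+=:\underline u_{n+1}$ uniformly on $[-L,L]$. The same two devices applied to the $v$-equation (posed on all of $\R$; if $v_0\equiv0$ then $v\equiv0$ and the problem reduces to the single-species case, so we may assume $v_0\not\equiv0$, whence $v(t,\cdot)>0$ and the Dirichlet comparison can be started from nontrivial positive data) improve $\ol v_n,\underline v_n$ through $w\mapsto(1-hw)_+$. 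This is exactly the iteration associated with the ODE \eqref{ODE}.

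Finally I would identify the common limit. If $h\ge1$ it suffices to track $(\underline u_n,\ol v_n)$; their limits $(A,B)$ satisfy $A=1-kB$ and $B=\max\{0,1-hA\}$, and $B>0$ would give $B(1-hk)=1-h\le0$, which contradicts $A\ge1-k>0$ and $B>0$ (whether $hk<1$, forcing $B\le0$, or $hk>1$, forcing $A=\tfrac{1-k}{1-hk}<0$); hence $B=0$, $A=1$, i.e. $u\to1$, $v\to0$ locally uniformly. If $h<1$, then $hk<1$, and subtracting the fixed-point relations for $\ol u_\yy,\underline u_\yy,\ol v_\yy,\underline v_\yy$ yields $(\ol u_\yy-\underline u_\yy)(1-hk)=0$, so the sandwich closes and the common value is the unique positive equilibrium $R_*=(\tfrac{1-k}{1-hk},\tfrac{1-h}{1-hk})$; the convergence is locally uniform since the lower bounds hold uniformly on compacts and the upper bounds are spatially uniform. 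The main obstacle is the step that both free boundaries are unbounded: for nonlocal diffusion one cannot rule out a priori that $u$ invades successfully on one side while the other free boundary stays bounded, and the contradiction above closes only because $k<1$ makes $a=1-k\limsup v>0$ — this positivity is precisely what lets the nonlocal Dirichlet subsolution keep $u$ uniformly positive right up to the hypothetical finite boundary, against the decay of $u$ there forced by $\int_0^\yy|g'|<\yy$. A secondary difficulty is closing the iteration when $h<1$, which rests on $hk<1$ together with the ODE dynamics.
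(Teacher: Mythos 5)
Your proposal is correct and follows essentially the same route as the paper: you rule out a finite front by playing the decay of $\int u$ near the hypothetical finite boundary (forced by $\int_0^\infty|g'|<\infty$, which is a mild variant of the paper's Remark \ref{remark3.6}) against a persistence lower bound from the nonlocal Dirichlet problem on a long fixed interval with growth rate $1-k(1+\epsilon)>0$, and then run the standard interleaved monotone iteration, identifying the limits via the fixed-point relations rather than the paper's explicit geometric series. The only substantive caveat (which you already flag, and which the paper glosses over) is that the $h<1$ conclusion requires $v_0\not\equiv 0$ so that the lower iteration for $v$ can be started from nontrivial data.
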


The assumption $k<1$ in Theorem \ref{th1.4} cannot be removed. In a separate following up work \cite{dns-part2}, we will show that, when $k\geq 1$, it is possible to have $h_\infty=\infty$ while $g_\infty$ is finite.  Since \eqref{KnK1.2} generates a monotone dynamical system, it is possible to obtain sharp criteria for $h_\infty-g_\infty=\infty$ or $h_\infty-g_\infty<\infty$ to happen; this will also be carried out in this following up paper.

For nonlocal diffusion models with free boundary, compared with the corresponding models with local diffusion, a new phenomena occurs on the asymptotic spreading speed, namely accelerated spreading may happen. For the Fisher-KPP single species model, and for certain cooperative models, such a phenomena was investigated recently in \cite{DLZ-2021-JMPA, dn-jde, dn-jems, dn-ma, dnw-non}. We leave the study of this behaviour of the competition system here to future work.

There are extensive recent works on competition systems with nonlocal diffusion over a fixed bounded domain or over the entire Euclidean space. Since no free boundaries appear
in these situations, compared to our work here, usually significantly different techniques are used; indeed, the technical difficulties here do not appear in these problems.
For works on a bounded domain, we mention as examples  \cite{Lf-jmaa-2014, Bxl-jde-2015, Bxl-cvpde-2018, Bxl-dcds-2020} and the references therein.
For works on nonlocal competition systems over the entire space, an incomplete sample includes \cite{Swx-cpaa-2012, Lwt-dcds-2015, Zgb-cvpde-2020,   Wcf-jde-2019, Qsx-jde-2024} and the references therein. In \cite{Swx-cpaa-2012}, the authors obtained conditions for coexistence and extinction of the  species, and also considered the system in higher dimensions. 
In \cite{Lwt-dcds-2015}, the authors  constructed entire solutions behaving like  two monotone traveling-wave solutions moving toward each other from the ancient time $-\infty$,
giving rise to the phenomena of successful invasion of a species with speed $c_1$ from $x=-\infty$ and with speed $c_2$ from $x=+\infty$. 
In  \cite{Wcf-jde-2019} and \cite{Qsx-jde-2024},   the propagation behaviour under a shifting environment was investigated.
In \cite{Zgb-cvpde-2020},  a strong competition system was considered, where interesting results on the stability of bistable traveling waves and the long-time propagation behaviour of the system were obtained.

 The rest of this paper is organised as follows. In Sections 2 and 3, we prove Theorems \ref{th1.1} and \ref{th1.2}, respectively. The proof of Theorem \ref{th1.4} is given in Section 4.

\section{Proof of Theorem \ref{th1.1}}  
We prove Theorem \ref{th1.1} 
by a sequence of lemmas.

\begin{lemma}\label{lem:2.2}
If  $h_\infty - g_\infty < \infty$, then \eqref{3.3} holds. 
\end{lemma}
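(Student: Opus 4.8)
The plan is to prove the two assertions of \eqref{3.3} in sequence. Assume $h_\infty-g_\infty<\infty$; then $g_\infty$ and $h_\infty$ are both finite and $h_\infty-g_\infty\in(0,\infty)$. I would first establish the eigenvalue bound $\lambda_p(\mathcal L_{(g_\infty,h_\infty)})\le k-1$, after which $d_1>1-k$ comes for free: by Proposition~B(i),(ii),(iv) one has $\lambda_p(\mathcal L_{(g_\infty,h_\infty)})=\lambda_p(\mathcal L_{(0,\,h_\infty-g_\infty)})>\lim_{l\to0}\lambda_p(\mathcal L_{(0,l)})=-d_1$, so $-d_1<k-1$. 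The eigenvalue bound itself I would prove by contradiction: suppose $\lambda_p(\mathcal L_{(g_\infty,h_\infty)})>k-1$; the goal is to show this forces $h(t)\to\infty$, contradicting $h_\infty<\infty$.

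To set up the contradiction, first record the elementary a priori bound $\limsup_{t\to\infty}\sup_{x\in\R}v(t,x)\le1$, obtained by comparing $v$ with the spatially constant supersolution $V(t)$ solving $V'=\gamma V(1-V)$, $V(0)=\max\{1,\sup_\R v_0\}$ (the nonlocal term drops out because $V$ is constant in $x$, and $\gamma V(1-V)\ge\gamma V(1-V-hu)$ since $u\ge0$). Fix $\epsilon>0$ small enough that $\lambda_p(\mathcal L_{(g_\infty,h_\infty)})>k(1+\epsilon)-1$, and take $T_\epsilon$ with $v(t,x)\le1+\epsilon$ for all $t\ge T_\epsilon$, $x\in\R$. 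Since $(g(t),h(t))$ increases to $(g_\infty,h_\infty)$, Proposition~B(i),(ii) give $\lambda_p(\mathcal L_{(g(t),h(t))})\to\lambda_p(\mathcal L_{(g_\infty,h_\infty)})$, so I can fix $T\ge T_\epsilon$ so large that the frozen interval $\Omega^*:=(g(T),h(T))$ satisfies both $\lambda_p(\mathcal L_{\Omega^*})>k(1+\epsilon)-1$ and $h(T)>h_\infty-\rho_1/8$, where $\rho_1\in(0,\min\{2h_0,\,h_\infty-g_\infty\})$ is chosen small enough that $c_1:=\int_{\rho_1/4}^\infty J_1(z)\rd z>0$; such $\rho_1$ exists since $\int_0^\rho J_1\to0$ as $\rho\to0$ while $\int_\R J_1=1$. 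This is the only place the kernel enters, and crucially it needs only $(\mathbf J)$, not \eqref{J>0}.

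Next, for $t>T$ and $x\in\Omega^*\subset(g(t),h(t))$, discarding the part of the kernel integral outside $\Omega^*$ (legitimate as $u\ge0$) and using $v\le1+\epsilon$, one gets $u_t\ge\mathcal L_{\Omega^*}[u]+u(1-u-k(1+\epsilon))$ on $\Omega^*$. Let $\varphi^*$ be the principal eigenfunction of $\mathcal L_{\Omega^*}$, normalised so that $\max_{\overline{\Omega^*}}\varphi^*=1$; since $J_1(0)>0$ and $J_1$ is continuous, $\varphi^*>0$ on the closed interval $\overline{\Omega^*}$. Because $u(t,\cdot)$ is bounded below by a positive constant on the compact set $\overline{\Omega^*}$ for each $t>T$ (strong positivity of $u$ together with strict monotonicity of the free boundaries), I can pick $\eta>0$ small so that $\eta\le\kappa:=\lambda_p(\mathcal L_{\Omega^*})+1-k(1+\epsilon)>0$ and $\eta\varphi^*\le u(T{+}1,\cdot)$ on $\Omega^*$; then the time-independent function $\eta\varphi^*$ is a subsolution of the above inequality, and the comparison principle yields $u(t,x)\ge\eta\varphi^*(x)\ge\delta:=\eta\min_{\overline{\Omega^*}}\varphi^*>0$ for all $t\ge T{+}1$, $x\in\overline{\Omega^*}$. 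Finally, on the fixed subinterval $I^+:=(h_\infty-\rho_1/4,\,h(T))\subset\overline{\Omega^*}$ (nonempty, of length $>\rho_1/8$), for every $t\ge T{+}1$ and $x\in I^+$ we have $0<h(t)-x<\rho_1/4$, hence $\int_{h(t)}^\infty J_1(x-y)\rd y=\int_{h(t)-x}^\infty J_1(z)\rd z\ge c_1$; therefore $h'(t)=\mu\int_{g(t)}^{h(t)}u(t,x)\int_{h(t)}^\infty J_1(x-y)\rd y\,\rd x\ge\mu\delta c_1|I^+|>0$ for all $t\ge T{+}1$, so $h(t)\to\infty$ --- a contradiction. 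This proves $\lambda_p(\mathcal L_{(g_\infty,h_\infty)})\le k-1$, and with it \eqref{3.3}.

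The main obstacle is the uniform-in-time persistence estimate $u(t,\cdot)\ge\delta>0$ on a fixed interval that reaches arbitrarily close to $h_\infty$. Making it work requires simultaneously (a) freezing the domain at a late time $T$ while keeping $\lambda_p(\mathcal L_{\Omega^*})$ strictly above the threshold $k(1+\epsilon)-1$ (so the zero state of the linearisation on $\Omega^*$ is unstable), (b) the a priori ceiling $v\le1+\epsilon$, and (c) the fact that the nonlocal ``Dirichlet'' principal eigenfunction is strictly positive up to the boundary --- this last point is what lets the constant multiple $\eta\varphi^*$ act as a genuine subsolution bounded below by a positive constant. Turning the resulting non-decay of $u$ into a uniform positive lower bound on $h'(t)$ is then routine and uses only hypothesis $(\mathbf J)$; the genuinely delicate arguments that dispense with \eqref{J>0} are reserved for the later lemmas.
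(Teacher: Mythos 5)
Your proof is correct and follows essentially the same strategy as the paper's: argue by contradiction from $\lambda_p(\mathcal L_{(g_\infty,h_\infty)})>k-1$, use the ceiling $v\le 1+\epsilon$ to get a time-uniform positive lower bound for $u$ on a fixed interval reaching arbitrarily close to $h_\infty$, and convert this into a positive lower bound for $h'(t)$, contradicting $h_\infty<\infty$ (with $d_1>1-k$ then following from Proposition B). The only difference is in implementation: you obtain the persistence bound from an explicit eigenfunction subsolution $\eta\varphi^*$ on the frozen domain $(g(T),h(T))$, whereas the paper compares with the auxiliary problem on $(g_\infty+\epsilon,h_\infty-\epsilon)$ and cites the convergence to its positive steady state from \cite{Cao-2019-JFA}; both routes use only $(\mathbf J)$.
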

\begin{proof}
	To prove the second inequality in \eqref{3.3}, we argue indirectly: Assume, on the contrary, that $\lambda_p(\mathcal{L}_{(g_\infty, h_\infty)})>k-1.$ Then $\lambda_p(\mathcal{L}_{(g_\infty, h_\infty)}>  k(1 + \epsilon) -1$ for sufficiently small $\epsilon > 0,$ say $\epsilon\in(0, \epsilon_1).$ For such $\epsilon,$ we can find $T_\epsilon > 0$ such that 
	\begin{align*}
		&h(t) > h_\infty - \epsilon, ~g(t) < g_\infty + \epsilon \ \ {\rm ~for~} t > T_\epsilon,\\
		&v(t, x) < 1 + \epsilon\ \ {\rm ~for~} t > T_\epsilon,\ x\in \R.
	\end{align*}
	Let $w_\epsilon$ be the unique solution of the auxiliary problem
	\begin{align}
		\begin{cases}
			\dd	{w}_t = d_1\int_{g_\infty  + \epsilon}^{h_\infty - \epsilon}J_1(x - y)w(t, y)\rd y - d_1w + w(1 - w -k(1 + \epsilon)), &
			t > T_\epsilon, ~x\in[g_\infty + \epsilon, h_\infty - \epsilon],\\ 
			w(T_\epsilon, x) = u(T_\epsilon, x), & x\in[g_\infty + \epsilon, h_\infty - \epsilon].
			\nonumber
		\end{cases}
	\end{align}
	Since $\lambda_p(\mathcal{L}_{(g_\infty, h_\infty)} )> k(1 + \epsilon) -1$, from   \cite[Proposition 3.5]{Cao-2019-JFA}, it follows that $w_\epsilon$ converges to the unique positive steady-state $W_\epsilon(x)$ uniformly in $[g_\infty + \epsilon, h_\infty - \epsilon]$ as $t \rightarrow \infty.$ Moreover, a simple comparison argument yields that $u(t, x) \geq w_\epsilon(t, x)$ for $t > T_\epsilon,$ and $x\in [g_\infty + \epsilon, h_\infty - \epsilon].$ Therefore, there exists $T_{\epsilon_1} > T_\epsilon$ such that
	$$u(t, x) \geq \frac{1}{2}W_\epsilon(x) > 0, ~\forall ~t \geq T_{\epsilon_1}, ~x\in[g_\infty + \epsilon, h_\infty - \epsilon].$$
	It follows that, for $t\geq T_{\epsilon_1}$, 
	\begin{align*}
	h'(t) &= \mu\int_{g(t)}^{h(t)}\int_{h(t)}^{\infty}J_1(x - y)u(t, x)\rd y\rd x\\
	&\geq \mu \int_{h_\infty-2\epsilon}^{h_\infty-\epsilon}\int_{h_\infty}^{h_\infty+\epsilon} J_1(x-y)\frac 12 W_\epsilon(x) dy dx\\
	&\geq \mu \int_{h_\infty-2\epsilon}^{h_\infty-\epsilon}\int_{h_\infty}^{h_\infty+\epsilon} \min_{z\in [-3\epsilon, -\epsilon]}J_1(z)\frac 12 \min_{z\in [0, h_\infty]}W_\epsilon(z) dy dx
	\\
	&=\mu \epsilon^2 \min_{z\in [-3\epsilon, -\epsilon]}J_1(z)\frac 12 \min_{z\in [0, h_\infty]}W_\epsilon (z)=:\sigma_\epsilon>0
	\end{align*}
	provided that $\epsilon>0$ is sufficiently small (recall that $J_1(0)>0$). But this implies $h_\infty=\infty$, a contradiction.
	 Thus $\lambda_p(\mathcal{L}_{(g_\infty, h_\infty)})\leq k-1$ holds true. This proves the first inequality in \eqref{3.3}.
	 
The first inequality in \eqref{3.3} is a consequence of the second. Indeed,  by  Proposition B, we have $-d_1<\lambda_p(\mathcal{L}_{(g_\infty, h_\infty)} )\leq k-1$, i.e., $d_1>1-k$.  The proof is finished. 
\end{proof}

Our analysis below will make use of Barbalat's Lemma \cite{Ba}, which is recalled below and can be proved by elementary calculus directly.

\begin{lemma}[Barbalat's Lemma]\label{lemmaba} Suppose that $\psi:[0,\yy)\rightarrow \mathbb{R}$ is uniformly continuous
	and that $\dd\lim\limits_{t\to\yy}\int_{0}^t \psi(s)\rd s\in\R$ exists. Then $\dd\lim_{t\to\yy} \psi(t)=0$.
\end{lemma}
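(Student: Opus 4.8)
The plan is to argue by contradiction, converting a pointwise lower bound on $|\psi|$ into an integral lower bound over intervals of a fixed positive length via uniform continuity, and then to contradict the Cauchy criterion for the existence of $\lim_{t\to\yy}\int_0^t\psi(s)\,\rd s$.

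Suppose the conclusion fails. Then there exist $\varepsilon>0$ and a sequence $t_n\to\yy$ with $|\psi(t_n)|\geq\varepsilon$ for all $n$. First I would use the uniform continuity of $\psi$ to fix $\delta\in(0,1)$ such that $|\psi(t)-\psi(s)|<\varepsilon/2$ whenever $|t-s|\leq\delta$; only after $\delta$ is fixed would I thin out $(t_n)$ so that the closed intervals $I_n:=[t_n,t_n+\delta]$ are pairwise disjoint (e.g. by requiring $t_{n+1}>t_n+1$). On each $I_n$ the choice of $\delta$ gives $|\psi(t)|\geq|\psi(t_n)|-\varepsilon/2\geq\varepsilon/2>0$; in particular $\psi$ has no zero on $I_n$, hence by continuity a constant sign there, so
\[
\Big|\int_{t_n}^{t_n+\delta}\psi(s)\,\rd s\Big|=\int_{t_n}^{t_n+\delta}|\psi(s)|\,\rd s\geq\frac{\varepsilon\delta}{2}\qquad\text{for all }n.
\]

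On the other hand, setting $\Phi(t):=\int_0^t\psi(s)\,\rd s$, the assumption that $\Phi$ has a finite limit at $\yy$ means $\Phi$ satisfies the Cauchy condition: there is $T>0$ with $|\Phi(b)-\Phi(a)|<\varepsilon\delta/4$ for all $b>a\geq T$. Taking $n$ large enough that $t_n\geq T$ yields $\big|\int_{t_n}^{t_n+\delta}\psi\big|=|\Phi(t_n+\delta)-\Phi(t_n)|<\varepsilon\delta/4$, contradicting the displayed bound. Hence $\psi(t)\to0$ as $t\to\yy$.

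The argument is entirely elementary and I do not anticipate a genuine obstacle; the only point meriting care is the order of quantifiers, namely that $\delta$ must be extracted from uniform continuity before the sequence $(t_n)$ is thinned out (to disjointness and to $t_n\geq T$ eventually), and that the hypothesis is used precisely in the form of the Cauchy criterion for $\Phi$ at infinity rather than merely as convergence.
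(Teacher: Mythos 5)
Your argument is correct and is the standard elementary proof of Barbalat's Lemma; the paper itself gives no proof, merely citing \cite{Ba} and remarking that the result "can be proved by elementary calculus directly," so there is nothing to compare against beyond noting that your contradiction-plus-Cauchy-criterion argument is exactly the intended elementary route. One minor remark: the thinning of $(t_n)$ to make the intervals $[t_n,t_n+\delta]$ pairwise disjoint is superfluous, since the contradiction is reached on a single such interval with $t_n\geq T$; otherwise the quantifier order and the use of the hypothesis are handled correctly.
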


\begin{lemma}\label{lemma3.5}  If  $h_\infty - g_\infty < \infty$, then
	\begin{align}\label{3.5}
		\lim\limits_{t \rightarrow \infty}\int_{\R}u(t, y)\rd y = 0.
	\end{align}
\end{lemma}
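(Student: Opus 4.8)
The plan is to start from an integral identity for the total mass $E(t):=\int_{\R}u(t,x)\,\rd x$. Integrating the $u$-equation in \eqref{KnK1.2} over $(g(t),h(t))$, using $u(t,g(t))=u(t,h(t))=0$ to kill the boundary terms, applying Fubini's theorem and the evenness of $J_1$ to rewrite the double integral $\int_{g(t)}^{h(t)}\int_{g(t)}^{h(t)}J_1(x-y)u(t,y)\,\rd y\,\rd x$, and observing that the resulting escape weight $\int_{-\infty}^{g(t)}J_1(x-y)\,\rd x+\int_{h(t)}^{\infty}J_1(x-y)\,\rd x$ is exactly what enters the formulas for $h'(t)$ and $g'(t)$, one obtains
\begin{equation}\label{eq:mass}
E'(t)=-\frac{d_1}{\mu}\bigl(h'(t)-g'(t)\bigr)+\int_{g(t)}^{h(t)}u(t,x)\bigl(1-u(t,x)-kv(t,x)\bigr)\,\rd x .
\end{equation}
Since $u$ and $v$ are bounded (standard comparison), $E(t)$ is bounded; and since $h_\infty-g_\infty<\infty$, the function $\ell(t):=h(t)-g(t)$ increases to $\ell_\infty<\infty$, so $\int_0^{\infty}\bigl(h'(s)-g'(s)\bigr)\,\rd s=\ell_\infty-2h_0<\infty$. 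Using that $h'$ and $g'$ are uniformly continuous on $[0,\infty)$ (a consequence of the regularity in the well-posedness theory, or of a direct bound on $h''$ and $g''$), Barbalat's Lemma \ref{lemmaba} gives $h'(t)-g'(t)\to0$, and since $h'\ge0\ge g'$ this forces $h'(t)\to0$ and $g'(t)\to0$. Fixing $r_0>0$ and $m_0>0$ with $J_1\ge m_0$ on $[-r_0,r_0]$ (possible as $J_1(0)>0$), the identity $h'(t)=\mu\int_{g(t)}^{h(t)}u(t,x)\bigl(\int_{-\infty}^{x-h(t)}J_1(z)\,\rd z\bigr)\rd x$ then yields $h'(t)\ge c_0\int_{h(t)-r_0/4}^{h(t)}u(t,x)\,\rd x$ for some $c_0>0$, so $\int_{h(t)-r_0/4}^{h(t)}u(t,x)\,\rd x\to0$, and similarly $\int_{g(t)}^{g(t)+r_0/4}u(t,x)\,\rd x\to0$.

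Suppose now, for contradiction, that $\limsup_{\ttt}E(t)=c>0$, and pick $t_n\to\infty$ with $E(t_n)\to c$. The crucial step is to work with the local average of $u$: fix $L\in(0,r_0/4)$ and set $U(t,x):=\int_{x-L}^{x+L}u(t,y)\,\rd y$. A short computation (Fubini and the evenness of $J_1$ again) shows that $U$ satisfies, \emph{on all of} $\R$,
\begin{equation}\label{eq:Ueq}
\prt U=d_1\int_{\R}J_1(x-y)U(t,y)\,\rd y-d_1U+R(t,x),\qquad R(t,x):=\int_{x-L}^{x+L}u(t,y)\bigl(1-u-kv\bigr)\,\rd y,
\end{equation}
with $0\le U$, $|R(t,x)|\le C\,U(t,x)$ for $C:=1+\|u\|_\infty+k\|v\|_\infty$, $\int_{\R}U(t,x)\,\rd x=2L\,E(t)$, and $\mathrm{supp}\,U(t,\cdot)\subset[g_\infty-L,h_\infty+L]=:I$. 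Consider the time translates $U_n(t,x):=U(t+t_n,x)$ and $R_n(t,x):=R(t+t_n,x)$. Since $U_n$ and $\prt U_n$ are uniformly bounded in $L^{\infty}$, along a subsequence one has $U_n\rightharpoonup U_\infty$ weak-$*$ in $L^{\infty}$, $U_n\to U_\infty$ in $C\bigl([-T,T];L^2(I)\bigr)$ with $L^2(I)$ carrying its weak topology (for every $T>0$), $R_n\rightharpoonup R_\infty$ weak-$*$, and $\int_{\R}J_1(x-y)U_n(t,y)\,\rd y\to\int_{\R}J_1(x-y)U_\infty(t,y)\,\rd y$ pointwise; passing to the limit in \eqref{eq:Ueq} gives $\prt U_\infty=d_1\int_{\R}J_1(x-y)U_\infty(t,y)\,\rd y-d_1U_\infty+R_\infty$ in the distributional sense, with $0\le U_\infty$, $|R_\infty|\le C\,U_\infty$ a.e., $U_\infty$ weakly continuous in $t$, and $\int_{I}U_\infty(0,x)\,\rd x=\lim_n 2L\,E(t_n)=2Lc>0$. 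Furthermore, combining $h(t+t_n)\to h_\infty$, $g(t+t_n)\to g_\infty$ with the near-boundary smallness from the first paragraph, one checks that $U_\infty(t,\cdot)\equiv0$ on $\R\setminus(g_\infty,h_\infty)$ for a.e.\ $t$.

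Finally, the vanishing is propagated inward: on the open set where $U_\infty=0$ we have $\prt U_\infty=0$ and (by $|R_\infty|\le CU_\infty$) $R_\infty=0$ a.e., so the limit equation forces $\int_{\R}J_1(x-y)U_\infty(t,y)\,\rd y=0$ there; since $J_1\ge m_0>0$ on $[-r_0,r_0]$ and $U_\infty\ge0$, this gives $\int_{x-r_0}^{x+r_0}U_\infty(t,y)\,\rd y=0$, i.e.\ $U_\infty$ vanishes a.e.\ on a strip of width $r_0$ reaching further inward; iterating finitely many times until the strip covers $I$ yields $U_\infty\equiv0$, contradicting $\int_{I}U_\infty(0,x)\,\rd x=2Lc>0$. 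Hence $c=0$, and as $E\ge0$ this is \eqref{3.5}. The hard part is that the nonlocal diffusion equation has no smoothing in $x$, so solutions have no spatial compactness and one cannot pass to the limit in the quadratic term $u^2$ (or in $uv$); replacing $u$ by $U$ is what resolves this, because $U$ still obeys a clean nonlocal diffusion equation on $\R$, keeps the $L^1$-mass of $u$ up to the factor $2L$, is Lipschitz in $t$ (so its time-translates are precompact, weakly in $L^2_x$ and locally uniformly in $t$), and—most importantly—has a reaction term $R$ \emph{dominated} by $C\,U$, which in the limit squeezes $R_\infty$ to $0$ precisely on the zero set of $U_\infty$; this is exactly what the inward propagation requires. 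The points needing care are that $|R_\infty|\le C\,U_\infty$ survives the weak-$*$ limit (order preservation against nonnegative test functions) and that $U_\infty$ is weakly continuous in $t$, so that $U_\infty=0$ a.e.\ forces $U_\infty(0,\cdot)\equiv0$.
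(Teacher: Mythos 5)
Your overall strategy is sound and, at its core, coincides with the paper's: both proofs hinge on the window average $U(t,x)=\int_{x-L}^{x+L}u(t,y)\,\rd y$, on deducing from $h'(t),g'(t)\to0$ that the mass of $u$ near the free boundaries vanishes, and on propagating this vanishing inward in finitely many steps of size comparable to the radius on which $J_1$ is bounded below. The paper executes the propagation by applying Barbalat's Lemma to $t\mapsto U_t(t,x_0)$ at one interior point at a time and reading off the vanishing of $\int J_1(x_0-z)u(t,z)\,\rd z$ directly from the equation; you instead take time translates, extract a limit $U_\infty$ and a limit equation, and propagate zeros of $U_\infty$. Both routes work, and yours has the minor advantage of packaging the reaction term once and for all through the domination $|R|\le CU$.

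There is, however, one step that is false as written and needs repair: the claim that $U$ satisfies
$\partial_t U=d_1\int_{\R}J_1(x-y)U(t,y)\,\rd y-d_1U+R$ \emph{on all of} $\R$. Since the $u$-equation holds only for $x\in(g(t),h(t))$ while $u\equiv0$ (hence $\partial_t u=0$) outside, the correct identity is
\begin{equation*}
\partial_t U(t,x)=d_1\int_{\R}J_1(x-y)U(t,y)\,\rd y-d_1U(t,x)+R(t,x)-D(t,x),\qquad
D(t,x):=d_1\!\!\int_{(x-L,x+L)\setminus(g(t),h(t))}\!\int_{g(t)}^{h(t)}\!J_1(y-z)u(t,z)\,\rd z\,\rd y,
\end{equation*}
and $D\ge0$ is not identically zero whenever the window $(x-L,x+L)$ exits $(g(t),h(t))$. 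This is not cosmetic: the resulting one-sided inequality $\partial_t U\le d_1\int J_1U-d_1U+R$ points the wrong way for your propagation step, and your propagation is initiated exactly at points on or outside $[g_\infty,h_\infty]$, where $D$ does not vanish. Fortunately the gap closes with facts you already have: by Fubini and the free boundary conditions,
$0\le D(t,x)\le \frac{d_1}{\mu}\bigl(h'(t)-g'(t)\bigr)\to0$ uniformly in $x$, so $D$ disappears in the limit and the limit equation for $U_\infty$ is exactly the one you use. (Alternatively, one can argue as the paper does and only write the identity for $x$ with $(x-L,x+L)\subset(g(t),h(t))$ for large $t$, starting the propagation from $x_0=h_\infty-3L/2$ where the near-boundary smallness already gives $U\to0$.) With this correction — and noting that $U$ is in fact uniformly Lipschitz in $(t,x)$, so the time translates converge locally uniformly and the weak-topology bookkeeping can be simplified — your argument is complete. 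The mass identity in your first paragraph is correct but is not actually used beyond motivating $h',g'\to0$, which you obtain anyway from the monotone convergence of $h-g$ plus Barbalat.
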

\begin{proof}
%By a similar argument as Lemma 3.1 in \cite{DY-2022-DCDS}, we obtain that $\lim\limits_{t \rightarrow \infty}g^\prime(t) = \lim\limits_{t \rightarrow \infty}h^\prime(t) = 0$. 
By the argument in the proof of Lemma 3.1 in \cite{DY-2022-DCDS},  we obtain that $\lim\limits_{t \rightarrow \infty}g^\prime(t) = \lim\limits_{t \rightarrow \infty}h^\prime(t) = 0$.
Due to $J_i(0)>0$,  there is small  $\epsilon > 0$ such that $\inf\limits_{x\in [-\epsilon,\epsilon]} J_1(x)>0$. Then for large $t>0$,
\begin{align*}
	h^\prime(t)& = \mu\int_{g(t)}^{h(t)}\int_{h(t)}^{\infty}J_1(x - y)u(t, x)\rd y \rd x\\
	&\geq \mu\int_{h(t)-\epsilon /2}^{h(t)}\int_{h(t)}^{h(t)+\epsilon /2}J_1(x - y)u(t, x)\rd y \rd x\\
	& = \mu\int_{h(t) - \epsilon /2}^{h(t)}\int^{x - h(t)}_{x - (h(t) + \epsilon /2)}J_1(z)u(t, x)\rd z\rd x\\
	&\geq \mu\frac{\epsilon}{2}\inf_{x\in [-\epsilon,\epsilon]} J_1(x)\int_{h(t)-\epsilon /2}^{h(t)}u(t, x)\rd x\geq 0,
\end{align*} 
which implies 
\begin{align}\label{3.6}
\lim_{t\to\yy}\int_{h(t)-\epsilon /2}^{h(t)}u(t, x)\rd x=0\ \ {\rm and\ hence}\ \  \lim_{t\to\yy}\int_{h_\yy-\epsilon /4}^{\infty}u(t, x)\rd x  = 0.
\end{align}
Similarly, using $\lim\limits_{t \rightarrow \infty}g^\prime(t)=0$ we deduce
\begin{align}\label{3.7}
\lim_{t\to\yy}\int_{g (t)}^{g (t)+\epsilon /2}u(t, x)\rd x=0\ \ {\rm and\ hence}\ \  \lim_{t\to\yy}\int_{-\infty}^{g_\yy+\epsilon /4}u(t, x)\rd x  = 0.
\end{align}

In the following, we show that 
\begin{align}\label{3.8}
\mbox{ $\lim_{t\to \yy} U(t,x)=0$ for every $x\in \R$,}
\end{align}
where
\[
	U(t,x):=\int_{x-L}^{x+L}u(t,y)\rd y,\ L= \epsilon/8.
\]

Due to \eqref{3.6}, \eqref{3.7} and the fact that $u(t,x)\equiv 0$ for $t\geq 0$ and $x\in \R\backslash [g (t),h(t)]$, clearly \eqref{3.8} holds for $x\in \R\backslash (g_\yy+3\epsilon/16, h_\yy-3\epsilon/16)$. It remains to show \eqref{3.8} for  $x\in(g_\yy+3\epsilon/16, h_\yy-3\epsilon/16)$.

In view of the equation satisfied by $u$, we deduce for large $t$ and $x\in [g_\yy+3\epsilon/16, h_\yy-3\epsilon/16]$,
\begin{align}\label{U}
	U_t= d_1\int_{x-L}^{x+L}\int_{g(t)}^{h(t)}J_1(y-z)u(t, z)\rd z\rd y+\int_{x-L}^{x+L}[- d_1u + u(1 - u - kv)]\rd y. 
\end{align} 
From the boundedness of $u$ and $v$, we easily see from the above equation that  $|U_t(t,x)|$ is uniformly bounded  for all large $t$ and $x\in [g_\yy+3\epsilon/16, h_\yy-3\epsilon/16]$.  Hence for any fixed $x\in [g_\yy+3\epsilon/16, h_\yy-3\epsilon/16]$, $t\to U(t,x)$ is uniformly continuous in $t$ for all large $t$, and hence for all $t\in [0,\infty)$.

Now take $x_0=h_\yy-3\epsilon/16$  and recall that \eqref{3.8} holds with $x=x_0$; so we have
\[
\lim_{t\to\infty}\int_0^t U_t(s,x_0)ds=\lim_{t\to\infty} U(t, x_0)-U(0,x_0)=-U(0, x_0).
\]
We may now apply  Lemma \ref{lemmaba} with $\psi(t)=U_t(t,x_0)$ to conclude that 
$\lim_{t\to\yy}U_t(t,x_0)=0$. Moreover, using \eqref{3.8} with $x=x_0$ we further obtain
\begin{align*}
	\lim_{t\to\yy}\int_{x_0-L}^{x_0+L}[- d_1u + u(1 - u - kv)]\rd y=0.
\end{align*}
Hence we can use \eqref{U} to deduce
\begin{align*}
	\lim_{t\to\yy}d_1\int_{x_0-L}^{x_0+L}\int_{g(t)}^{h(t)}J_1(y-z)u(t, z)\rd z\rd y=0.
\end{align*}
It follows, due to $\inf_{x\in [-\epsilon,\epsilon]} J_1(x)>0$ and $L= \epsilon/8$, that
\begin{align*}
	0=&\ \lim_{t\to\yy}\int_{x_0-L}^{x_0+L}\int_{g(t)}^{ h(t)}J_1(y-z)u(t, z)\rd z\rd y\geq 	\limsup\limits_{t\to\yy}\int_{x_0-L}^{x_0+L}\int_{x_0-2L}^{x_0+2L}J_1(y-z)u(t, z)\rd z\rd y\\
	\geq&\ 2L\inf_{x\in [-\epsilon,\epsilon]} J_1(x)\limsup\limits_{t\to\yy}\int_{x_0-2L}^{x_0+2L}u(t, z)\rd z\geq 0,
\end{align*}
which implies
\begin{align*}
	\lim_{t\to\yy}\int_{x_0-2L}^{x_0+2L}u(t, z)\rd z=0.
\end{align*}
In particular, $\lim_{t\to\yy} U(t,x)=0$ for all $x\in [x_0-L,x_0]$. 

Now we may repeat  the above argument with $x_0$ replaced by $x_1:=x_0-L$,  and  similarly  show  $\lim_{t\to\infty}U(t,x)=0$ for $x\in [x_1-L, x_1]=[x_0-2L, x_0-L]$. 

Analogously we may take $y_0=g_\yy+3\epsilon/16$ and show that \eqref{3.8} holds for $x\in [y_0, y_0+L]$ and then continue to show that \eqref{3.8} holds for
$x\in [y_0+L, y_0+2L]$, etc.

Clearly after finitely many steps we reach the conclusion that \eqref{3.8} holds for all $x\in [y_0, x_0]$, as desired. We have thus proved that \eqref{3.8} holds for every $x\in\R$, namely
\[
\lim_{t\to\infty} \int_{x-L}^{x+L} u(t,y)dy=0 \mbox{ for every } x\in\R.
\]
Since $u(t,y)=0$ for all $t>0$ and $y\not\in[g_\infty, h_\infty]$, this implies \eqref{3.5}.
The proof of the lemma is now complete.
\end{proof}
\begin{remark}\label{remark3.6}
If $g_\yy=-\infty$ and $h_\yy<+\infty$, then we could take $x_1=x_0-L$, ... $x_{n+1}=x_n-L$ and repeat the argument in the proof of  Lemma {\rm \ref{lemma3.5}}
finitely many times to obtain 
\[
\lim_{t\to\infty}\int_{-M}^\infty u(t,x)dx=0 \mbox{ for every $M>0$.}
\]
When $g_\yy>-\infty$ and $h_\yy=+\infty$, we can show
\[
\lim_{t\to\infty}\int^{M}_{-\infty} u(t,x)dx=0 \mbox{ for every $M>0$.}
\]
\end{remark}

To prove the next lemma, we will need a trick introduced  in the proof of Theorem 3.3 in \cite{DY-2022-DCDS}, which is formulated
 in a more general form below.

\begin{lemma}\label{general}
Suppose that $s_1(t)$ and $s_2(t)$ are continuous bounded functions over $[0,\infty)$ satisfying $s_1(t)<s_2(t)$ for all $t\geq 0$.
Let $U(t,x)$ be a continuous bounded function over  $\Omega:=\{(t,x): t\geq 0,\ x\in [s_1(t), s_2(t)]\}$, with $U_t(t,x)$ also continuous in $\Omega$. Then there exist sequences
$(\underline t_n, \underline x_n)$ and $(\bar t_n,\bar x_n)$ with $\underline x_n\in [s_1(\underline t_n), s_2(\underline t_n]$, $\bar x_n\in [s_1(\bar t_n), s_2(\bar t_n)]$
and $\lim_{n\to\infty} \underline t_n=\lim_{n\to\infty} \bar t_n=\infty$ such that
\[\begin{cases}
\lim_{n\to\infty} U(\underline t_n, \underline x_n)=\underline U,\ \lim_{n\to\infty}U_t(\underline t_n, \underline x_n)=0,\\
\lim_{n\to\infty} U(\bar t_n, \bar x_n)=\overline U,\ \lim_{n\to\infty}U_t(\bar t_n, \bar x_n)=0,
\end{cases}
\]
where
\[
\underline U:= \liminf_{t\to\infty}\min_{x\in [s_1(t), s_2(t)]}U(t,x),\ \overline U:= \limsup_{t\to\infty}\max_{x\in [s_1(t), s_2(t)]}U(t,x).
\]
\end{lemma}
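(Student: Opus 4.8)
The plan is to work with the scalar functions
$M(t):=\max_{x\in[s_1(t),s_2(t)]}U(t,x)$ and $m(t):=\min_{x\in[s_1(t),s_2(t)]}U(t,x)$,
so that $\overline U=\limsup_{t\to\infty}M(t)$ and $\underline U=\liminf_{t\to\infty}m(t)$. First I would check that $M$ and $m$ are continuous and bounded on $[0,\infty)$: boundedness is inherited from $U$, and continuity follows from a standard semicontinuity argument — if $t_j\to t_0$ and $x_j$ maximises $U(t_j,\cdot)$, then $(x_j)$ is bounded, and choosing a subsequence along which $M(t_j)$ converges to its limsup and $x_j$ converges, its limit $x^*$ lies in $[s_1(t_0),s_2(t_0)]$ by continuity of $s_1,s_2$, so $\limsup_j M(t_j)=U(t_0,x^*)\le M(t_0)$; projecting a maximiser of $U(t_0,\cdot)$ into $[s_1(t),s_2(t)]$ gives $\liminf_{t\to t_0}M(t)\ge M(t_0)$. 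Since replacing $U$ by $-U$ turns $M$ into $-m$ and $\overline U$ into $-\underline U$, it suffices to produce the sequence $(\bar t_n,\bar x_n)$ attached to $\overline U$; then $(\underline t_n,\underline x_n)$ comes from applying the construction to $-U$.

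The core step is a hand-made perturbation argument of Ekeland type applied to $M$. For each $n$, pick $c_n\to\infty$ with $c_n\ge n^2$ and $M(c_n)>\overline U-1/n^2$ (possible since $\limsup_{t\to\infty}M(t)=\overline U$). Because $M$ is bounded above while $M(t)-\tfrac1n|t-c_n|\to-\infty$ as $t\to\infty$, the continuous function $t\mapsto M(t)-\tfrac1n|t-c_n|$ attains its maximum over $[0,\infty)$ at some point $\theta_n$. Comparing its values at $\theta_n$ and at $c_n$ gives $M(\theta_n)\ge M(c_n)+\tfrac1n|\theta_n-c_n|$, hence $M(\theta_n)>\overline U-1/n^2$ and $\tfrac1n|\theta_n-c_n|\le \sup M-\overline U+1/n^2$; therefore $|\theta_n-c_n|=O(n)$, so $\theta_n\to\infty$, and then $M(\theta_n)\le\overline U+o(1)$, whence $M(\theta_n)\to\overline U$. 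Maximality of $\theta_n$ together with the triangle inequality gives the cone estimate
\[
M(t)\ \le\ M(\theta_n)+\tfrac1n\,|t-\theta_n|\qquad\text{for all }t\ge 0 .
\]

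Next I would let $\bar x_n\in[s_1(\theta_n),s_2(\theta_n)]$ realise $U(\theta_n,\bar x_n)=M(\theta_n)$ and set $\bar t_n:=\theta_n$, so that $U(\bar t_n,\bar x_n)=M(\theta_n)\to\overline U$. For $|h|$ small one has $(\theta_n+h,\bar x_n)\in\Omega$ (the delicate point, see below), hence $M(\theta_n+h)\ge U(\theta_n+h,\bar x_n)$; inserting this into the cone estimate and using $M(\theta_n)=U(\theta_n,\bar x_n)$ gives $U(\theta_n+h,\bar x_n)-U(\theta_n,\bar x_n)\le\tfrac1n|h|$. Dividing by $h>0$ and letting $h\to0^+$, then dividing by $h<0$ and letting $h\to0^-$, and using the continuity of $U_t$, yields $-1/n\le U_t(\bar t_n,\bar x_n)\le 1/n$, i.e. $U_t(\bar t_n,\bar x_n)\to0$, which is the desired conclusion.

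The step I expect to be the main obstacle is the assertion that $\bar x_n$ can be chosen so that $(\theta_n+h,\bar x_n)\in\Omega$ for all small $|h|$, i.e. that a maximiser of $U(\theta_n,\cdot)$ lies in the \emph{open} vertical slice $\bigl(s_1(\theta_n),s_2(\theta_n)\bigr)$. If the maximum of $U(\theta_n,\cdot)$ were attained only on the moving boundary $x=s_i(\theta_n)$, then $\bar x_n$ could leave $\Omega$ as soon as $t$ moves away from $\theta_n$, and since no control on $U_x$ is assumed there would be nothing to compensate with. In the settings in which this lemma will be applied the difficulty does not arise — either $s_1,s_2$ are constant, or (as in the free boundary problem) $U$ vanishes on $x=s_i(t)$, so that once $M(\theta_n)$ is close to the positive number $\overline U$ the maximiser is automatically interior. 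The remaining ingredients — continuity of $M$ and $m$, the bookkeeping guaranteeing $\theta_n\to\infty$, and the reduction $U\mapsto-U$ — are routine.
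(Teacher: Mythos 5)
Your argument is correct and takes a genuinely different route from the paper's. The paper works directly with the one-sided derivatives of $M(t):=\max_{x}U(t,x)$: it shows $M'(t+0)=U_t(t,\overline\xi(t))$ and $M'(t-0)=U_t(t,\underline\xi(t))$ for suitably chosen maximisers $\overline\xi(t),\underline\xi(t)$ (so in particular $M'(t-0)\le M'(t+0)$), and then runs a trichotomy on the behaviour of $M$ (a sequence of local maxima realising the limsup; eventually nondecreasing; eventually nonincreasing), extracting in each case times where the relevant one-sided derivative tends to $0$. Your Ekeland-type perturbation of $M$ replaces both the one-sided derivative formula and the case analysis: the cone estimate $M(t)\le M(\theta_n)+\tfrac1n|t-\theta_n|$ delivers $|U_t(\theta_n,\bar x_n)|\le 1/n$ and $M(\theta_n)\to\overline U$ in one stroke, which is arguably cleaner; what it does not give you is the differentiability structure of $M$ itself, which the paper's version records along the way (though it is not needed for the stated conclusion). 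The boundary caveat you flag — that the comparison $M(\theta_n+h)\ge U(\theta_n+h,\bar x_n)$ needs $\bar x_n\in[s_1(\theta_n+h),s_2(\theta_n+h)]$ — is real for genuinely moving $s_i$, but you should not feel it disqualifies your proof: the paper's own inequalities $U(s,\overline\xi(t))\le M(s)$ and $U(t,\overline\xi(s))\le M(t)$ rest on exactly the same tacit assumption, and in every application in the paper the interval $[s_1,s_2]$ is in fact fixed, so both arguments are complete there. Your reduction from $\underline U$ to $\overline U$ via $U\mapsto -U$ and your continuity argument for $M$ are both fine.
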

\begin{proof}
We only prove the existence of $(\bar t_n, \bar x_n)$ since the existence of $(\underline t_n, \underline x_n)$ then follows by considering the function $V(t,x)=-U(t,x)$.

Denote
 \[M(t):=\max_{x\in [s_1(t), s_2(t)]}U(t,x) \mbox{ and } X(t):=\{x\in [s_1(t), s_2(t)]: U(t,x)=M(t)\}.\]
Then $X(t)$ is a compact set for each $t>0$. Therefore, there exist $\underline\xi(t),\;\overline\xi(t)\in X(t)$ such that
  \[U_{t}(t, \underline\xi(t))=\min_{x\in X(t)}U_{t}(t,x),\quad U_{t}(t, \overline\xi(t))=\max_{x\in X(t)}U_{t}(t,x).\]
We claim that $M(t)$ satisfies, for each $t>0$,
\bes\label{3.8a}
\left\{\begin{array}{ll}
M'(t+0):=\dd\lim_{s>t, s\to t}\frac{M(s)-M(t)}{s-t} =U_{t}(t,\overline\xi(t)),\\[3mm]
 M'(t-0):=\dd\lim_{s<t, s\to t}\frac{M(s)-M(t)}{s-t} =U_{t}(t,\underline\xi(t)).
 \end{array}\right.
\ees
Indeed, for any fixed $t>0$ and $s>t$, we have
 \[ U(s,\overline\xi(t))-U(t,\overline\xi(t))\leq M(s)-M(t)\leq  U(s,\overline \xi(s))-U(t,\overline\xi (s)).\]
It follows that
 \bes\label{geq}
\liminf_{s>t, s\to t}\frac{M(s)-M(t)}{s-t} \geq U_{t}(t,\overline\xi(t)),
 \ees
and
 \[\limsup_{s>t, s\to t}\frac{M(s)-M(t)}{s-t}\leq \limsup_{s>t, s\to t} \frac{U(s,\overline \xi(s))-U(t,\overline\xi(s))}{s-t}.\]
Let $s_n\searrow t$ satisfy
  \[\lim_{n\to\infty}\frac{U(s_n,\overline \xi(s_n))-U(t,\overline\xi(s_n))}{s_n-t}= \limsup_{s>t, s\to t} \frac{U(s,\overline \xi(s))-U(t,\overline\xi(s))}{s-t}.\]
By passing to a subsequence if necessary, we may assume that $\overline \xi(s_n)\to \xi$ as $n\to\infty$. Then $U(t, \xi)=\dd\lim_{n\to\infty} M(s_n)=M(t)$ and hence
$\xi\in X(t)$.
Due to the continuity of $U_{t}(t,x)$, it follows immediately that
 \[\lim_{n\to\infty}\frac{U(s_n,\overline \xi(s_n))-U(t,\overline\xi(s_n))}{s_n-t}=U_{t}(t,\xi)\leq U_{t}(t,\overline \xi(t)).\]
We thus obtain
 \[\limsup_{s>t, s\to t}\frac{M(s)-M(t)}{s-t}\leq U_{t}(t,\overline \xi_i(t)).\]
Combining this with \eqref{geq} we obtain
 \[M'(t+0)=U_{t}(t,\overline \xi(t)).\]
Analogously we can show
 \[M'(t-0)=U_{t}(t,\underline \xi(t)).\]
Let us note from \eqref{3.8a} that $M'(t-0)\leq M'(t+0)$ for all $t>0$. Therefore if $M(t)$ has a local maximum at $t=t_0$, then
$M'(t_0)$ exists and $M'(t_0)=0$. 

Regarding the function $M(t)$ we have three possibilities:

  (a)  it has a sequence of local maxima $\{t_n\}$ such that 
  \[
  \mbox{$\lim_{n\to\infty} t_n=\infty$ and $\lim_{n\to\infty} M(t_n)=\limsup_{t\to\infty} M(t)$,}
  \]
  
  (b) it is monotone nondecreasing for all large $t$ and so $\lim_{t\to\infty} M(t)$ exists,
  
  (c)  it is monotone nonincreasing for all large $t$ and so $\lim_{t\to\infty} M(t)$ exists.
  \medskip
  
 In case (a) we take $\bar t_n=t_n,\ \bar x_n=\overline \xi(t_n)$ and  so
 \[
 U_t(\bar t_n, \bar x_n)=M'(t_n)=0,\ U(\bar t_n, \bar x_n)=U(t_n, \overline \xi(t_n))=M(t_n)\to \limsup_{t\to\infty} M(t) \mbox{ as } n\to\infty.
 \]
 
 In case (b) 
necessarily $M'(t_n-0)\to 0$ along some sequence $t_n\to\infty$ for otherwise $M'(t+0)\geq M'(t-0)\geq \delta >0$ for some $\delta>0$ and all large $t$, which leads to the contradiction 
$M(t)\to\infty$ as $t\to\infty$. We now take $\bar t_n=t_n$ and $\bar x_n=\underline\xi(t_n)$, and obtain
\[\begin{cases}
U_t(\bar t_n,\bar x_n)=U_t(t_n, \underline \xi(t_n))=M'(t_n-0)\to 0,\\ U(\bar t_n, \bar x_n)=U(t_n, \underline \xi(t_n))=M(t_n)\to\lim_{t\to\infty} M(t) \mbox{ as } n\to\infty.
\end{cases}\]

In case (c), 
necessarily $M'(s_n+0)\to 0$ along some sequence $s_n\to\infty$ for otherwise $M'(t-0)\leq M'(t+0)\leq -\delta <0$ for some $\delta>0$ and all large $t$, which leads to the contradiction
$M(t)\to -\infty$ as $t\to\infty$. We now take $\bar t_n=s_n$ and $\bar x_n=\overline\xi(s_n)$, and obtain
\[\begin{cases}
U_t(\bar t_n,\bar x_n)=U_t(s_n, \overline \xi(s_n))=M'(s_n+0)\to 0,\\ U(\bar t_n, \bar x_n)=U(s_n, \overline \xi(s_n))=M(s_n)\to\lim_{t\to\infty} M(t) \mbox{ as } n\to\infty.
\end{cases}\]
The proof is complete.
\end{proof}

\begin{lemma}\label{l2.7} If  $h_\infty - g_\infty < \infty$, then for any given $L>0$, we have
	\begin{align}\label{3.9}
		\liminf\limits_{t\to \infty} \min_{x\in [-L,L]}v(t,x)>0,
	\end{align}
	and for any $\epsilon>0$, there is $L_*=L_*(\epsilon, L)\gg 1$ such that  $L_1\geq L_*$ leads to
	\begin{align}\label{3.10}
	\liminf\limits_{t\to\yy} \min_{|x|\in [L_1,L_1+L]}v(t,x)\geq 1-\epsilon,
	\end{align}
which  implies
\begin{align*}
		\liminf\limits_{t\to\yy} v(t,x)\geq 1-\epsilon \mbox{ for every}\ x\geq L_1.
\end{align*}
\end{lemma}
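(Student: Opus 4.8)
The plan is to handle the far-field estimate \eqref{3.10} by comparison with a nonlocal logistic equation, and the compact-set estimate \eqref{3.9} by the same comparison together with Lemma \ref{general}, which does the real work. I would first record the standing facts used throughout: $\bar v:=\max\{\|v_0\|_\infty,1\}$ is a spatially constant supersolution of the $v$-equation (since $\mathcal L$ annihilates constants and $\gamma\bar v(1-\bar v-hu)\le0$), so $0\le v\le\bar v$; since $v_0\not\equiv0$, the nonlocal strong maximum principle gives $v(t,x)>0$ for all $t>0$, $x\in\R$; $u$ is bounded, $0\le u\le C_1$, by comparison with the Fisher--KPP equation; $\int_\R u(t,\cdot)\to0$ by Lemma \ref{lemma3.5}; and, crucially, since $h_\infty-g_\infty<\infty$ while $g(t)>g_\infty$ and $h(t)<h_\infty$ for all $t$, the set $\{u(t,\cdot)>0\}$ stays inside the \emph{fixed} bounded interval $[g_\infty,h_\infty]$, so on any interval disjoint from $[g_\infty,h_\infty]$ the equation for $v$ is exactly $v_t=d_2[\int_\R J_2(\cdot-y)v\,dy-v]+\gamma v(1-v)$.

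For \eqref{3.10}: given $\epsilon>0$ and $L>0$, I would pick $\ell=\ell(\epsilon,L)$ large enough that $\lambda_p(d_2\mathcal L^{J_2}_{(0,L+2\ell)})>-\gamma$ (possible by Proposition B applied with $J_2,d_2$ in place of $J_1,d_1$) and, moreover, that the unique positive steady state of the nonlocal logistic equation on an interval of length $L+2\ell$ exceeds $1-\epsilon$ at every point at distance $\ge\ell$ from the endpoints — such steady states tend to $1$ uniformly on interior compacta as the interval length $\to\infty$, which is standard. Set $L_*:=h_\infty+\ell$. For $L_1\ge L_*$, the interval $I^*:=(L_1-\ell,\,L_1+L+\ell)$ is disjoint from $[g_\infty,h_\infty]$, hence $u\equiv0$ on $I^*$; then for $t>T_0$ (any fixed $T_0>0$) $v$ is a supersolution on $I^*$ of $w_t=d_2[\int_{I^*}J_2(\cdot-y)w\,dy-w]+\gamma w(1-w)$ with $w(T_0,\cdot)=v(T_0,\cdot)>0$, because discarding the part of $\int_\R J_2*v$ outside $I^*$ only decreases it and $hu=0$ there; thus $v\ge w$ on $I^*$, and by \cite[Proposition 3.5]{Cao-2019-JFA} $w(t,\cdot)$ converges uniformly to that positive steady state, which is $\ge1-\epsilon$ on $[L_1,L_1+L]$. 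This gives \eqref{3.10}; the last assertion of the lemma then follows by applying \eqref{3.10} with $L_1$ replaced by $x-L$ for $x\ge L_1+L$, and by \eqref{3.10} itself for $L_1\le x\le L_1+L$.

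For \eqref{3.9}: the same comparison on the intervals $(h_\infty,h_\infty+\ell)$ and $(g_\infty-\ell,g_\infty)$, with $\ell$ large enough that $\lambda_p>-\gamma$ and $[-L,L]\subset(g_\infty-\ell,h_\infty+\ell)$, gives $\liminf_{t\to\infty}v(t,\cdot)\ge W>0$ uniformly on compact subsets of these intervals, where $W$ is the respective positive steady state; this yields \eqref{3.9} for $x\in[-L,L]\setminus[g_\infty,h_\infty]$ and, in particular, a constant $\rho_0>0$ with $v(t,x)\ge\rho_0$ on $I_0:=[h_\infty+w,h_\infty+2w]$ for all large $t$, where $w:=\delta_0/2$ and $\delta_0>0$ is chosen so that $J_2\ge c_0>0$ on $[-\delta_0,\delta_0]$. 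It remains to push this bound leftwards across $[g_\infty,h_\infty]$. Let $I_{k+1}$ be the width-$w$ interval immediately to the left of $I_k$; since $h_\infty-g_\infty<\infty$, finitely many such strips cover $[g_\infty,h_\infty]$. Inductively, assume $v(t,x)\ge\rho_k$ on $I_k$ for all large $t$, and apply Lemma \ref{general} to $U=v$ on $\{(t,x):t\ge0,\ x\in I_{k+1}\}$ (on which $v$ and $v_t$ are continuous and bounded) to obtain $\underline t_n\to\infty$ and $\underline x_n\in I_{k+1}$ with $v(\underline t_n,\underline x_n)\to\underline m:=\liminf_{t\to\infty}\min_{I_{k+1}}v$ and $v_t(\underline t_n,\underline x_n)\to0$. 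For $\underline x_n\in I_{k+1}$ and $y\in I_k$ one has $\underline x_n-y\in[-2w,0]\subset[-\delta_0,\delta_0]$, so $J_2(\underline x_n-y)\ge c_0$ and $d_2\int_\R J_2(\underline x_n-y)v(\underline t_n,y)\,dy\ge d_2c_0w\rho_k$ for all large $n$, while the remaining terms of the $v$-equation are $\ge-C_6v(\underline t_n,\underline x_n)$ with $C_6:=d_2+\gamma\bar v+\gamma hC_1$. Passing to the limit in $v_t(\underline t_n,\underline x_n)\ge d_2c_0w\rho_k-C_6v(\underline t_n,\underline x_n)$ gives $\underline m\ge d_2c_0w\rho_k/C_6$, so $v(t,x)\ge\rho_{k+1}:=d_2c_0w\rho_k/(2C_6)>0$ on $I_{k+1}$ for all large $t$. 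After finitely many steps this yields $\liminf_{t\to\infty}\min_{[-L,L]}v\ge\min_k\rho_k>0$.

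The main obstacle is exactly this inductive step. A naive approach would treat the $v$-equation on $[g_\infty,h_\infty]$ as a perturbation of a logistic equation, but $\int_\R u\to0$ provides no pointwise or uniform smallness of $u$ on $[g_\infty,h_\infty]$ — obtaining such smallness would be essentially the stronger conclusion \eqref{3.4}, which we do not expect to prove here — so the competition term $-\gamma huv$ is out of control there. The fix is not to fight $u$ on $[g_\infty,h_\infty]$ at all, but to use the nonlocal diffusion term in the $v$-equation as a genuine positive source fed from the adjacent strip where $v$ is already bounded below; the subtlety is that this source bound is sharp only at a point where $v$ attains its minimum over the strip, and converting it into a lower bound requires knowing that $v_t$ also vanishes (in the limit) at such a point — producing a sequence of times and points at which both hold is precisely what Lemma \ref{general} delivers. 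The only other cautions are minor: $v_0\not\equiv0$ is needed so that the comparison data $v(T_0,\cdot)$ is strictly positive, and $h_\infty-g_\infty<\infty$ is what makes the induction terminate after finitely many steps.
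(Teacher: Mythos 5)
Your proposal is correct and follows essentially the same route as the paper: \eqref{3.10} by comparison with the nonlocal logistic equation on intervals disjoint from $[g_\infty,h_\infty]$ (using \cite[Propositions 3.5 and 3.6]{Cao-2019-JFA}), and \eqref{3.9} by iterating across $[g_\infty,h_\infty]$ in strips of width comparable to the radius on which $J_2$ is positive, with Lemma \ref{general} supplying a minimizing sequence along which $v_t\to 0$ so that the nonlocal term, fed from the adjacent strip, forces a positive lower bound. The only cosmetic difference is that you run the inductive step directly with an explicit constant $\rho_{k+1}=d_2c_0w\rho_k/(2C_6)$, whereas the paper argues by contradiction (assuming the liminf is $0$) and invokes Fatou's lemma.
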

\begin{proof} {\bf Step 1.} We first show \eqref{3.10}, and  only consider the case of $x>0$, as the case $x<0$ can be treated similarly. 
	
	Recalling that $u(t,x)=0$ for all $x\not\in [g_\yy,h_\yy]$, we see that $v$ satisfies 
\begin{align*}
v_t \geq d_2\int_{h_\yy}^{\yy}J_2(x - y)v(t, y)\rd y - d_2v + \gamma v(1 - v ),  \
t > 0,~x\in [h_\yy,\yy).
\end{align*}

To prove \eqref{3.10}, 
we will utilize the  conclusions of \cite[Propositions 3.5 and 3.6]{Cao-2019-JFA} about the solution of a nonlocal diffusion problem over a fixed spatial interval.  To be precise, for  constants $a<b$, let $w(t,x)$ be the solution of the following problem 
\begin{align*}
	w_t = d_2\int_{a}^{b}J_2(x - y)w(t, y)\rd y - d_2w+ \gamma w(1 - w )
\ \ \	t > 0,~x\in (a,b),
\end{align*}
with continuous initial function $w(0,x)\geq,\not\equiv 0$ in $[a,b]$. Then by \cite[Proposition 3.5]{Cao-2019-JFA}, for sufficient large $b-a$, the solution $w(t,x)$ converges to $w_{ab}(x)$ uniformly for $x\in [a,b]$ as $t\to\yy$  with $\lambda_p(\mathcal{L}_{(a, b)}+ \gamma) > 0$, where $w_{ab}(x)$ satisfies
\begin{align*}
	d_2\int_{a}^{b}J_2(x - y)w_{ab}(y)\rd y - d_2w_{ab}+ \gamma w_{ab}(1 - w_{ab} )=0,  
	\ \ \  x\in (a,b).
\end{align*}
\cite[Proposition 3.6]{Cao-2019-JFA} then asserts
\begin{align*}
	\lim_{a\to-\yy,\ b\to\yy} w_{ab}(x)=1\ \ {\rm locally\ uniformly\ in}\ \R,
\end{align*}
Hence, there is  large $L_*>L$ such that for $-a,b\geq  L_*$,
\begin{align*}
	w_{ab}(x)>1-\epsilon/2 \mbox{ in }  [-L,L].
	\end{align*}	
Recalling from \cite[Proposition 3.6]{Cao-2019-JFA} that  $w_{ab}$  has the  shifting invariance property:  for $\td a=a+\delta$ and $\td b=b+\delta$, there holds $ w_{\td a \td b}(x)=w_{ab}(x-\delta)$ for $x\in [\td a,\td b]$. Taking $-a=b=\tilde L_1\geq L_*$, $\td a=h_\yy$ and $\td b= 2\tilde L_1+h_\yy$, we obtain
\begin{align*}
	w_{\td a\td b}(x)\geq 1-\epsilon/2 \mbox{ for } \ x\in [L_1,L_1+L]
\end{align*}
with  $L_1:=\tilde L_1+h_\yy$, due to $w_{\td a\td b}(x)=w_{ab}(x-h_\yy-\td L_1)$ for $x\in [\td a,\td b]$.
Then, the convergence  $\lim_{t\to\infty}w(t,x)=w_{\tilde a\tilde b}(x)$ as a solution defined on $(t,x)\in [0,\yy)\times [\td a, \td b]$ gives 
	\begin{align}\label{3.11}
	\liminf\limits_{t\to\yy} \min_{|x|\in [L_1,L_1+L]}w(t,x)\geq 1-\epsilon.
\end{align}
 Letting $w(0,x):=v(1,x)$ for $x\in [\td a,\td b]=[h_\yy,h_\yy+2\td L_1]$, by the comparison principle, we obtain
\begin{align}\label{v-w}
	v(t,x)\geq w(t,x) \ \mbox{ for }  \ t\geq 0,\ x\in  [h_\yy,h_\yy+2\td L_1],
\end{align}
and so \eqref{3.10} follows from \eqref{3.11}. 
\medskip

{\bf Step 2.} We now verify \eqref{3.9}.  

From \eqref{v-w} we see that for any $[A,B]\subset \R\backslash (g_\yy,h_\yy)$,
\begin{align*}
	\liminf\limits_{t\to \infty} \min_{x\in [A,B]}v(t,x)>0,
\end{align*}
Hence, to show the validity of \eqref{3.9}, we only need to check
\begin{align}\label{3.12}
	\liminf\limits_{t\to \infty} \min_{x\in [g_\yy,h_\yy]}v(t,x)>0.
\end{align} 

Denote 
\begin{align*}
\delta_0:=\liminf\limits_{t\to \yy} \min_{x\in [h_\yy,h_\yy+1]}v(t,x)>0
\end{align*} 
and choose $\epsilon_0>0$ small so that $J_2(x)>0$ for $x\in [-2\epsilon_0, 2\epsilon_0]$.
 We show that 
\begin{align*}
	\delta_1:=\liminf\limits_{t\to \yy} \min_{x\in [h_\yy-\epsilon_0,h_\yy]}v(t,x)>0.
\end{align*}
Suppose, on the contrary, $\delta_1=0$, we are going to derive a contradiction. Using $\delta_1=0$ and Lemma \ref{general}, we can find two sequences $t_n \rightarrow \infty$ and $x_n\in [h_\yy-\epsilon_0,h_\yy]$ satisfying $x_n\to x_0\in [h_\yy-\epsilon_0,h_\yy]$ such that 
\begin{align*}
	\lim\limits_{n\rightarrow \infty}v(t_n,x_n) = 0, ~\lim\limits_{n\rightarrow \infty}v_t(t_n, x_n) = 0.
\end{align*}
Then, letting $(t,x)=(t_n,x_n)$ and $n\to\yy$ in the equation satisfied by $v$, we obtain
\begin{align*}
	0=&\lim_{n\to \yy} \int_{\R}J_2(x_n - y)v(t_n, y)\rd y\geq \liminf\limits_{n\to \yy} \int_{h_\yy}^{h_\yy+1}J_2(x_n - y)v(t_n, y)\rd y\\
	\geq &\ \delta_0\int_{h_\yy}^{h_\yy+1}J_2(x_0 - y)\rd y>0,
\end{align*}
where we have used Fatou's Lemma in the third inequality.  This is a contradiction. Hence $\delta_1>0$. 

We may now similarly show that 
\begin{align*}
	\delta_2=\liminf\limits_{t\to \yy} \min_{x\in [h_\yy-2\epsilon_0,h_\yy-\epsilon_0]}v(t,x)>0.
\end{align*}
Repeating the argument several times, one can obtain the desired inequality  \eqref{3.12}.   The proof is completed.  
\end{proof}

\begin{lemma}\label{lemma3.7} If  $h_\infty - g_\infty < \infty$, then  for every $L>0$,  we have
	\begin{align}\label{3.13}
		\lim_{t\to\yy}\int_{-L}^{L}|v(t,y)-1| \rd y= 0
	\end{align}
and 
\begin{align}\label{3.14}
	%&\lim_{t\to\yy}v(t,x)\to 1, \ \ {\rm a.e.}\ x\in [g_\yy,h_\yy],\\
	&\lim_{t\to\yy}v(t,x)\to 1\ \ \ {\rm  uniformly\ for} \ x\in [-L, L]\backslash  (g_\yy,h_\yy).
\end{align}
	
\end{lemma}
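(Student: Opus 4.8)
\emph{Overview and Step 1 (an upper bound).} The plan is to obtain \eqref{3.13} and \eqref{3.14} in three stages: an upper bound $\sup_xv\to1$, an $L^1$ estimate for $v$ giving \eqref{3.13}, and a pointwise upgrade off $(g_\infty,h_\infty)$ giving \eqref{3.14}. For the upper bound, compare $v$ with the solution $\bar v(t)$ of the logistic ODE $\bar v'=\gamma\bar v(1-\bar v)$, $\bar v(0)=\|v_0\|_\infty$: this is a supersolution of the $v$-equation (constants annihilate the nonlocal term and $-\gamma h\bar vu\le0$), so $v(t,x)\le\bar v(t)\to1$; this is already in the proof of Lemma \ref{lem:2.2}. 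In particular $(v-1)^+(t,x)\le\epsilon(t):=(\bar v(t)-1)^+\to0$ uniformly in $x$, so it remains to prove that the deficit $\int_{-L}^L(1-v(t,x))^+dx\to0$ for every $L>0$.

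\emph{Step 2 (the $L^1$ estimate).} Fix $L>0$ and take $R\ge L$ large. By \eqref{3.10} we may assume $v\ge1-\epsilon'$ on $\{|x|\ge R\}$ for all large $t$, with $\epsilon'$ as small as we wish once $R$ is large; and by \eqref{3.9}, together with the fact that on each of $[h_\infty,R]$ and $[-R,g_\infty]$ (where $u\equiv0$) the function $v$ is a supersolution of a nonlocal logistic equation whose positive steady state is bounded below \emph{uniformly in the length of the interval}, we get $v\ge\delta$ on $[-R,R]$ for some fixed $\delta>0$ independent of $R$ and all large $t$. Let $N_R(t):=\int_{-R}^R(1-v(t,x))\,dx$. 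Differentiating and using the $v$-equation: the reaction contribution is $-\gamma\int_{-R}^Rv(1-v)\,dx$, which on $\{v\le1\}$ is $\le-\tfrac{\gamma\delta}{2}\int_{-R}^R(1-v)^+dx\le-\tfrac{\gamma\delta}{2}N_R(t)$ by the elementary bound $v(1-v)\ge\tfrac{\delta}{2}(1-v)$ (valid since $v\ge\delta$), and on $\{v>1\}$ is $O\!\big(R\,\epsilon(t)\big)\to0$; the competition contribution is bounded by $\gamma h\|v\|_\infty\int_\R u\,dx\to0$ by Lemma \ref{lemma3.5}; and the nonlocal flux through $\pm R$, rewritten as $d_2\int_{-R}^R(v-1)\big(1-\Phi_R\big)-d_2\int_{|y|>R}(v-1)\Phi_R$ with $\Phi_R(y)=\int_{-R}^RJ_2(x-y)\,dx$, is $o_R(1)$ as $R\to\infty$ because $v-1$ is uniformly small where this weight is not. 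Hence $N_R'(t)\le-\tfrac{\gamma\delta}{2}N_R(t)+o_R(1)+o_t(1)$, so $\limsup_{t\to\infty}N_R(t)\le o_R(1)$; since $\int_{-L}^L(1-v)^+dx\le\int_{-R}^R(1-v)^+dx=N_R(t)+\int_{-R}^R(v-1)^+dx\le N_R(t)+2R\epsilon(t)$, we get $\limsup_{t\to\infty}\int_{-L}^L(1-v)^+dx\le o_R(1)$ for every $R\ge L$; letting $R\to\infty$ gives $\int_{-L}^L(1-v)^+dx\to0$, which with Step 1 proves \eqref{3.13}.

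\emph{Step 3 (the pointwise statement).} For $x_0\in[-L,L]\setminus(g_\infty,h_\infty)$ one has $x_0\notin[g(t),h(t)]$ for all $t$, so $u(t,x_0)\equiv0$ and $v(\cdot,x_0)$ solves the scalar equation $\frac{d}{dt}v(t,x_0)=d_2\big[(J_2*v)(t,x_0)-v(t,x_0)\big]+\gamma v(t,x_0)(1-v(t,x_0))$ exactly. Splitting $(J_2*v)(t,x_0)=\int_{|y|\le M}+\int_{|y|>M}$: the first term differs from $\int_{|y|\le M}J_2(x_0-y)\,dy$ by at most $\|J_2\|_\infty\|v(t,\cdot)-1\|_{L^1([-M,M])}\to0$ by \eqref{3.13}, and for $M\ge L+R$ the second is within $\epsilon'$ of $\int_{|y|>M}J_2(x_0-y)\,dy$ by \eqref{3.10}, uniformly for $x_0\in[-L,L]$; hence $(J_2*v)(t,x_0)\to1$ uniformly in such $x_0$. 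Thus $\frac{d}{dt}v(t,x_0)=(1-v(t,x_0))(d_2+\gamma v(t,x_0))+o(1)$ uniformly; since $v_{tt}(t,x_0)$ is bounded, $t\mapsto v_t(t,x_0)$ is uniformly continuous and $\int_0^tv_t(s,x_0)\,ds=v(t,x_0)-v(0,x_0)$ is bounded, so Barbalat's Lemma \ref{lemmaba} forces $v_t(t,x_0)\to0$; as $d_2+\gamma v(t,x_0)\ge d_2>0$, this yields $v(t,x_0)\to1$, uniformly for $x_0\in[-L,L]\setminus(g_\infty,h_\infty)$ by the uniformity of all the estimates. This proves \eqref{3.14}.

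\emph{Where the difficulty lies.} The crux is Step 2: the competition term $-\gamma h vu$ is only known to be small in $L^1(dx)$ (Lemma \ref{lemma3.5}) — $u$ may concentrate into tall spikes on a shrinking set — so no pointwise comparison is available and everything has to be done at the level of integrated quantities. Within Step 2 the genuinely delicate point is estimating the nonlocal flux through the boundary of the localization window $[-R,R]$ so that it is dominated by the logistic damping as $R\to\infty$ (this is where any decay/moment property of $J_2$, if needed, enters, and where one must use carefully that $v$ is already close to $1$ near $\pm R$); the uniform lower bound $v\ge\delta$ from Lemma \ref{l2.7} and the sign of $v(1-v)$ are precisely what turn the deficit into a damping term and close the Gronwall inequality.
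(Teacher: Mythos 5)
Your Steps 1 and 3 are essentially sound (Step 1 is the paper's bound \eqref{3.19}; Step 3 mirrors the paper's Step 3, except that Barbalat's Lemma requires $\lim_{t\to\yy}\int_0^t v_t(s,x_0)\,\rd s$ to \emph{exist}, not merely to be bounded — you should instead extract a sequence $t_n\to\yy$ with $v(t_n,x_0)\to\liminf v$ and $v_t(t_n,x_0)\to 0$ as in Lemma \ref{general}, which is what the paper does). The genuine gap is in Step 2, in the boundary-flux estimate. Writing the flux as $d_2\int_{-R}^{R}\int_{|y|>R}J_1\!\!\!\!\!\!\!\!\!\;\;\; J_2(x-y)[v(t,y)-v(t,x)]\,\rd y\,\rd x$, the best available bound on the region $\{L_1\le |x|\le R,\ |y|>R\}$ is $2(\epsilon'+\epsilon(t))\,m_R$ with $m_R:=\int_{-R}^{R}\int_{|y|>R}J_2(x-y)\,\rd y\,\rd x=\int_0^{2R}2\big(\int_s^\yy J_2\big)\rd s$. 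The hypothesis $(\mathbf{J})$ imposes no moment condition on $J_2$, and for heavy-tailed kernels (e.g.\ $J_2(z)\asymp |z|^{-1-\alpha}$ with $\alpha\in(0,1)$) one has $m_R\asymp R^{1-\alpha}\to\yy$. Since $\epsilon'=\epsilon'(L_1)$ comes from \eqref{3.10} with no quantitative decay rate, the product $\epsilon'(L_1)\,m_R$ cannot be made small for any admissible choice $L_1\le R$, so the flux is \emph{not} $o_R(1)$ uniformly in large $t$ and the Gronwall inequality does not close. You flagged exactly this point yourself ("where any decay/moment property of $J_2$, if needed, enters"), but such a property is not available here, so this is a real gap rather than a technicality.

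It is worth seeing how the paper sidesteps the issue: instead of integrating the deficit over a large window $[-R,R]$ and paying a boundary flux, it studies the sliding average $V(t,x)=\frac{1}{2L}\int_{x-L}^{x+L}v(t,y)\,\rd y$, which by Fubini satisfies a \emph{closed} equation of the same nonlocal form, $V_t=d_2(J_2*V-V)+\frac{1}{2L}\int_{x-L}^{x+L}\gamma v(1-v-hu)$ — no flux term ever appears. One then applies the $\liminf$/$\limsup$ device of Lemma \ref{general} at a minimizing sequence $(t_n,x_n)$ where $V\to\delta_1$ and $V_t\to 0$: Fatou's lemma shows the diffusion contribution is $\ge 0$ there, forcing $\int_{x_n-L}^{x_n+L}v(1-v)\to 0$, and the uniform lower bound $v\ge k_0>0$ from \eqref{3.9} (needed only on the \emph{fixed} window, so no uniformity in $R$ is required) then forces $\int(1-v)\to 0$ on that window, contradicting $\delta_1<1$. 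If you want to salvage a Gronwall-type argument, you would have to replace the sharp cutoff by this averaging structure; as written, Step 2 does not go through under $(\mathbf{J})$ alone.
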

\begin{proof} {\bf Step 1.} We  prove that
		\begin{align}\label{3.15}
			\lim_{t\to\yy}V(t,x)= 1 \mbox{ for every $x\in \R$,}
		\end{align}
 where
	\begin{align*}
		  V(t,x):=\frac{1}{2L}\int_{x-L}^{x+L}v(t,y) \rd y.
	\end{align*} 

Since
\begin{align*}
	&\frac{1}{2L}\int_{x-L}^{x+L}\int_\mathbb{R}J_2(y - z)v(t, z)\rd z\rd y=\frac{1}{2L}\int_{x-L}^{x+L}\int_\mathbb{R}J_2(z)v(t, y-z)\rd z\rd y\\
	=&\frac{1}{2L}\int_\mathbb{R}J_2(z)\lf(\int_{x-L}^{x+L}v(t, y-z)\rd y\rr)\rd z=\int_\mathbb{R}J_2(z)V(t,x-z)\rd z\nonumber\\
	=&\int_\mathbb{R}J_2(x-y)V(t,y)\rd y,\nonumber
\end{align*}
by the equation for $v$, the function  $V(t,x)$ satisfies
\begin{equation}\begin{aligned}\label{3.16}
	V_t=& \ d_2\frac{1}{2L}\int_{x-L}^{x+L}\int_\mathbb{R}J_2(y - z)v(t, z)\rd z\rd y- d_2V + \frac{1}{2L}\int_{x-L}^{x+L}\gamma v(1 - v - hu)\rd y\\
	=&\  d_2\int_\mathbb{R}J_2(x-y)V(t,y)\rd y- d_2V + \frac{1}{2L}\int_{x-L}^{x+L}\gamma v(1 - v - hu)\rd y \mbox{ for } t>0,\ x\in \R.
\end{aligned}
\end{equation}

To show \eqref{3.15}, it suffices to verify for any given $0<  \delta<1$, 
\begin{align*}
\liminf\limits_{t\to\yy} V(t,x)=\liminf\limits_{t\to\yy}\frac{1}{2L}\int_{x-L}^{x+L}v(t,y) \rd y\geq \delta \mbox{ for every}  \ x\in \R.
\end{align*}

We first show that this is true when $|x|$ is large. 
  In fact, by \eqref{3.10}, for any $\epsilon\in (0,1-\delta)$, any $a>0$ and all large $L_1>0$, say $L_1\geq L^*(a)>0$,
	\begin{align*}
	\liminf\limits_{t\to\yy} \min_{|x|\in [L_1,L_1+a]}v(t,x)\geq 1-\epsilon>\delta.
\end{align*}
Hence, for  $|x|\geq L^*(2L)+L$, by Fatou's Lemma, 
\begin{align}\label{3.18}
	\liminf\limits_{t\to\yy} V(t,x)=\liminf\limits_{t\to\yy}\frac{1}{2L}\int_{x-L}^{x+L}v(t,y) \rd y\geq \frac{1}{2L}\int_{x-L}^{x+L}\liminf\limits_{t\to\yy}v(t,y) \rd y>\delta.
\end{align}

For $|x|\leq L^*(2L)+L$, the desired conclusion is a consequence of the following stronger result:
\begin{align}\label{V-1}
	\liminf\limits_{t\to \yy}\min_{|x|\leq L_1}V(t,x)\geq \delta \mbox{ for every $L_1>0$}.
\end{align}
Otherwise, there exists $L_1>0$ such that
\[
\delta_1:=\liminf\limits_{t\to \yy}\min_{|x|\leq L_1}V(t,x)<\delta.
\]
Without loss of generality, we may assume that $L_1>L^*(2L)+L$.
 By Lemma \ref{general}  we can choose  two sequences $t_n \rightarrow \infty$ and $x_n\in [-L_1,L_1]$ satisfying $x_n\to  x_0\in [-L_1,L_1]$ such that 
\begin{align*}
	\lim\limits_{n\rightarrow \infty}V(t_n,x_n) = \delta_1, ~\lim\limits_{n\rightarrow \infty}V_t(t_n, x_n) = 0.
\end{align*}
We may also require, by passing to a subsequence if necessary, that the following limits exist
\begin{align*}
	\lim_{n\to\yy}\int_\mathbb{R}J_2(x_n - y)V(t_n, y)\rd y,\ \ \lim_{n\to\yy}\frac{1}{2L}\int_{x_n-L}^{x_n+L}\gamma v(t_n,y)[1-v(t_n,y)]\rd y.
\end{align*}
Then from \eqref{3.16} we deduce, upon using \eqref{3.5},
\begin{align*}
	0=&\lim_{n\to\yy}d_2\int_\mathbb{R}J_2(x_n - y)V(t_n, y)\rd y-d_2\delta_1+\lim_{n\to\yy}\frac{\gamma}{2L}\int_{x_n-L}^{x_n+L} v(t_n,y)[1-v(t_n,y)]\rd y\\
	\geq&\ d_2\int_\mathbb{R}\liminf\limits_{n\to\yy} \lf[J_2(x_n - y)V(t_n, y)\rr]\rd y-d_2\delta_1\\
	&+\frac{\gamma}{2L}\int_{-L}^{L} \liminf\limits_{n\to\yy}\big(v(t_n,y+x_n)[1-v(t_n,y+x_n)]\big)\rd y.
\end{align*}
where Fatou's Lemma is used in the last inequality. In view of the definition of $\delta_1$ and \eqref{3.18}, we have 
\begin{align*}
	d_2\int_\mathbb{R}\liminf\limits_{n\to \yy}J_2(x_n - y)V(t_n, y)\rd y-d_2\delta_1
	\geq	d_2\int_\mathbb{R}J_2( x_0 - y)\delta_1\rd y-d_2\delta_1= 0.
\end{align*}
From $v\geq 0$ and $\limsup\limits_{t\to\yy} v(t,y)\leq 1$, we obtain
\begin{align*}
\int_{-L}^{L}\liminf\limits_{n\to\yy}\big(v(t_n,y+x_n)[1-v(t_n,y+x_n)]\big)\rd y\geq 0.
\end{align*}
Thus we have
\[
0=\lim_{n\to\yy}d_2\int_\mathbb{R}J_2(x_n - y)V(t_n, y)\rd y-d_2\delta_1+\lim_{n\to\yy}\frac{\gamma}{2L}\int_{x_n-L}^{x_n+L} v(t_n,y)[1-v(t_n,y)]\rd y\geq 0,
\]
 which implies
\begin{align}\label{3.18a}
	&\lim_{n\to\yy}\int_\mathbb{R}J_2(x_n - y)V(t_n, y)\rd y-d_2\delta_1=0\nonumber\\
	&\lim_{n\to\yy}\int_{x_n-L}^{x_n+L} v(t_n,y)[1-v(t_n,y)]\rd y=0.
\end{align}

Moreover, comparing $v$ with the solution $w$  of the ODE 
\[
w'=\gamma w(1-w),\ w(0)=k_1:=\max\limits_{x\in \R}{ v}_0(x)+1,
\]
we obtain
\begin{align}\label{3.19}
	v(t,x)\leq w(t)\leq 1 + (k_1 - 1)e^{-\gamma t} \mbox{ for }  t\geq 0,\ x\in \R.
\end{align}

{\bf Claim}. For small $\epsilon_1>0$,
\begin{align}\label{3.20}
		\lim_{n\to\yy}\int_{x_0-L+\epsilon_1}^{x_0+L-\epsilon_1} [1-v(t_n,y)]\rd y=0.
\end{align}

In view of \eqref{3.18a}, $x_n\to x_0$, and $\liminf\limits_{n\to\yy}v(t_n,y+x_n)[1-v(t_n,y+x_n)]\geq 0$, we have
\begin{align}\label{3.21}
\lim_{n\to\yy}\int_{x_0-L+\epsilon_1}^{x_0+L-\epsilon_1}v(t_n,y)[1-v(t_n,y)]\rd y=0.
\end{align}
Denote 
\begin{align*}
	&\Omega_1(t_n):=\{x\in [x_0-L+\epsilon_1,x_0+L-\epsilon_1]: v(t_n,x)\leq 1\},\\ 
	&\Omega_2(t_n):=\{x\in [x_0-L+\epsilon_1,x_0+L-\epsilon_1]: v(t_n,x)> 1\}.
\end{align*}
Then clearly
\begin{align*}
	&\int_{x_0-L+\epsilon_1}^{x_0+L-\epsilon_1}v(t_n,y)[1-v(t_n,y)]\rd y\\
	=&\int_{\Omega_1(t_n)}v(t_n,y)[1-v(t_n,y)]\rd y+\int_{\Omega_2(t_n)}v(t_n,y)[1-v(t_n,y)]\rd y\\
	\geq &\ k_0\int_{\Omega_1(t_n)}[1-v(t_n,y)]\rd y+\int_{\Omega_2(t_n)}v(t_n,y)[1-v(t_n,y)]\rd y,
\end{align*}
where $k_0:=\min_{\{t\geq 1,\ y\in[x_0-L+\epsilon_1, x_0+L-\epsilon_1]\}}v(t,y)>0$ by \eqref{3.9}. 

By \eqref{3.19}, 
\begin{align*}
	\lim\limits_{n\to\yy}\max_{y\in \Omega_2(t_n)} |1-v(t_n,y)|= 0,
\end{align*}
which implies
\begin{align*}
	\lim_{n\to\yy}\int_{\Omega_2(t_n)}[1-v(t_n,y)]\rd y=\lim_{n\to\yy}\int_{\Omega_2(t_n)}v(t_n,y)[1-v(t_n,y)]\rd y=0.
\end{align*}
Therefore, as $n\to\infty$,  by \eqref{3.21},
\begin{align*}
0&\leq k_0\int_{\Omega_1(t_n)}[1-v(t_n,y)]\rd y\\
&\leq \int_{x_0-L+\epsilon_1}^{x_0+L-\epsilon_1}v(t_n,y)[1-v(t_n,y)]\rd y-\int_{\Omega_2(t_n)}v(t_n,y)[1-v(t_n,y)]\rd y\to 0.
\end{align*}
It follows that
\[
\lim_{n\to\infty}\int_{\Omega_1(t_n)}[1-v(t_n,y)]\rd y=0.
\]
Hence
\[
\lim_{n\to\yy}\int_{x_0-L+\epsilon_1}^{x_0+L-\epsilon_1} [1-v(t_n,y)]\rd y=\lim_{n\to\infty}\left[\int_{\Omega_1(t_n)}[1-v(t_n,y)]\rd y+\int_{\Omega_2(t_n)}[1-v(t_n,y)]\rd y\right]=0.
\]
This proves \eqref{3.20}.

Now we are ready to use the above information to get a contradiction. Recalling the definition of $V$ and $x_n\to x_0$ as $n\to \yy$, one sees that for large $n$,
\begin{align*}
	V(t_n,x_n)=\frac{1}{2L}\int_{x_n-L}^{x_n+L}v(t_n,y) \rd y
	\geq \frac{1}{2L}\int_{x_0-L+\epsilon_1}^{x_0+L-\epsilon_1} v(t_n,y)\rd y.
\end{align*}
Then by  \eqref{3.20} we obtain, for sufficiently small $\epsilon_1>0$, 
\begin{align*}
	\lim_{n\to \yy}	V(t_n,x_n)\geq \lim_{n\to \yy} \frac{1}{2L}\int_{x_0-L+\epsilon_1}^{x_0+L-\epsilon_1} v(t_n,y)\rd y=  \frac{L-\epsilon_1}{L}>\delta_1,
\end{align*}
which contradicts to the fact $\lim\limits_{n\rightarrow \infty}V(t_n,x_n) = \delta_1$.
Therefore \eqref{3.15} holds. 

{\bf Step 2.}  We show \eqref{3.13}.

 We first observe that \eqref{V-1} and \eqref{3.19} imply 
\begin{align}\label{3.22}
	\lim_{t\to \yy}V(t,x)=1\ \  \ {\rm locally\ uniformly\ for}\ x\in \R.
\end{align}

Next by a simple computation we have
\begin{align*}
	\frac{1}{2L}\int_{-L}^{L}|v(t,y)-1| \rd y=1-V(t, 0)+\frac{1}{L}\int_{\Omega_2}[v(t,y)-1] \rd y
\end{align*}
where $\Omega_2=\Omega_2(t):=\{y\in [-L,L]: v(t,y)> 1\}$. Now \eqref{3.13} follows directly from \eqref{3.22} and
 \eqref{3.19}. 

{\bf Step 3.} We prove \eqref{3.14}.  

Since $u(t,x)\equiv 0$ for $x\not\in (g_\yy,h_\yy)$,  the function $v$ satisfies, for any interval $[a,b]\subset  \R\backslash  (g_\yy,h_\yy)$,
\begin{align}\label{v-[a,b]}
		v_t = d_2\int_{\R}J_2(x - y)v(t, y)\rd y - d_2v + \gamma v(1 - v ),  \ \ 
		t > 0,~x\in [a, b].
	\end{align}
 Denote 
\begin{align*}
	\delta_2=\liminf\limits_{t\to \yy} \min_{x\in [a,b]}v(t,x).
\end{align*}
To show \eqref{3.14}, we only need to prove $\delta_2=1$ due to the arbitrariness of $[a, b]$. 

By Lemma \ref{general} we can choose  two sequences $t_n \rightarrow \infty$ and $x_n\in [a,b]$ satisfying $x_n\to x_0\in [a,b]$  as $n \to \yy$ such that 
\begin{align*}
	\lim\limits_{n\rightarrow \infty}v(t_n,x_n) = \delta_2, ~\lim\limits_{n\rightarrow \infty}v_t(t_n, x_n) = 0.
\end{align*}
We may also require that the following limit exists
\begin{align*}
		B:=\lim_{n\to\yy}\int_\mathbb{R}J_2(x_n - y)v(t_n, y)\rd y.
\end{align*}
We claim that $B=1$. 
Otherwise, $B\neq 1$. Since $\limsup\limits_{t\to\yy} v\leq 1$, we must have $B<1$. Due to $\int_{\R}J_2(x)\rd x=1$, there is large $\hat L>0$ such that 
\begin{align*}
	\int_{-\hat L}^{\hat L}J_2(x_0-y)\rd y>B.
\end{align*}
By \eqref{3.13},
\begin{align*}
	&\int_\mathbb{R}J_2(x_n - y)v(t_n, y)\rd y\geq \int_{-\hat L}^{\hat L}J_2(x_n - y)v(t_n, y)\rd y\\
	= &  \int_{-\hat L}^{\hat L}J_2(x_n - y)\rd y+\int_{-\hat L}^{\hat L}J_2(x_n - y)[v(t_n, y)-1]\rd y\\
	\geq& \int_{-\hat L}^{\hat L}J_2(x_n - y)\rd y-\|J_2\|_\infty\int_{-\hat L}^{\hat L}|v(t_n, y)-1|\rd y\\
	\to & \int_{-\hat L}^{\hat L}J_2(x_0 - y)\rd y>B \ \  {\rm as}\ n\to \yy.
\end{align*}
This is a contradiction. Hence $B=1$.

Taking $(t,x)=(t_n,x_n)$ in \eqref{v-[a,b]} and letting $n\to\infty$, in view of $B=1$, we obtain 
\begin{align*}
	0=d_2-d_2\delta_2+\gamma \delta_2(1-\delta_2) = (1- \delta_2)(d_2 + \gamma\delta_2),
\end{align*}
which  implies $\delta_2=1$, as desired. The proof is finished.  
\end{proof}

Theorem \ref{th1.1}  clearly follows directly from Lemmas \ref{lem:2.2}, \ref{lemma3.5} and  \ref{lemma3.7}.

\section{Proof of Theorem \ref{th1.2}}  

Th arguments in the corresponding local diffusion case considered in \cite{DuLin} indicate that $h_\infty-g_\infty<\infty$ implies \eqref{3.4}.
Next we investigate whether this is also true in our nonlocal situation here. To this end we will make use of the estimates in Theorem \ref{th1.1} and regard \eqref{KnK1.2} as a perturbation of an ODE system.

Denote 
\begin{align*}
	m_1(t,x):=d_1\int_{g(t)}^{h(t)}J_{ 1}(x - y)u(t, y)\rd y\geq 0, \ \ m_2(t,x):=d_2\int_{\R}J_2(x - y)v(t, y)\rd y-d_2. 
\end{align*}
Then by Lemma \ref{lemma3.5} and Lemma \ref{lemma3.7}, it is easily seen that,   as $t\to \yy$,
\begin{equation}\label{m12}
\mbox{ $m_1(t,x)\to 0$ uniformly for $x\in\R$,\ $m_2(t,x)\to 0$ locally uniformly for $x\in\R$.}
\end{equation}
 The functions $u(t,x)$ and $v(t,x)$ satisfy, for $t\geq 0$, $x\in (g(t),h(t))$,
\[\begin{cases}
	u_t = m_1 + u(1 -d_1- u - kv),\\
	v_t = m_2 +d_2(1-v) + \gamma v(1 - v - hu),
\end{cases}
\]
which can be viewed as an ODE system for each fixed $x$.

 Recall that  under the assumption $h_\infty-g_\infty<\infty$, necessarily 
 \[
 \td d_1=d_1+k-1>0.\]
  Let
$F(s)$ and
$\Theta_1, \Theta_2$ be given by \eqref{F} and \eqref{theta}. 
Note that $F(0)=-\td d_1\gamma h<0$ and $F(1)=-d_2<0$. Therefore if $(\gamma, h, k, d_1, d_2)\in \Theta_1$, then
\begin{align}\label{F*}
	F_*:=\max_{s\in [0,1]} F(s)<0.
\end{align}

\begin{lemma}\label{Theta2} Assume $\tilde d_1=d_1+k-1>0$. Denote
	\begin{align*}
		a:=\gamma (1-hk), \ b:= \td d_1\gamma h-\gamma(1-hk)-d_2,\  c:=-\td d_1\gamma h,
	\end{align*}
	and so $F(s)=as^2+bs+c$.
	Then
	\begin{align*}
			(\gamma,h,k,d_1,d_2)\in \Theta_2 \Longleftrightarrow  a\leq c\ {\rm and}\  \sqrt{\frac ca}\leq \frac b {-2a}\leq 1.
	\end{align*}
\end{lemma}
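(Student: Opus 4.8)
The plan is to translate the condition ``$F$ has at least one root in $[0,1]$'' into inequalities on the coefficients $a,b,c$ by analysing the quadratic $F(s)=as^2+bs+c$, using the two sign facts that are already recorded just above the statement, namely $F(0)=c<0$ and $F(1)=a+b+c<0$. Since $F(0)<0$, a root in $[0,1]$ can only exist if $F$ is not identically negative there, i.e. if $F$ attains a nonnegative maximum on $[0,1]$. Because $F(0)<0$ and $F(1)<0$, any such maximum is attained at an interior critical point, which forces $a=\gamma(1-hk)>0$ (the parabola must open upward) and the vertex $s_v:=b/(-2a)$ must lie in $(0,1)$. Conversely, if $a>0$, $s_v\in(0,1)$ and $F(s_v)\ge 0$, then by continuity (using $F(0)<0$, say) $F$ has a root in $(0,s_v]\subset[0,1]$. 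So the first reduction is:
\[
(\gamma,h,k,d_1,d_2)\in\Theta_2 \iff a>0,\ \ 0<\frac{b}{-2a}<1,\ \ F\!\left(\frac{b}{-2a}\right)\ge 0.
\]

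Next I would compute the vertex value: $F(s_v)=c-\dfrac{b^2}{4a}$, so $F(s_v)\ge 0 \iff b^2\le 4ac \iff \dfrac{b^2}{4a^2}\le \dfrac{c}{a}$ (here $a>0$), i.e. $s_v=\dfrac{b}{-2a}\le \sqrt{c/a}$ — note $b<0$ is automatic once $a>0$ and $s_v>0$, and $c<0$ always, so $c/a<0$...

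Here is the subtlety that will be the main obstacle, and I want to handle it carefully: since $c=-\tilde d_1\gamma h<0$ and we need $a>0$, the ratio $c/a$ is \emph{negative}, so $\sqrt{c/a}$ is not real and the inequality $b^2\le 4ac$ cannot hold — which would wrongly make $\Theta_2$ empty. The resolution must be that the paper's intended reading of ``$\sqrt{c/a}$'' presupposes $c/a\ge 0$, equivalently (given $a>0$) $c\ge 0$; but $c=-\tilde d_1\gamma h$ with $\tilde d_1>0$ forces $c<0$. So I expect that the statement is really being applied under a sign convention making the quantities work, or that $a,b,c$ here should be read with $F$ replaced by $-F$, or that the relevant comparison is $|s_v|$ versus $|c/a|$-type quantities. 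Concretely, I would rewrite $F(s_v)\ge0$ as $-b^2\ge -4ac$, i.e. $b^2\le 4ac$, and observe $4ac = 4\gamma(1-hk)\cdot(-\tilde d_1\gamma h)$; when $1-hk<0$ this is positive and $a<0$, when $1-hk>0$ it is negative and $a>0$. Thus the case $a>0$ genuinely cannot produce $F(s_v)\ge0$, meaning $\Theta_2\ne\emptyset$ requires $a<0$ (strong competition, $hk>1$), the parabola opens \emph{downward}, $F(0),F(1)<0$ is then compatible with $F>0$ on a middle subinterval, and the relevant conditions become $a<0$, vertex $s_v=b/(-2a)\in(0,1)$, and $F(s_v)\ge0\iff b^2\le 4ac$ with now $c/a=(-\tilde d_1\gamma h)/\gamma(1-hk)>0$ so $\sqrt{c/a}$ is real. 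In this regime $a<0$ gives $s_v\le 1\iff b\ge -2a$ is the \emph{wrong} direction, so one must track the sign of $a$ throughout: with $a<0$, $-2a>0$, and $s_v=b/(-2a)$, the condition $s_v\le 1$ is $b\le -2a$, while $F(s_v)=c-b^2/(4a)\ge0 \iff b^2\le 4ac \iff b^2/(4a^2)\ge c/a \iff s_v^2\ge c/a$ (dividing by $a<0$ flips the inequality) $\iff s_v\ge\sqrt{c/a}$. Combined with $s_v\le1$ this is exactly $\sqrt{c/a}\le b/(-2a)\le1$, and the displayed condition ``$a\le c$'' should then emerge from the requirement that the roots straddle $[0,1]$ correctly, e.g. from $F(1)<0$ together with $a<0$ one gets $a+b+c<0$; I would check that ``$a\le c$'' is equivalent to the vertex-plus-endpoint data forcing a root to actually land in $[0,1]$ rather than outside — i.e. it is the clean repackaging of ``$a<0$ and $b\le -2a$'' after substituting the explicit $a,b,c$.

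So the key steps, in order, are: (1) reduce $\Theta_2$-membership to ``$F$ has a nonnegative interior maximum on $[0,1]$'' using $F(0),F(1)<0$; (2) deduce the parabola opens downward and locate the vertex $s_v=b/(-2a)$, requiring $s_v\in(0,1)$; (3) express $F(s_v)\ge0$ as $b^2\le4ac$ and, dividing by $a<0$, as $s_v\ge\sqrt{c/a}$, thereby getting $\sqrt{c/a}\le b/(-2a)\le1$; (4) show the remaining hypothesis reduces precisely to $a\le c$ — I would verify this by checking both implications against the explicit expressions for $a,b,c$, which is the routine-calculation part I will not grind through here. The main obstacle is keeping the sign of $a$ straight (every inequality involving division by $a$ flips), and confirming that the ``$a\le c$'' inequality is not redundant but genuinely the condition that pins a root inside $[0,1]$ as opposed to merely making $F$ positive somewhere on $\mathbb R$.
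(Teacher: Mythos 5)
Your approach is correct in substance but takes a genuinely different route from the paper. You characterise membership of $\Theta_2$ via the maximum of the parabola: since $F(0)=c<0$ and $F(1)=-d_2<0$, a root in $[0,1]$ forces a nonnegative interior maximum, hence $a<0$, vertex $s_v=\frac{b}{-2a}\in(0,1)$, and $F(s_v)\geq 0$, which unpacks to $\sqrt{c/a}\leq \frac{b}{-2a}\leq 1$. The paper instead works with the explicit root formula $\frac{-b\pm\sqrt{b^2-4ac}}{2a}$: it first rules out $a\geq 0$ by showing the root condition would force $a+b+c\geq 0$, contradicting $a+b+c=-d_2<0$, then uses $\sqrt{b^2-4ac}<|b|$ (valid since $ac>0$ when $a,c<0$) to locate both roots, and reduces to $a<0$ and $2\sqrt{ac}\leq b\leq -2a$. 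Your vertex argument is conceptually cleaner and avoids manipulating nested radicals, but three points in your writeup need tightening. First, your opening claim that the interior maximum forces $a>0$ is backwards (an upward parabola has an interior \emph{minimum}); you do catch and correct this, but the final writeup should start from $a<0$. Second, in the $a<0$ case the equivalence is $F(s_v)=c-\frac{b^2}{4a}\geq 0\iff b^2\geq 4ac$ (multiplying by $4a<0$ flips the inequality), not $b^2\leq 4ac$ as you wrote twice; your endpoint $s_v\geq\sqrt{c/a}$ is nevertheless correct, since dividing $b^2\geq 4ac$ by $4a^2>0$ gives $s_v^2\geq c/a$. Third, the deferred step (4) is not a substitution of the explicit expressions for $a,b,c$ but a one-line observation: since $c<0$ always, the condition $a\leq c$ is exactly equivalent to ``$a<0$ and $\sqrt{c/a}\leq 1$'', i.e.\ it records the sign of $a$ and makes $\sqrt{c/a}$ well defined; and the endpoint case $s_v=1$ may be included or excluded freely because $F(s_v)\geq 0=F(1)+d_2>F(1)$ rules it out. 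With those repairs your argument is complete and matches the stated equivalence.
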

\begin{proof}
Clearly $a+b+c+d_2=0$ and $c<0$. If $a=0$ then $F(s)=0$ implies $s=\frac{-c}{b}=\frac{-c}{-c-d_2}$ which can never be in [0,1].
If $a>0$ then $F(s)=0$ and $s\in [0,1]$ imply
\[ 
s=\frac{-b+\sqrt{b^2-4ac}}{2a}\leq 1.
\]
It follows that
\[ \sqrt{b^2-4ac}\leq 2a+b\implies b^2-4ac\leq b^2+4ab+4a^2\implies a(a+b+c)\geq 0\implies a+b+c\geq 0.
\]
But $a+b+c=-d_2<0$. We have thus proved that 
\[
(\gamma,h,k,d_1,d_2)\in \Theta_2\implies a<0.
\]
With $a<0$ and $b^2-4ac\geq 0$ we easily see that $\sqrt{b^2-4ac}< |b|$ and so 
\[
\frac{-b\pm\sqrt{b^2-4ac}}{2a} \mbox{ has the same sign as } \frac{-b}{2a}.
\]
Thus 
\[
(\gamma,h,k,d_1,d_2)\in \Theta_2\Longleftrightarrow  a<0<b, b^2-4ac\geq 0 \mbox{ and } \frac{-b+\sqrt{b^2-4ac}}{2a}\leq 1.
\]
With $a<0<b$, clearly $b^2-4ac\geq 0$ is equivalent to $b\geq 2\sqrt{ac}$, and $\frac{-b+\sqrt{b^2-4ac}}{2a}\leq 1$
is equivalent to $\sqrt{b^2-4ac}\geq 2a+b$, which holds trivially when $b\leq -2a$. If $b>-2a$ then 
\[\sqrt{b^2-4ac}\geq 2a+b \implies b^2-4ac\geq b^2+4ab+4a^2\implies a(a+b+c)\leq 0\implies a+b+c\geq 0
\]
which is impossible since $a+b+c=-d_2<0$. We have thus proved that
\[
(\gamma,h,k,d_1,d_2)\in \Theta_2\Longleftrightarrow  a<0 \mbox{ and } 2\sqrt{ac}\leq b\leq -2a.
\]
Finally, it is easily seen that
\[
a<0 \mbox{ and } 2\sqrt{ac}\leq b\leq -2a\Longleftrightarrow a\leq c\ {\rm and}\  \sqrt{\frac ca}\leq \frac b {-2a}\leq 1.
\]
This completes the proof.
\end{proof}
\begin{remark}\label{remak3} Suppose  $\tilde d_1>0$. It can be easily checked that $(\gamma,h,k,d_1,d_2)\in \Theta_1$ if
\begin{align*}
	d_1\geq 1 \  {\rm or}\ \dd kh\leq 1+\frac{d_2}{\gamma}.
\end{align*}
Indeed, 
\[d_1\geq 1\implies \tilde d_1\geq k\implies c\leq -\gamma kh=a-\gamma<a,
\]
and by Lemma \ref{Theta2} we see $(\gamma,h,k,d_1,d_2)\not\in \Theta_2$. Thus $d_1\geq 1$ implies $(\gamma,h,k,d_1,d_2)\in \Theta_1$.

If $kh\leq 1+\frac{d_2}{\gamma}$,  then  for $s\in (0,1)$,
\begin{align*}
	F(s)&=\gamma (1-hk)s^2-[\gamma(1-hk)+d_2]s+\td d_1\gamma h s-\td d_1\gamma h\\
	&< \gamma (1-hk)s^2-[\gamma(1-hk)+d_2]s\\
	&=\gamma (hk-1)(s-s^2)-d_2 s\leq d_2(s-s^2)-d_2 s=-d_2 s^2<0.
\end{align*} 
 Recalling $F(0)=-\td d_1\gamma h<0$, $F(1)=-d_1<0$, we see that
 $kh\leq 1+\frac{d_2}{\gamma}$ implies $(\gamma,h,k,d_1,d_2)\in \Theta_1$.
\end{remark}

\begin{lemma}\label{lem:2.10} Suppose $h_\yy-g_\yy<\yy$ and denote $\td v:=1-v$.  Then
there exists $M\in (0, 1)$ such that 
\begin{align}\label{3.40b}
	\limsup\limits_{t\to \yy} \max_{x\in [g_\yy,h_\yy]}u(t,x)\leq \max\{0, kM-\td d_1\},\ \ 	\limsup\limits_{t\to \yy} \max_{x\in [g_\yy,h_\yy]}|\td v(t,x)|\leq M.
\end{align}
If further $d_1\geq 1$, then \eqref{3.4}  holds.
\end{lemma}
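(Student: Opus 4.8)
The plan is to regard \eqref{KnK1.2}, via the rewriting displayed just before the lemma, as a nonautonomous perturbation of the ODE system $u_t=u(1-d_1-u-kv)$, $v_t=d_2(1-v)+\gamma v(1-v-hu)$, in which the nonlocal terms obey $m_1(t,x)\to 0$ uniformly in $x$ and $m_2(t,x)\to 0$ locally uniformly in $x$ as $t\to\infty$ (this is \eqref{m12}, which comes from Theorem~\ref{th1.1} via Lemmas~\ref{lemma3.5} and \ref{lemma3.7}), and to read off the asymptotic relations using the ``moving extremum'' device of Lemma~\ref{general}. For the choice of $M$: by \eqref{3.19}, $\limsup_{t\to\infty}\max_{x\in\R}v(t,x)\le 1$, so the negative part of $\tilde v=1-v$ tends to a limit $\le 0$, while \eqref{3.9} gives $\underline v:=\liminf_{t\to\infty}\min_{x\in[g_\infty,h_\infty]}v(t,x)\in(0,1]$; hence $\limsup_{t\to\infty}\max_{x\in[g_\infty,h_\infty]}|\tilde v(t,x)|=1-\underline v$, and I would simply take $M:=1-\underline v/2\in(0,1)$, which gives the second bound in \eqref{3.40b} (with room to spare).

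For the first bound in \eqref{3.40b}, apply Lemma~\ref{general} with $s_1=g$, $s_2=h$, $U=u$, obtaining $\bar t_n\to\infty$ and $\bar x_n\in[g(\bar t_n),h(\bar t_n)]$ with $u(\bar t_n,\bar x_n)\to\bar u:=\limsup_{t\to\infty}\max_{x\in[g_\infty,h_\infty]}u(t,x)$ and $u_t(\bar t_n,\bar x_n)\to 0$; after passing to a subsequence, also $\bar x_n\to x_*\in[g_\infty,h_\infty]$ and $v(\bar t_n,\bar x_n)\to v_*$, necessarily with $v_*\ge\underline v$. If $\bar u=0$ there is nothing to prove; if $\bar u>0$, then since $u>0$ in $(g(t),h(t))$ and vanishes at $x=g(t),h(t)$, the points $\bar x_n$ lie in the open interval for large $n$, so $u_t=m_1+u(1-d_1-u-kv)$ holds there, and letting $n\to\infty$ (using $m_1\to 0$) yields $\bar u(1-d_1-\bar u-kv_*)=0$, whence $\bar u=k(1-v_*)-\tilde d_1\le k(1-\underline v)-\tilde d_1\le kM-\tilde d_1$. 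Together with $\bar u\ge 0$ this gives the first bound in \eqref{3.40b}.

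Assume now $d_1\ge 1$. Then $\tilde d_1=d_1+k-1\ge k$, so $kM-\tilde d_1\le k(M-1)<0$, and the first bound just proved forces $\bar u=0$, i.e.\ $\max_{x\in[g(t),h(t)]}u(t,x)\to 0$ --- the first half of \eqref{3.4}. Since $\limsup v\le 1$ and, by \eqref{3.14}, $v\to 1$ uniformly on $[-L,L]\setminus(g_\infty,h_\infty)$ for each $L$, it remains only to prove $\delta:=\liminf_{t\to\infty}\min_{x\in[g_\infty,h_\infty]}v(t,x)=1$. I would apply Lemma~\ref{general} once more, now with $s_1\equiv g_\infty$, $s_2\equiv h_\infty$, $U=v$, to get $\underline t_n\to\infty$, $\underline x_n\to x_{**}\in[g_\infty,h_\infty]$ with $v(\underline t_n,\underline x_n)\to\delta$ and $v_t(\underline t_n,\underline x_n)\to 0$; substituting into $v_t=m_2+d_2(1-v)+\gamma v(1-v-hu)$ and passing to the limit --- using $m_2\to 0$ locally uniformly and $u\to 0$ uniformly --- gives $(1-\delta)(d_2+\gamma\delta)=0$, hence $\delta=1$ since $\delta>0$ by \eqref{3.9}. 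Combined with \eqref{3.14}, this yields $v\to 1$ locally uniformly on $\R$, completing the proof of \eqref{3.4}.

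I do not expect a serious analytic obstacle here: the argument is essentially a matter of combining the global estimates of Theorem~\ref{th1.1} with Lemma~\ref{general}. The two places that need care are ensuring that the extremal points produced by Lemma~\ref{general} actually lie in the open interval $(g(t),h(t))$, so that the pointwise ODE identities are legitimate, and accounting for the fact that $v$ may transiently exceed $1$, so that the identification of $\limsup_t\max_x|\tilde v|$ with $1-\underline v$ genuinely requires the one-sided bound $\limsup v\le 1$ from \eqref{3.19}.
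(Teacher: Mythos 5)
Your proof is correct and reaches all three conclusions, but it routes the key estimates through Lemma~\ref{general} where the paper uses comparison arguments. For the first bound in \eqref{3.40b}, the paper simply compares $u$ with the solution of the ODE $U'=\epsilon+U(k(1-m_*+\epsilon')-\td d_1-U)$ (using $\liminf_t\min_x v\geq m_*$ and $m_1\to0$ uniformly), taking $M=1-m_*$; your extraction of a sequence with $u(\bar t_n,\bar x_n)\to\bar u$, $u_t(\bar t_n,\bar x_n)\to0$ and passage to the limit in the equation yields the same inequality. For $d_1\geq1$ the paper runs a separate comparison ($u_t\leq m_1-u^2$, then $U'=\epsilon-U^2$) to get $u\to0$, whereas you observe that this already follows from \eqref{3.40b} because $kM-\td d_1\leq k(M-1)<0$ --- a genuine streamlining that makes the paper's extra step redundant. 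For $v\to1$, the paper compares $v$ on all of $\R$ with the solution of the nonlocal logistic equation with carrying capacity $1-h\delta$ and invokes its known convergence; you instead apply Lemma~\ref{general} to $v$ on the fixed interval $[g_\yy,h_\yy]$ to get $(1-\delta)(d_2+\gamma\delta)=0$, and patch the complement with \eqref{3.14} --- this avoids citing the global convergence result but relies on \eqref{3.14}, which is legitimate since it is part of Theorem~\ref{th1.1}. One point to make explicit: Lemma~\ref{general} is stated under the hypothesis that $U_t$ is continuous on the closed moving domain, and $u_t$ can jump across the free boundary curves $x=g(t),h(t)$ (the left $t$-derivative of $u(\cdot,h(t_0))$ is $0$ while the right one is $d_1\int J_1(h(t_0)-y)u(t_0,y)\,\rd y$); this is harmless in your application because in the only nontrivial case $\bar u>0$ the maximizers of $u(t,\cdot)$ stay in the open interval where $u$ is bounded below and the derivation of \eqref{3.8a} only uses continuity of $u_t$ near those maximizers, but a sentence acknowledging this would make the appeal to Lemma~\ref{general} airtight.
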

\begin{proof}
From \eqref{3.9}, there is $m_*>0$ such that 
\begin{align*}
	\liminf\limits_{t\to \infty} \min_{x\in [g_\yy,h_\yy]}v(t,x)\geq m_*>0,
\end{align*}
which, combined with the fact that $\limsup\limits_{t\to \yy} \max_{x\in \R}v(t,x)\leq 1$, implies 
\begin{align*}
	\limsup\limits_{t\to \yy} \max_{x\in [g_\yy,h_\yy]}|\td v(t,x)|\leq 1-m_*.
\end{align*}
Then using the equation of $u_t$ and the uniformly convergence of $m_1(t,x)$ to $0$ in $\R$, we deduce by a simple comparison argument that
\begin{align*}
	\limsup\limits_{t\to \yy} \max_{x\in [g_\yy,h_\yy]}u(t,x)\leq \max\{0, k(1-m_*)-\td d_1\}. 
\end{align*}
We have thus proved \eqref{3.40b}.

Suppose $d_1\geq 1$, and so $\td d_1\geq k$. 
	In this case,  from $\td v=1-v\leq 1$, we deduce
	\begin{align*}
		u_t \leq m_1 + u(k-\td d_1- u )\leq m_1 -u^2 \mbox{ for } t>0,\  x\in [g(t),h(t)].
	\end{align*}
	For any given small $\epsilon>0$, since  $m_1(t,x)$ converges to $0$ as $t\to\infty$ uniformly for $x$ in $\R$, we can find $T=T_\epsilon>0$ so that
	\[ m_1(t,x)\leq \epsilon \mbox{ for } t\geq T,\ x\in\R.
	\]
	Let $U(t)$ be the solution of the ODE problem
	\[
	U'=\epsilon-U^2,\  U(T)=\|u(T,\cdot)\|_\infty.
	\]
	Then by the comparison principle we deduce
	$u(t,x)\leq U(t)$ for $t\geq T$, $x\in [g (t), h(t)]$. It follows that
	\[
	\limsup_{t\to\infty}\max_{x\in [g (t), h(t)]}u(t,x)\leq\lim_{t\to\infty} U(t)=\sqrt{\epsilon}.
	\]
	Letting $\epsilon\to 0$ we deduce
	\begin{align*}
	\lim_{t\to\yy } \max_{x\in [g(t),h(t)]} u(t,x)=0.
\end{align*}
Thus for any small $\delta>0$ there exists $T_\delta>0$ so that $u(t,x)\leq \delta$ for $t\geq T_\delta$ and all $x\in\R$. It follows that
\[
v_t \geq d_2\int_\mathbb{R}J_2(x - y)v(t, y)\rd y - d_2v + \gamma v(1 - v - h\delta) \mbox{ for } t\geq T_\delta,\ x\in\R. 
\]
Let $V(t,x)$ be the unique solution of
\[\begin{cases}
\dd V_t = d_2\int_\mathbb{R}J_2(x - y)V(t, y)\rd y - d_2V + \gamma V(1 - h\delta -V) \mbox{ for } t\geq T_\delta,\ x\in\R,\\[3mm]
V(T_\delta)=v(T_\delta, x) \mbox{ for } x\in\R.
\end{cases}
\]
Since $1-h\delta>0$ for all small $\delta>0$, it is well known that
\[
V(t,x)\to 1-h\delta \mbox{ as $t\to\infty$ locally uniformly for $x\in\R$.}
\]
By the comparison principle we have
$v(t,x)\geq V(t,x)$ for $t>T$ and $x\in\R$. It follows that
\[
\liminf_{t\to\infty} v(t,x)\geq 1-h\delta \mbox{ locally uniformly in } x\in\R.
\]
Letting $\delta\to 0$ we obtain
\[
\liminf_{t\to\infty} v(t,x)\geq 1 \mbox{ locally uniformly in } x\in\R.
\]
This implies, in view of \eqref{3.19}, 
	\begin{align}\label{3.35}
		\lim_{t\to\infty} v(t,x)= 1 \mbox{ locally uniformly in } x\in\R.
	\end{align}
	Hence \eqref{3.4} holds. The proof is complete.
	\end{proof}

\begin{lemma}\label{lem:Theta1} Suppose $h_\yy-g_\yy<\yy$.
	If 
	\[
	\mbox{$d_1<1$ and  $(\gamma,h,k,d_1,d_2)\in\Theta_1$,}
	\]
	 then  \eqref{3.4}  holds.
	\end{lemma}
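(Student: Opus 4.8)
The plan is to regard \eqref{KnK1.2} as the perturbed ODE system introduced earlier in this section, track the upper limits of $u$ and of $\td v:=1-v$ by means of Lemma~\ref{general}, and show that a positive upper limit would force the quadratic $F$ in \eqref{F} to be nonnegative somewhere in $[0,1]$, which is impossible when $(\gamma,h,k,d_1,d_2)\in\Theta_1$. Concretely, I set $I:=[g_\yy,h_\yy]$ (a bounded interval, since $h_\yy-g_\yy<\yy$), $\td d_1:=d_1+k-1>0$, and rewrite the system as
\[
u_t=m_1+u\big(k\td v-\td d_1-u\big),\qquad \td v_t=-m_2-d_2\td v-\gamma(1-\td v)\big(\td v-hu\big),
\]
with $m_1,m_2$ as in this section, so that by \eqref{m12} we have $m_1(t,x)\to0$ uniformly in $x$ and $m_2(t,x)\to0$ locally uniformly in $x$ as $t\to\yy$. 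I then set
\[
\bar u:=\limsup_{t\to\yy}\max_{x\in I}u(t,x)\ (\ge 0),\qquad \bar V:=\limsup_{t\to\yy}\max_{x\in I}\td v(t,x).
\]
By \eqref{3.40b} there is $M\in(0,1)$ with $\bar V\le M<1$, so $1-\bar V>0$; by \eqref{3.19}, $\td v\ge -o(1)$ uniformly, so $\bar V\ge0$, and hence $\bar V\in[0,1)$.

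Next I apply Lemma~\ref{general} twice. Applied to $U=u$ on the moving interval $[g(t),h(t)]$ (on which $u_t$ is continuous and bounded, and where $\max_{x\in[g(t),h(t)]}u(t,x)=\max_{x\in I}u(t,x)$ because $u\ge0$ and $[g(t),h(t)]\subset I$ as $g$ is nonincreasing and $h$ nondecreasing), it produces $\hat t_n\to\yy$ and $\hat x_n\in[g(\hat t_n),h(\hat t_n)]\subset I$ with $u(\hat t_n,\hat x_n)\to\bar u$ and $u_t(\hat t_n,\hat x_n)\to0$; passing to a subsequence, $\hat x_n\to\hat x_0\in I$ and $\td v(\hat t_n,\hat x_n)\to V_0\in[0,\bar V]$, and since $\hat x_n$ stays in the compact set $I$ we get $m_1(\hat t_n,\hat x_n)\to0$, so the $u$--equation gives in the limit $\bar u\,(kV_0-\td d_1-\bar u)=0$. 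Applied to $U=\td v$ on the fixed interval $I$ (where $\td v_t=-v_t$ is continuous and bounded), it produces $\bar t_n\to\yy$ and $\bar x_n\in I$ with $\td v(\bar t_n,\bar x_n)\to\bar V$, $\td v_t(\bar t_n,\bar x_n)\to0$; passing to a subsequence, $\bar x_n\to x_0\in I$, $u(\bar t_n,\bar x_n)\to u_0\in[0,\bar u]$, and since $m_2(\bar t_n,\bar x_n)\to0$ the $\td v$--equation gives in the limit $d_2\bar V+\gamma(1-\bar V)(\bar V-hu_0)=0$, i.e.
\[
u_0=\frac{\bar V\,\big[\,d_2+\gamma(1-\bar V)\,\big]}{\gamma h\,(1-\bar V)} .
\]

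Now I argue by contradiction: assume $\bar u>0$. Then the first identity forces $\bar u=kV_0-\td d_1\le k\bar V-\td d_1$, and combining this with $u_0\le\bar u$, the formula for $u_0$, and multiplying through by $\gamma h(1-\bar V)>0$ gives
\[
\bar V\,\big[\,d_2+\gamma(1-\bar V)\,\big]\ \le\ \gamma h\,(1-\bar V)\,\big(k\bar V-\td d_1\big);
\]
a direct expansion rearranges this into exactly $F(\bar V)\ge0$, with $F$ as in \eqref{F}. But $F(0)=-\td d_1\gamma h<0$ (since $\td d_1>0$), and $(\gamma,h,k,d_1,d_2)\in\Theta_1$ says $F$ has no zero in $[0,1]$, whence $F<0$ throughout $[0,1]$; since $\bar V\in[0,1)$ this contradicts $F(\bar V)\ge0$. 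Therefore $\bar u=0$, i.e. $\max_{x\in[g(t),h(t)]}u(t,x)\to0$; feeding $u_0\le\bar u=0$ back into the formula for $u_0$ then forces $\bar V=0$ (the bracket and $\gamma h(1-\bar V)$ being positive), equivalently one may simply rerun the comparison argument from the proof of Lemma~\ref{lem:2.10} (using $1-h\delta>0$ for small $\delta$) to obtain $\liminf_{t\to\yy}v\ge1$ locally uniformly. Since $\bar V=0$ together with $\td v\ge-o(1)$ gives $v\to1$ uniformly on $I$, while \eqref{3.14} already gives $v\to1$ uniformly on compact subsets of $\R\setminus(g_\yy,h_\yy)$, we conclude that \eqref{3.4} holds.

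The main obstacle is the third step: one has to recognise that the two, a priori unrelated, limiting identities supplied by Lemma~\ref{general} collapse, after clearing the denominator $\gamma h(1-\bar V)$, into precisely $F(\bar V)\ge0$, which is exactly the situation that the definition of $\Theta_1$ rules out (and is why $F$ in \eqref{F} is the relevant object). A secondary technical point is that $u(t,x)$ and $\td v(t,x)$ need not converge pointwise as $t\to\yy$, so Lemma~\ref{general} (rather than a naive passage to the limit in the equations) is indispensable; it is applied together with the observation that the selected points $\hat x_n,\bar x_n$ stay in the fixed compact interval $I$, which is what makes the merely local-uniform decay of $m_2$ in \eqref{m12} sufficient to pass to the limit in the nonlocal equations.
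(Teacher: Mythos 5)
Your proof is correct, but it takes a genuinely different route from the paper's. The paper argues by contradiction from a sequence $u(t_n,x_n)>\epsilon_0$ and builds a one-parameter family of invariant regions $A_\sigma$ for the pointwise-in-$x$ perturbed ODE; the contradiction comes from integrating $u_t$ and $\td v_t$ over the long time interval $[T_*,t_n]$ while the trajectory is trapped in the annulus $A_{\sigma_*}\setminus\overline{A}_{\sigma_0}$, where one of the two derivatives is $\le -c_0$. You instead apply Lemma~\ref{general} twice — to $u$ on the moving domain and to $\td v$ on $[g_\yy,h_\yy]$ — and pass to the limit in the two equations to get $\bar u(kV_0-\td d_1-\bar u)=0$ and $u_0=G(\bar V)$ with $u_0\le\bar u$, $V_0\le\bar V$; clearing the denominator then yields exactly $F(\bar V)\ge 0$, which $\Theta_1$ forbids since $F(0)=-\td d_1\gamma h<0$ and $F$ has no zero in $[0,1]$ while $\bar V\in[0,M]\subset[0,1)$ by \eqref{3.40b}. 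I checked the algebra: $\gamma h(1-s)(ks-\td d_1)-s[d_2+\gamma(1-s)]$ is identically $F(s)$, so the reduction is right. Your argument is shorter and in fact never uses $d_1<1$, only $\td d_1>0$ and $\Theta_1$ (so it would also subsume the $d_1\ge 1$ case of Lemma~\ref{lem:2.10}, consistent with Remark~\ref{remak3}). What the paper's heavier machinery buys is locality in $x$: the invariant-set argument works along the single trajectory $t\mapsto(u(t,x_1),\td v(t,x_1))$, which is indispensable in Lemma~\ref{lem:Theta2} where the limit behaviour differs between $\Omega$ and its complement and a global $\limsup\max$ carries no usable information; your global argument would not adapt there.

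One technical point you should make explicit: Lemma~\ref{general} requires $U_t$ continuous on the closed region $\{(t,x):x\in[g(t),h(t)]\}$, and $u_t$ is in general discontinuous exactly at the free boundary (the left $t$-derivative of $u(\cdot,h(t))$ is $0$ while the right one equals $d_1\int J_1(h(t)-y)u(t,y)\,\rd y$). This is harmless here because $u>0$ in $(g(t),h(t))$ and $u=0$ on the boundary, so $M(t)>0$ and every maximizer — including the limit point $\xi$ in the proof of Lemma~\ref{general} — lies in the open interval, where $u_t$ is continuous; but a sentence to this effect (or applying the lemma on a slightly shrunk domain) is needed to invoke the lemma legitimately.
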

		
\begin{proof} 
	Recall that $\td v=1-v$.  So the functions $u$ and $\td v$ satisfy
	\begin{equation}\label{u-tdv}
		\begin{cases}u_t = m_1 + u(-\td d_1- u + k\td v),\\
		\td v_t = -m_2+ \gamma hu -(d_2+\gamma)\td v +\gamma \td v(\td v - hu).
	\end{cases}
	\end{equation}	 

{\bf Claim 1}.  $\lim_{t\to\infty}\max_{x\in [g_\infty, h_\infty]}u(t,x)=0$.

 Suppose by way of contradiction that the desired conclusion in Claim 1 is not true. Then there exists a sequence $(t_n,x_n)$ with $t_n\to\infty$ and $x_n\in (g_\infty, h_\infty)$ such that
 \[
 \mbox{$u(t_n, x_n)> \epsilon_0>0$ for all $n\geq 1$  and some small $\epsilon_0>0$.}
 \]
  We may assume that $t_n> T_*$ for $n\geq 1$. For later arguments  we also assume that $\epsilon_0>0$ is small enough such that $\epsilon_0^2+\gamma h \epsilon_0+F_*<F_*/2<0$, where $F_*$ is given by \eqref{F*}.

 We will derive a contradiction by constructing a family  of  invariant sets for the solution pair $(u,v)$.  
 For
\begin{align*}
	\sigma\in (\frac{\td d_1}{k},1) \mbox{ and $\epsilon_0$ chosen above,}
\end{align*}
 define
 \[
 \epsilon(\sigma):=\min\{k\sigma-\td d_1, \epsilon_0\},\ M(\sigma):=k\sigma-\td d_1+ \epsilon(\sigma),
 \]
and
\begin{align*}
	A_\sigma:=\{(p,q)\in\R^2:0\leq p<M(\sigma) ,\  q< \sigma\}.
\end{align*} 
We will show that $A_\sigma$ is the desired invariant family.

Clearly $M(\sigma)$ is a continuous and strictly increasing function of $\sigma$ over the interval $[\frac{\td d_1}{k},1]$, with $M(\frac{\td d_1}{k})=0$.
 Let $\sigma_0\in (\frac{\td d_1}{k},1)$ be uniquely determined by
 \[
M(\sigma_0)=\epsilon_0.
 \]
 
 By \eqref{3.40b}, there exists $\sigma_*\in  (\sigma_0,1)$ and $T_*>0$ such that
 \[
 (u(t,x), \td v(t,x))\in A_{\sigma_*} \mbox{ for all } x\in [g_\infty, h_\infty],\ t\geq T_*.
 \]

In view of \eqref{m12},  by enlarging $T_*$ we may assume that 
\begin{align}\label{mi}
	&m_1(t,x)\leq \epsilon^2(\sigma_0),\ \ | m_2(t,x)|\leq \epsilon^2(\sigma_0)\ \mbox{ for } \ t\geq T_*,\ x\in [g_\yy,h_\yy].
\end{align}

For fixed $\sigma\in [\sigma_0, \sigma_*]$, $x\in [g_\infty, h_\infty]$,   $T\geq T_*$, $s>0$ and each $(p,q)\in A_\sigma$, we consider the solution map 
\[
S_x(T+s, T)(p,q):=(\bar p,\bar q),
\]
defined by $(\bar p, \bar q)=(u(T+s,x), \tilde v(T+s,x)$, where $(u(t,x),\td v(t,x))$ solves
\[
		\begin{cases}u_t = m_1 + u(-\td d_1- u + k\td v),& t>T,\\
		\td v_t = -m_2+ \gamma hu -(d_2+\gamma)\td v +\gamma \td v(\td v - hu),& t>T,\\
		u(T,x)=p, \ v(T,x)=q.
	\end{cases}
\]

{\bf Claim 2}. For each $\sigma\in [\sigma_0,\sigma_*]$, $t\geq T_*$ and $x\in [g_\yy,h_\yy]$,
\begin{align}\label{3.36}
S_x(t+s, t)(\partial A_{\sigma}) \subset   A_{\sigma}\ {\rm for \ all}\ s>0.	
\end{align}
%and moreover, there exists a continuous function $\sigma\in (\frac{\td d_1}{k},1)\to\delta(\sigma)\in (0, \sigma)$ such that
%\begin{equation}\label{delta}
%S_x(t_0+1, t_0)(\bar A_{\sigma}) \subset   A_{\sigma-\delta(\sigma)} \mbox{ for all } t_0\geq T_\sigma, x\in [g_\infty, h_\infty], \sigma\in  (\frac{\td d_1}{k},1).
%\end{equation}

To prove \eqref{3.36}, it suffices to show that
\begin{align}
	&u(t,x)=M(\sigma)\ {\rm and}\ \td v(t,x)\leq \sigma\ \implies  \ u_t(t,x)<\epsilon(\sigma)(-k\sigma+\td d_1)<0,\label{3.37}\\
	&u(t,x)\leq M(\sigma )\ {\rm and}\ \td v(t,x)= \sigma\ \implies \ \td v_t(t,x)<F_*/2<0.\label{3.38}
\end{align}
Indeed, by \eqref{u-tdv} and \eqref{mi}, $u(t,x)=M(\sigma)>0\ {\rm and}\ \td v(t,x)\leq \sigma$ imply $x\in (g (t), h(t))$ and
\begin{align*}
	u_t(t,x)\leq \epsilon^2(\sigma)-(k\sigma-\td d_1+\epsilon(\sigma))\epsilon(\sigma)=\epsilon(\sigma)(-k\sigma+\td d_1)<0,
\end{align*}
which proves \eqref{3.37}. To verify \eqref{3.38}, suppose $\td v(t, x) = \sigma$ and $u(t,x)\leq M(\sigma)$. Then by  \eqref{u-tdv} and \eqref{mi} we obtain
\begin{align*}
	\td v_t(t,x) =& -m_2+ \gamma h u(1-\td v) -(d_2+\gamma)\td v +\gamma \td v^2\\
	\leq&\ \epsilon^2(\sigma)+\gamma h (k\sigma-\td d_1+\epsilon(\sigma))(1-\sigma) -(d_2+\gamma)\sigma +\gamma  \sigma^2\\
	=&\ \epsilon^2(\sigma)+(1-\sigma)\gamma h \epsilon(\sigma)+\gamma (1-hk)\sigma^2+[\gamma(hk-1)-d_2+\td d_1\gamma h]\sigma-\td d_1\gamma h\\
	=&\ \epsilon^2(\sigma)+(1-\sigma)\gamma h \epsilon(\sigma)+F(\sigma)\leq \epsilon_0^2+\gamma h \epsilon_0+F_*<F_*/2<0.
\end{align*}
 Hence {\eqref{3.38}} holds.  This proves \eqref{3.36} and so
   Claim 2 holds true. 
 \medskip
 
 We are now ready to reach a contradiction and hence complete the proof of Claim 1.
 Consider $P_n(t):=(u(t,x_n), \td v(t,x_n))$ for $t\in [T_*, t_n]$. Since $u(t_n, x_n)> \epsilon_0$ and $t_n>T_*$, by Claim 2 and the definition of $\sigma_*$,  there exists $\sigma_n(t)\in [\sigma_0, \sigma_*]$ such that
 $P_n(t)\in \partial A_{\sigma_n(t)}$ and $\sigma_n(t)$ is nonincreasing in $t$ for $t\in [T_*, t_n]$.   In particular,
 \[
 P_n([T_*, t_n])\subset A:=A_{\sigma_*}\setminus \overline A_{\sigma_0}.
 \]
 If we define
 \[\begin{cases}
 A^+:=\{(p, q)\in A: p> M(q)\},\\ A^-:=\{(p, q)\in A: p< M(q)\},\\ \Gamma:=\{(p, q)\in A: p=M(q)\},
 \end{cases}
 \]
 then from \eqref{3.37} and \eqref{3.38} we see that, for $t\in [T_*, t_n]$,
 \[\begin{cases}
 P_n(t)\in A^+\cup\Gamma \implies \tilde v_t\leq -c_0,\\ P_n(t)\in A^-\cup\Gamma\implies u_t\leq -c_0,
 \end{cases}
 \]
 where $c_0:=\min\{F_*/2, \epsilon(\sigma_0)(k\sigma_0-\td d_1)\}>0$. 
 
 Define
 \[\begin{cases}
 I_n^+:=\{t\in (T_*, t_n): P_n(t)\in A^+\}, \\  I_n^-:=\{t\in (T_*, t_n): P_n(t)\in A^-\}, \\ I_n^0:=\{t\in (T_*, t_n): P_n(t)\in \Gamma\}.
 \end{cases}
 \]
 Then $I_n^+$ and $I_n^-$ are open sets (possibly empty for one of them). Hence each of them is the union of some (at most countably many) non-overlapping intervals when it is not the empty set. In the following, we derive a contradiction in each of the possible cases.
 
 If one of $I_n^+$ and $I_n^-$ is empty, say $I_n^-=\emptyset$, then 
 \[
 \dd -\sigma_*\leq \tilde v(t_n, x_n)-\tilde v(T_*, x_n)= \int_{T_*}^{t_n}\tilde v_t(t,x_n)dt\leq -c_0 (t_n-T_*)\to-\infty \mbox{ as } n\to\infty,
 \]
 which is a contradiction. Similarly $I_n^+=\emptyset$ leads to a contradiction.
 
 If both $I_n^+$ and $I_n^-$ are the union of some non-overlapping intervals, say for some non-empty but at most countable index sets $K_n^+$ and $K_n^-$,
 \[
 I_n^+=\cup_{k\in K_n^+}(s_k, t_k),\ I_n^-=\cup_{k\in K_n^-}(\tilde s_k, \tilde t_k),
 \]
 then 
 \[
 \begin{cases}k\in I_n^+ \mbox{ and } s_k\not=T_*\implies s_k\in \Gamma,\\  k\in I_n^- \mbox{ and } \tilde s_k\not=T_*\implies \tilde s_k\in \Gamma.
 \end{cases}
 \]
 By the invariance property of $A_\sigma$ we have
 \[\begin{cases}
 s_k\in \Gamma\implies \tilde v(t_k, x_n)\leq \tilde v(s_k, x_n), i.e., \dd \int_{s_k}^{t_k}\tilde v_t(t, x_n)dt\leq 0\\ \tilde s_k\in \Gamma \implies u(\tilde t_{k}, x_n)\leq u(\tilde s_k, x_n), i.e., \dd \int_{\tilde s_k}^{\tilde t_k} u_t(t, x_n)dt\leq 0,
 \end{cases}
 \]
 and 
 \[\begin{cases}
 s_k=T_*\implies \tilde v(t_k)\leq \sigma_*\implies \dd\int_{s_k}^{t_k}\tilde v_t(t, x_n)dt\leq \sigma_*,\\
  \tilde s_k=T_*\implies u(\tilde t_k)\leq M(\sigma_*)\implies \dd\int_{\tilde s_k}^{\tilde t_k}u_t(t, x_n)dt\leq M(\sigma_*).
  \end{cases}
 \]
 It follows that 
 \[
 \int_{I_n^+} u_t(t,x_n)dt\leq M(\sigma_*),\ \int_{I_n^-}\tilde v_t(t,x_n)dt\leq \sigma_*.
 \]
 Hence
 \[\begin{aligned}
 \dd &-\sigma_*\leq \tilde v(t_n, x_n)-\tilde v(T_*, x_n)\leq \sigma_*+\int_{I_n^+\cup I_n^0} \tilde v_t(t,x_n)dt\leq \sigma_*-c_0 |I_n^+\cup I_n^0|,\\
 \dd &-M(\sigma_*)\leq u(t_n, x_n)-u(T_*, x_n)\leq M(\sigma_*)+ \int_{I_n^-\cup I_n^0} u_t(t,x_n)dt\leq M(\sigma_*)-c_0 |I_n^-\cup I_n^0|.
 \end{aligned}
 \]
 Adding the above inequalities we obtain the following contradiction:
 \[
 -2[\sigma_*+M(\sigma_*)]\leq -c_0(|I_n^-\cup I_n^0|+|I_n^+\cup I_n^0|)\leq -c_0(t_n-T_*)\to -\infty \mbox{ as } n\to\infty.
 \]
  Claim 1 is thus proved.
 
\smallskip

{\bf Claim 3}. $\lim_{t\to\infty} v(t,x)=1$ locally uniformly in $x\in \R$.

This follows from Claim 1 in the same way as argued in the proof of Lemma \ref{lem:2.10} for the case $d_1\geq 1$. 
\end{proof}

\begin{lemma}\label{lem:Theta2} Suppose $h_\yy-g_\yy<\yy$.
		If 
		\[
		\mbox{$d_1<1$ and  $(\gamma,h,k,d_1,d_2)\in \Theta_2$,}
		\]
		 then either {\rm (i)} \eqref{3.4} holds or {\rm (ii)}  there is an open set $\Omega\subset (g_\yy,h_\yy)$ with
			$|\Omega|=h_\yy-g_\yy$, $\Omega\not=(g_\infty, h_\infty)$
			 such that 
		\begin{align}
		&\lim_{t\to \yy} (u(t,x),  v(t,x))=(0, 1)\  \mbox{ uniformly for $x$ in any compact subset of } \Omega,\label{3.33}\\
		&\lim_{t\to \yy} (u(t,x),  v(t,x))=(kx_*-\td d_1, 1-x_*)\  \mbox{ for } x\in  (g_\yy,h_\yy)\backslash \Omega,\label{3.34}
		\end{align}
	where $x_*>0$ is the smallest positive root of $F(s)=0$ in $[0, 1]$.
	\end{lemma}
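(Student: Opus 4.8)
The plan is to fix $x\in(g_\yy,h_\yy)$ and, via the substitution $\td v=1-v$, regard \eqref{KnK1.2} in the form \eqref{u-tdv} as an asymptotically autonomous perturbation of the planar ODE
\[
u_t=u(-\td d_1-u+k\td v),\qquad \td v_t=\gamma hu-(d_2+\gamma)\td v+\gamma\td v(\td v-hu),
\]
obtained by discarding $m_1$ and $m_2$. Its nonnegative equilibria are $(0,0)$, i.e.\ $(u,v)=(0,1)$, and the coexistence points $(k\td v-\td d_1,\td v)$ with $\td v\in[0,1]$, $F(\td v)=0$ and $k\td v\ge\td d_1$, where $F$ is as in \eqref{F}; this set contains the point at $\td v=x_*$. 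By \eqref{3.9}, \eqref{3.19} and Lemma \ref{lem:2.10}, the orbit $t\mapsto(u(t,x),\td v(t,x))$ is eventually trapped in a fixed bounded rectangle $R\subset\{u\ge0,\ 0\le\td v\le M\}$ with $M<1$. Since the limiting system is competitive in the plane it has no nonconstant periodic orbit, so by the theory of asymptotically autonomous systems together with \eqref{m12} and Lemma \ref{general} (used to produce times along which $u_t,\td v_t\to0$), the $\omega$-limit set of the orbit in $\overline R$ is a single equilibrium of the limiting system.

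\textbf{Step 1 (pointwise dichotomy).} I would show that for every $x\in(g_\yy,h_\yy)$ either $(u(t,x),v(t,x))\to(0,1)$ or $(u(t,x),v(t,x))\to(kx_*-\td d_1,1-x_*)$. The main tool is the family $\{A_\sigma\}$ of invariant sets from the proof of Lemma \ref{lem:Theta1}. Because $F(0)<0$, the estimates \eqref{3.37}--\eqref{3.38} remain valid for every $\sigma\in(\td d_1/k,x_*)$, so $A_\sigma$ is eventually forward invariant for \eqref{u-tdv} on that range; running the monotone-$\sigma$ argument of Lemma \ref{lem:Theta1}, now with the barrier placed at $\sigma=x_*$ instead of at $\sigma=1$, one gets that either $\limsup_{t\to\yy}u(t,x)=0$ — in which case the $\td v$-equation in \eqref{u-tdv} forces $\td v(t,x)\to0$ and hence $(u,v)(t,x)\to(0,1)$ — or the orbit cannot be pushed below the level $\td v=x_*$, and then, by the $\omega$-limit conclusion above, it converges to $(kx_*-\td d_1,x_*)$, which also forces $kx_*-\td d_1>0$. \emph{This step is where I expect the real difficulty to lie}: showing that when $u(\cdot,x)\not\to0$ the orbit settles precisely at the smallest root $x_*$, and not at a larger coexistence equilibrium $(kx^{**}-\td d_1,x^{**})$ with $x^{**}>x_*$ nor inside a heteroclinic loop. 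Nonconvergence is excluded by planarity and competitiveness; the identification with $x_*$ should follow from $F\ge0$ on $[x_*,x^{**}]$ — which, via the inequality analogous to \eqref{3.38}, prevents $\td v$ from stabilising at any level in $(x_*,x^{**}]$ — equivalently from the instability of $(kx^{**}-\td d_1,x^{**})$ inside the physical region. I regard this phase-plane analysis as the technical heart of the proof.

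\textbf{Step 2 ($\Omega$ open, convergence on $\Omega$ locally uniform).} Linearising the limiting system at $(0,0)$ gives eigenvalues $-\td d_1<0$ and $-(d_2+\gamma)<0$, so $(0,0)$ is a hyperbolic sink; by \eqref{m12} there is a small neighbourhood $B$ of $(0,0)$ that is forward invariant for \eqref{u-tdv} for all large $t$ and on which $(u(t,x),\td v(t,x))\to(0,0)$ uniformly in $x$ (standard persistence of a sink under a forcing tending to $0$). Put $\Omega:=\{x\in(g_\yy,h_\yy):u(t,x)\to0\}$. If $x_0\in\Omega$, then by Step 1 $(u(T,x_0),\td v(T,x_0))\in B$ for some large $T$; since $u(t,\cdot)$ has a modulus of continuity uniform in $t$ (by the usual regularity for \eqref{KnK1.2}), a neighbourhood of $x_0$ lands in $B$ at time $T$, stays in $B$, and hence lies in $\Omega$. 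Thus $\Omega$ is open, and combining the uniform convergence on $B$ with the equicontinuity in $x$ and with $m_1\to0$ uniformly, $m_2\to0$ locally uniformly, yields \eqref{3.33}; moreover \eqref{3.34} is just the definition of $(g_\yy,h_\yy)\setminus\Omega$ together with Step 1.

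\textbf{Step 3 (full measure) and conclusion.} Write $\Sigma:=(g_\yy,h_\yy)\setminus\Omega$, a Borel set since $\Omega$ is open. By Step 1, $u(t,x)\to kx_*-\td d_1>0$ for every $x\in\Sigma$; as $u$ is bounded and $\Sigma$ is bounded, dominated convergence gives $\int_\Sigma u(t,y)\rd y\to(kx_*-\td d_1)|\Sigma|$, which together with $u\ge0$ and $\int_\R u(t,y)\rd y\to0$ (Lemma \ref{lemma3.5}) forces $|\Sigma|=0$, i.e.\ $|\Omega|=h_\yy-g_\yy$. Finally, if $\Omega=(g_\yy,h_\yy)$, then $u(t,\cdot)\to0$ pointwise on the compact interval $[g_\yy,h_\yy]\supset[g(t),h(t)]$, hence uniformly by equicontinuity, so $\max_{x\in[g(t),h(t)]}u(t,x)\to0$, and the argument in the proof of Lemma \ref{lem:2.10} then gives $v(t,x)\to1$ locally uniformly, i.e.\ \eqref{3.4} holds — conclusion (i). If instead $\Omega\ne(g_\yy,h_\yy)$, conclusion (ii) holds with the set $\Omega$ just constructed.
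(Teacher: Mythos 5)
Your overall architecture (invariant regions $\tilde A_\sigma$, the perturbed-ODE viewpoint, the role of the smallest root $x_*$) matches the paper, but two steps contain genuine gaps. First, your Step 1 pointwise dichotomy is not established. The Markus--Thieme theory for asymptotically autonomous planar systems does not give that the $\omega$-limit set is a single equilibrium; it only gives a chain-recurrent invariant set of the limit system, and with up to three equilibria present one must explicitly exclude cyclic chains. More seriously, your mechanism for ruling out convergence to a larger root $x^{**}$ --- that $F\ge 0$ on $(x_*,x^{**}]$ ``prevents $\tilde v$ from stabilising'' there --- fails because $F(x^{**})=0$, so $(kx^{**}-\td d_1,x^{**})$ is itself an equilibrium of the limit system and no differential inequality of the type \eqref{3.38} obstructs it. The paper's exclusion (Claim 2 in its proof) is of a different nature: if the orbit at some $x_1$ enters the forward-invariant region $\hat B$ above level $x_*$, then by continuous dependence at a \emph{fixed} time a whole spatial interval $[x_1-\epsilon,x_1+\epsilon]$ is trapped in $\hat B$, contradicting $|\Omega|=h_\infty-g_\infty$. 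That argument requires the full-measure property of $\Omega$ to be known \emph{before} the pointwise analysis, which is why the paper first constructs $\Omega$ via Egorov's theorem applied to $\int(u+|\td v|)\,dx\to 0$; in your ordering the measure statement only arrives in Step 3, after the dichotomy it is needed for, so the argument is circular as written.

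Second, your Step 2 and your final upgrade from pointwise to uniform convergence rely on ``a modulus of continuity uniform in $t$'' for $u(t,\cdot)$. Nonlocal diffusion has no smoothing effect, and no uniform-in-$t$ equicontinuity in $x$ is available --- this is exactly the lack-of-compactness issue the paper highlights in its introduction as the reason the local-diffusion arguments fail. The paper circumvents it by invoking continuity of $u(T,\cdot)$ and $\td v(T,\cdot)$ at a single fixed time $T$ (to spread membership of $\td A_{\td\sigma_*}$ to a neighbourhood) and then propagating via the forward invariance of $\td A_\sigma$, rather than by any $t$-uniform regularity. Your dominated-convergence derivation of $|\Sigma|=0$ from Lemma \ref{lemma3.5} is a nice, clean alternative to Egorov for the measure statement, but it cannot carry the proof on its own because it sits downstream of the unproven Step 1.
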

	\begin{proof}
 We must have $kx_*-\td d_1>0$, since  $kx_*-\td d_1\leq 0$ implies
\begin{align*}
	0=F(x_*)= \gamma h (kx_*-\td d_1)(1-x_*) -(d_2+\gamma)x_* +\gamma x_*^2\leq -(d_2+\gamma)x_* +\gamma x_*^2<0,
\end{align*}
which is clearly impossible. 

{\bf Step 1}. We show that there exists an open set $\Omega$ as described in \eqref{3.33}. 

For
\begin{align*}
	\sigma\in (\frac{\td d_1}{k},x_*) \mbox{ and $\epsilon_1>0$ small to be determined as later argument desires,}
\end{align*}
 define
 \[
\tilde \epsilon(\sigma):=\min\{k\sigma-\td d_1, \epsilon_1\},\ \tilde M(\sigma):=k\sigma-\td d_1+ \tilde\epsilon(\sigma),
 \]
and
\begin{align*}
	\td A_\sigma:=\{(p,q)\in\R^2:0\leq p<\td M(\sigma) ,\  q< \sigma\}.
\end{align*} 
Fix $\td\sigma_*\in (\frac{\td d_1}k, x_*)$. Since $F(0)<0$ necessarily
\begin{align*}
\td F_*:=\max_{s\in [0,\td\sigma_*]} F(s)<0.
\end{align*}

From \eqref{3.5} and \eqref{3.13}, we see
\begin{align*}
\lim_{t\to\yy}\int_{g_\yy-1}^{h_\yy+1}[u(t,y)+|\td v(t,y)|] \rd y=0.
\end{align*}
Hence $u(t,x)+|\td v(t,x)|\to 0$ in measure for $x$ over $[g_\infty, h_\infty]$. By Egorov's theorem, for each small $\epsilon>0$, there exists a set $\Omega_\epsilon\subset (g_\infty, h_\infty)$ such that
\[
\begin{cases}
 |\Omega_\epsilon|\geq h_\infty-g_\infty-\epsilon,\\[1mm]
u(t,x)+|\td v(t,x)|\to 0 \mbox{ uniformly for } x\in \Omega_\epsilon \mbox{ as } t\to\infty.
\end{cases}
\]
Therefore, there exists $T_\epsilon>0$ such that 
\[
(u(t,x),\td v(t,x))\in \td A_{\tilde\sigma_*/2} \mbox{ for all  $t\geq T_\epsilon$ and $x\in\Omega_\epsilon$}.
\]
By the continuous dependence of $u(T_\epsilon, x)$ and $ \td v(T_\epsilon, x)$ on $x$, we see that there exists an open set  $ O_\epsilon$ such that
\[
\begin{cases} (g_\infty, h_\infty)\supset O_\epsilon\supset \Omega_\epsilon,\\[1mm]
(u(T_\epsilon,x),\td v(T_\epsilon,x))\in \td A_{\tilde\sigma_*} \mbox{ for all   $x\in\overline O_\epsilon$}.
\end{cases}
\]
We are now in a position to show
\begin{equation}\label{O}
\lim_{t\to\infty} \max_{x\in\overline O_\epsilon} u(t,x)=0
\end{equation}
by repeating the argument that leads to the conclusion in Claim 1 of the proof of Lemma \ref{lem:Theta1}, except that  we replace $(A_{\sigma_*}, [g_\infty, h_\infty], \sigma_*, F_*)$ there  by the above defined $(\td A_{\td \sigma_*}, \overline O_\epsilon, \tilde \sigma_*,\td F_*)$. 

Since $\epsilon>0$ is arbitrary, choosing $\epsilon_n>0$ converging to 0 monotonically as $n$ increasing to $\infty$,  we can obtain a sequence of open sets $\{O_{\epsilon_n}\}$
such that \eqref{O} holds for each $O_{\epsilon_n}$. Let $\Omega:=\cup_{n=1}^\infty O_{\epsilon_n}$. Then $\Omega\subset (g_\infty, h_\infty)$ is an open set with $|\Omega|=h_\infty-g_\infty$,
and since \eqref{O} holds for every $O_{\epsilon_n}$ we see that 
\begin{equation}\label{O-comp}
\lim_{t\to\infty}  u(t,x)=0 \mbox{ uniformly for $x$ in any compact subset of $\Omega$}.
\end{equation}
This implies that
\[
\td v_t = -\td m_2 -(d_2+\gamma)\td v +\gamma \td v^2
\]
with $\td m_2=\tilde m_2(t,x)\to 0$ as $t\to\infty$ uniformly for $x$ in any compact subset of $\Omega$. This fact, together with $\tilde v(t,x)\leq 1$ and
$\liminf_{t\to\infty} \td v(t,x)\geq 0$ uniformly in $x$, leads to
\[
\lim_{t\to\infty}\td v(t,x)=0 \mbox{ uniformly for $x$ in any compact subset of $\Omega$}
\]
by a simple comparison argument; we omit the details. Step 1 is now completed.

\medskip

{\bf Step 2.} Let $\Omega$ be the maximal open set contained in $(g_\infty, h_\infty)$ such that \eqref{3.33} holds. If $\Omega=(g_\infty, h_\infty)$, then \eqref{3.4} holds.

Arguing indirectly we assume that $\Omega=(g_\infty, h_\infty)$, but \eqref{3.4} does not hold. Since the first identity in \eqref{3.4} implies the second, we see that there must exist sequences $t_n\to\infty$ and $x_n\in (g (t_n), h(t_n))$ such that
\begin{equation}\label{epsilon0}
u(t_n, x_n)\geq\epsilon_0 \mbox{ for all } n\geq 1 \mbox{ and some } \epsilon_0>0.
\end{equation}

By passing to a subsequence we may assume that $x_n\to x^*\in [g_\infty, h_\infty]$. By our assumption, \eqref{3.33} holds and so
$(u(t,x^*),v(t, x^*))\to (0,1)$ as $t\to\infty$ if $x^*\in (g_\infty, h_\infty)$. If $x^*=g_\infty$ or $h_\infty$, then $u(t, x^*)\equiv 0$ and by \eqref{3.14}, $v(t,x^*)\to 1$ as $t\to\infty$. Thus we always have 
\[\mbox{$(u(t,x^*),v(t, x^*))\to (0,1)$ as $t\to\infty$. }
\]

Let $\td A_\sigma$, $\td\sigma_*$ and $\td F_*$ be defined as in Step 1 above. Then there exists $T_0>0$ such that 
\[
(u(T_0, x^*), \tilde v(T_0,x^*))\in \td A_{\td\sigma_*/2}.
\]
By continuous dependent of $u(T_0, x)$ and $\td v(T_0, x)$ on $x$, there exists a $\delta>0$ small such that 
\[
(u(T_0,x),\td v(T_0,x))\in \td A_{\tilde\sigma_*} \mbox{ for all   $x\in O_\delta:=[x^*-\delta, x^*+\delta]\cap [g_\infty, h_\infty]$}.
\]
This implies, as in Step 1,
\begin{equation}\label{O_delta}
\lim_{t\to\infty} \max_{x\in O_\delta} u(t,x)=0
\end{equation}
by repeating the argument that leads to the conclusion in Claim 1 of the proof of Lemma \ref{lem:Theta1}. But this is a contradiction to \eqref{epsilon0}. This completes Step 2.

{\bf Step 3}. 
Let $\Omega$ be the maximal open set contained in $(g_\infty, h_\infty)$ such that \eqref{3.33} holds.  Suppose that $\Omega\not=(g_\infty, h_\infty)$ and 
let $x_1\in (g_\infty, h_\infty)\setminus \Omega$. We are going to show that
\begin{align*}
	\lim_{t\to \yy} u(t,x_1)=kx_*-\td d_1,\ \ 	\lim_{t\to \yy} \td v(t,x_1)= x_*.
\end{align*}

Let
\begin{align*}
	v_1:=\liminf_{t\to\yy} \td v(t,x_1).
\end{align*}
Then there is a sequence $t_n\to\infty$ such that $ \td v(t_n,x_1)\to v_1$ and  $\td v_t(t_n,x_1)\to 0$ as $n\to \yy$. By passing to a subsequence we may also assume that $u(t_n,x_1)\to u_1$. From the equation of $\td v_t$ and $m_2\to 0$ as $t\to \yy$, we deduce 
\begin{align}\label{3.42}
	0=\gamma hu_1(1-v_1) -(d_2+\gamma) v_1 +\gamma  v_1^2.
\end{align}
We  show next that 
\begin{align}\label{3.43}
	v_1\geq x_*.
\end{align}
Arguing indirectly we assume $v_1<x_*$. Using $F(x_*)=0$ we see that the function 
\[
G(s):=\frac{(d_2+\gamma) s -\gamma  s^2}{\gamma h(1-s)}=\frac s{\gamma h}\Big(\frac {d_2}{1-s}+\gamma\Big)
\]
satisfies $G(x_*)=kx_*-\td d_1$. By \eqref{3.42} we obtain $G(v_1)=u_1$. Clearly $G(s)$ is strictly increasing for $s\in (0,1)$. Therefore 
\[
\mbox{$v_1<x_*$ implies
$u_1=G(v_1)<G(x_*)=k x_*-\td d_1$.}
\]
We show below that this leads to a contradiction and therefore \eqref{3.43} must hold. Indeed, $u_1<kx_*-\td d_1$ and $v_1<x_*$ imply that for any given $\hat\sigma_*<x_*$  close enough to $x_*$, with $\td A_\sigma$ as defined in Step 1,
\begin{align*}
	(u(t_n,x_1), \td v(t_n,x_1))\subset \td A_{\hat\sigma_*} \mbox{ for all large } n.
\end{align*}
Clearly $\hat F_*:=\max_{s\in[0,\hat\sigma_*]}F(s)<0.$
We may now repeat the argument that leads to the conclusion in Claim 1 of the proof of Lemma \ref{lem:Theta1} but with $(A_{\sigma_*}, [g_\infty, h_\infty], \sigma_*, F_*)$ there  replaced by $(\td A_{\hat \sigma_*}, x_1, \hat \sigma_*,\hat F_*)$, to conclude that
\[
\lim_{t\to\infty} u(t,x_1)=0.
\]
This implies $\lim_{t\to\infty} \td v(t,x_1)=0$ by making use of the equation satisfied by $\tilde v(t, x_1)$. But this is a contradiction to our assumption that $x_1\not\in\Omega$. We have thus proved \eqref{3.43}.

Let 
\begin{align*}
	v_2=\limsup_{t\to\yy} \td v(t,x_1).
\end{align*}
Then there exists a sequence $t_n$ such that 
\[
	v_2=\lim_{n\to\yy} \td v(t_n,x_1), \ \ 	0=\lim_{n\to\yy} \td v_t(t_n,x_1).
	\]
	By passing to a subsequence there exists $u_2\geq 0$ such that
	\[
		u_2:=\lim_{n\to\yy}  u(t_n,x_1).
		\]
		 Then the equation of $\td v_t$ and $m_2\to 0$ as $t\to \yy$ yield
 \begin{align}\label{v2}
 		0=\gamma hu_2(1-v_2) -(d_2+\gamma) v_2 +\gamma  v_2^2.
 \end{align}
By \eqref{3.40b}, we know 
\[
\mbox{$u_2\leq kM-\td d_1$ and $v_2\leq M$ for some $0<M<1$.}
\]

We now set to show that
\begin{align}\label{3.45}
	v_2\leq x_*.
\end{align}
Argue indirectly we assume that $v_2>x_*$ and seek a contradiction.  

Since $F(s)$ is a quadratic function satisfying $F(0)<0$ and $F(1)<0$, either $x_*$ is a degenerate root, namely
\begin{equation}\label{x*}
\mbox{ $F(s)<0$ for $s\in [0, x_*)\cup(x_*, 1]$,}
\end{equation}
 or there is another root $x^*\in (x_*, 1)$ such that
\begin{equation}\label{x**}
F(s)<0 \mbox{ for } s\in [0, x_*)\cup (x^*, 1],\ F(s)>0 \mbox{ for } s\in (x_*, x^*).
\end{equation}
Using the function $G (s)$ defined earlier we obtain from \eqref{v2} and $F(x_*)=0$ that
\[
u_2=G (v_2)>G (x_*)=kx_*-\td d_1.
\]

We will prove \eqref{3.45} by deriving a contradiction under the assumption $v_2>x_*$ for both cases \eqref{x*} and \eqref{x**}. 
\medskip

{\bf Claim 1}. Case \eqref{x*} leads to a contradiction.

When \eqref{x*} happens,  we fix
\begin{align*}
	\bar \sigma_*\in (M, 1) \mbox{  such that } k\bar\sigma_*-\td d_1>u_2,
\end{align*}
and  then fix
\begin{align*}
	\bar\sigma_0\in (x_*,\bar\sigma_*) \mbox{ close to $x_*$ such that $k\bar\sigma_0-\td d_1<u_2$}, \ \bar\sigma_0<v_2.
\end{align*}
It is now clear that
\begin{align*}
	 \bar F_*:=\max_{s\in[\bar\sigma_0, \bar\sigma_*]}F(s)<0.
\end{align*} 
We next choose $\epsilon_1>0$ small enough such that
\[
\epsilon_1^2+\gamma h \epsilon_1+\bar F_*<\bar F_*/2<0,\ k\bar\sigma_0-\td d_1+\epsilon_1<u_2,
\]
and define, for
$\sigma\in (x_*, \bar\sigma_*]$,
 \[
\bar\epsilon(\sigma):=\min\{\sigma-x_*, \epsilon_1\},\ \bar M(\sigma):=k\sigma-\td d_1+ \bar\epsilon(\sigma),
 \]
and
\begin{align*}
	\bar A_\sigma:=\{(p,q)\in\R^2: 0\leq p<\bar M(\sigma) ,\  q< \sigma\}.
\end{align*} 
Clearly $\bar M(\sigma)$ is continuous and strictly increasing in $\sigma$ with 
\[
\bar M(x_*)=kx_*-\td d_1<
\bar M(\bar\sigma_0)<u_2<\bar M(\bar\sigma_*).
\]
We also have
\[
\bar \sigma_0<v_2\leq M<\bar\sigma_*.
\]
Therefore, for all large $n$, 
\[
(u(t_n, x_1), \td v(t_n , x_1))\in \bar A_{\bar \sigma_*}\setminus \bar A_{\bar\sigma_0},
\]
and by \eqref{m12},
\begin{align}\label{mi-bar}
	&m_1(t,x)\leq \bar\epsilon(\bar \sigma_0)^2,\ \ | m_2(t,x)|\leq \bar\epsilon(\bar \sigma_0)^2\ \mbox{ for } \ t\geq t_n.
\end{align}
Fix such an $n$ and let the solution map $S_x(t+s, t)$ be defined as in Claim 1 of the proof of Lemma \ref{lem:Theta1}; then the same calculations as in the proof  there yield the following:

 For each $\sigma\in [\bar \sigma_0,\bar\sigma_*]$, $t\geq t_n$,
\begin{equation}\label{x1}\begin{cases}
S_{x_1}(t+s, t)(\partial \bar A_{\sigma}) \subset   \bar A_{\sigma}\ {\rm for \ all}\ s>0,\\
u(t,x_1)= \bar M(\sigma)\ {\rm and}\ \td v(t,x_1)\leq \sigma\ \implies  \ u_t(t,x_1)<\bar\epsilon(\sigma)(-k\sigma+\td d_1)<0,\\
u(t,x_1)\leq \bar M(\sigma)\ {\rm and}\ \td v(t,x_1)= \sigma\ \implies \ \td v_t(t,x_1)<\bar F_*/2<0.
\end{cases}
\end{equation}

Consider $P(t):=(u(t,x_1), \td v(t,x_1))$ for $t\in [t_n, t_{n+m}]$, $m\geq 1$. Since for every $m\geq 1$,
\[
(u(t_{n+m}, x_1), \td v(t_{n+m} , x_1))\in \bar A_{\bar \sigma_*}\setminus \bar A_{\bar\sigma_0},
\]
by \eqref{x1}  there exists $\sigma(t)\in [\bar\sigma_0, \bar\sigma_*]$ such that
 $P(t)\in \partial \bar A_{\sigma(t)}$ and $\sigma(t)$ is nonincreasing in $t$ for $t\in [ t_n, t_{n+m}]$. 
 
 We may now apply the same argument used to prove Claim 1 in the proof of Lemma \ref{lem:Theta1} to obtain  a contradiction,
 and Claim 1 is proved.
 
 \medskip
 
 {\bf Claim 2}. Case \eqref{x**} also leads to a contradiction.

 Since $u_2>kx_*-\td d_1$ and $ v_2>x_*$, we can find $\hat\sigma\in (x_*, x^*)$ such that
 \[
 u_2>k\hat\sigma-\td d_1,\ v_2>\hat\sigma.
 \]
 Fix $\hat\epsilon>0$ small so that 
 \[
 -\hat \epsilon^2-\gamma h \hat\epsilon+F(\hat\sigma)>0,\ \ k\hat\sigma-\td d_1- \hat\epsilon>0.
 \]
 Then choose $\hat T>0$ so that
 \[
 m_1(t,x)\leq \hat\epsilon^2,\ |m_2(t,x)|\leq \hat\epsilon^2 \mbox{ for } t\geq \hat T \mbox{ and } x\in [g_\infty, h_\infty].
 \]
 We now
define 
\begin{align*}
	\hat B:=\{(p,q): p> k\hat\sigma-\td d_1- \hat\epsilon,\  q> \hat\sigma\}.
\end{align*}
Clearly $(u_2, v_2)\in \hat B$ and therefore $(u(t_n, x_1), \tilde v(t_n, x_1))\in \hat B$ for all large $n$, say $n\geq n_0$. By enlarging $n_0$ we may also assume that
$t_{n_0}\geq\hat T$. By the continuous dependence of $u(t_{n_0}, x)$ and $\tilde v(t_{n_0},x)$ on $x$, there exists $\epsilon>0$ sufficiently small so that
\[
(u(t_{n_0}, x), \tilde v(t_{n_0}, x))\in \hat B \mbox{ for all } x\in [x_1-\epsilon, x_1+\epsilon].
\]
We show below that for each $x\in [x_1-\epsilon, x_1+\epsilon]$, the trajectory $\{(u(t,x), \td v(t,x)): t\geq t_{n_0}\}$ is trapped inside $\hat B$.
It suffices to show that
 for any $t\geq t_{n_0}$,
\begin{align}
	&u(t,x)=k\hat \sigma-\td d_1- \hat\epsilon\ {\rm and}\ \td v(t,x)\geq \hat\sigma\ {\rm implies } \ u_t(t,x)>0,\label{3.48}\\
	&u(t,x)\geq k\hat\sigma-\td d_1- \hat\epsilon\ {\rm and}\ \td v(t,x)= \hat\sigma\ {\rm implies } \ \td v_t(t,x)>0.\label{3.49}
\end{align}

Indeed,
 \eqref{3.48} follows from the simple calculation below
\[
u_t = m_1 + u(-\td d_1- u + k\td v)\geq (k\hat \sigma-\td d_1- \hat\epsilon)\hat\epsilon>0,
\]
and to verify \eqref{3.49}, we calculate
\begin{align*}
\td v_t(t,x) =& -m_2+ \gamma h u(1-\td v) -(d_2+\gamma)\td v +\gamma \td v^2\\
\geq &-\hat\epsilon^2+\gamma h (k\hat\sigma-\td d_1-\hat\epsilon)(1-\hat\sigma) -(d_2+\gamma)\hat\sigma +\gamma  \hat\sigma^2\\
=&-\hat\epsilon^2-(1-\hat\sigma)\gamma h \hat\epsilon+\gamma (1-hk)\hat\sigma^2+[\gamma(hk-1)-d_2+\td d_1\gamma h]\hat\sigma-\td d_1\gamma h\\
=&-\hat\epsilon^2-(1-\hat\sigma)\gamma h \hat\epsilon+F(\hat\sigma)>-\hat\epsilon^2-\gamma h \hat\epsilon+F(\hat\sigma)>0.
\end{align*}
Thus \eqref{3.49} holds and we have proved that
\[
(u(t,x), \td v(t,x))\in \hat B \mbox{ for all } t\geq t_{n_0},\ x\in [x_1-\epsilon, x_1+\epsilon].
\]
It follows that $[x_1-\epsilon, x_1+\epsilon]\subset [g_\infty, h_\infty]\setminus \Omega$, which is a contradiction to $|\Omega|=h_\infty-g_\infty$.
Claim 2 is now proved.

The above Claims 1 and 2 prove that \eqref{3.45} holds, namely  $v_2\leq x_*$. As we have already proved in \eqref{3.43} that $v_1\geq x_*$, by the definitions of $v_1$ and $v_2$ we must have $v_1=v_2=x_*$ and
\[
\mbox{$\tilde v(t,x_1)\to x_*$ as $t\to\infty$.}
\]
 It follows that $u(t,x_1)$ satisfies
\[
u_t=\tilde m_1+u(kx_*-\td d_1-u)
\]
with $\td m_1=\tilde m_1(t, x_1)\to 0$ as $t\to\infty$,  which implies
\[
u(t,x_1)\to kx_*-\td d_1 \mbox{ as } t\to\infty.
\]
This concludes Step 2 and the proof of the lemma is now complete.
\end{proof}

Theorem \ref{th1.2} now follows directly from Lemmas  \ref{lem:2.10}, \ref{lem:Theta1} and \ref{lem:Theta2}.

\section{Proof of Theorem \ref{th1.4}}
We divide the proof into three steps.

{\bf Step 1.} We show that  $h_\infty=-g_\infty=\infty$.

We only show that $h_\infty=\yy$ implies $g_\infty=-\infty$, as it can be shown similarly that $g_\infty=-\infty$ implies $h_\infty=\infty$.

Assume on the contrary that $h_\infty=\infty>-g_\infty$. Then by Remark \ref{remark3.6}, 
	\begin{align}\label{3.49a}
	\lim\limits_{t \rightarrow \infty}\int_{-\infty}^{L}u(t,y)\rd y = 0 \mbox{ for every } L>0.
\end{align}
Since we always have
\[
\limsup_{t\to\infty} v(t,x)\leq 1 \mbox{ uniformly for } x\in\R,
\]
for any given $\epsilon>0$ small, there exists a  large $T=T_\epsilon>0$ such that
\begin{align*}
		\dd	u_t \geq  d_1\int_{g (T)}^{h(T)}J_1(x - y)u(t, y)\rd y - d_1u + u[1-(k+\epsilon)-u] \mbox{ for } 
	t > T, ~g (T) < x < h(T).
\end{align*}

However,  as $1-k-\epsilon>0$ and we can make
   $h(T)-g(T)$ as large as we want by enlarging $T$,   the above inequality for $u$ implies, for such $\epsilon$ and $T$, by the comparison principle  and \cite[Propositions 3.5 and 3.6]{Cao-2019-JFA}, that 
\begin{align*}
	\liminf_{t\to \yy} \inf_{x\in [g(T),h(T)]} u(t,x)>0.
\end{align*}
This contradicts \eqref{3.49a}. Step 1 is finished.

{\bf Step 2.} We show that  $h\geq 1$ implies 
\begin{equation}\label{1-0}\mbox{
    $\lim\limits_{t \rightarrow \infty}u(t, x) = 1, \lim\limits_{t \rightarrow \infty}v(t, x) = 0$ locally uniformly for $x\in\R$.}
    \end{equation}

 We always have 
	\begin{equation}\label{leq1}
		\begin{aligned}
		\limsup\limits_{t\to\yy} [\sup_{x\in \R} u(t, x)]\leq 1,\ \ 
		\limsup\limits_{t\to\yy} [\sup_{x\in \R} v(t, x)]\leq 1.
		\end{aligned}
	\end{equation}
In view  $1 - k > 0$,  by making use of Lemma 3.14 in \cite{DY-2022-DCDS}, we derive
\begin{equation}
	\begin{aligned}
		\liminf\limits_{t\to\yy} u(t, x)\geq 1 - k : = \underline{u}_1~{\rm ~locally ~uniformly ~in}~ \mathbb{R}.
		\nonumber
	\end{aligned}
\end{equation}
The following proof will be presented according to two cases. 

{\bf Case 1}. $1 - h(1-k) \leq 0$.

In this case $1 - h\underline{u}_1 \leq 0.$ By  Proposition 4 (i) in \cite{Zwy-2022-dcds-b}, we have 
$$\limsup\limits_{t\to\yy} v(t, x)\leq 0 ~{\rm ~locally ~uniformly ~in}~ \mathbb{R}.$$
Since $v(t, x) \geq 0$, it follows that $\lim\limits_{t \rightarrow \infty}v(t, x) = 0$ locally uniformly in $\R$. 

Fix $L\gg 1$ and $0 < \varepsilon_1 \ll 1$. There  exists $T=T_{\varepsilon_1,L} > 0$ such that $v(t, x) \leq  \varepsilon_1$ and $h(T)-g(T)\gg L$ for $t > T$ and  $x\in [-L,L]$.   Thus, $u$ satisfies
\begin{align*}
	\dd	u_t \geq  d_1\int_{-L}^{L}J_1(x - y)u(t, y)\rd y - d_1u + u(1-k\epsilon_1 - u )  \ \mbox{ for } 
t > T, ~ -L < x < L.
\end{align*}
Since both $0<\epsilon_1\ll 1$ and $L\gg 1$ can be chosen arbitrarily, a simple comparison argument can be used to show that  $\liminf\limits_{t\to\yy} u(t, x)\geq 1$ locally uniformly for $x\in \R$. This, combined with $	\limsup\limits_{t\to\yy} u(t, x)\leq 1$,  gives $\lim\limits_{t \rightarrow \infty}u(t, x) = 1$ locally uniformly for $x\in \R$. Thus \eqref{1-0} holds in Case 1.

{\bf Case 2}. $1 - h(1 - k) > 0.$ 

Now
$$1 - h\underline{u}_1 =  1 - h(1-k) > 0.$$
By  Lemma 3.14 (ii) in \cite{DY-2022-DCDS}, we have  
$$\limsup\limits_{t\to\yy} v(t, x)\leq 1 - h\underline{u}_1 : = \overline{v}_2 ~{\rm ~locally ~uniformly ~in}~ \mathbb{R}.$$ 
Clearly 
$\underline{u}_2:=1 - k\overline{v}_2 = 1 - k(1- h(1 - k)) = (1-k)(1 + kh) > 0.$ According to  Lemma 3.14 (i) in \cite{DY-2022-DCDS}, we have
$$\liminf\limits_{t\to\yy} u(t, x)\geq \underline u_2  ~{\rm ~locally ~uniformly ~in}~ \mathbb{R}.$$ 

If $1-h\underline{u}_2 \leq 0,$  then similar to Case 1, we deduce $ \lim\limits_{t \rightarrow \infty}v(t, x) = 0$ and then $\lim\limits_{t \rightarrow \infty}u(t, x) = 1$ locally uniformly for $x\in\R$.

If $1-h\underline{u}_2 > 0,$ then
$$\limsup\limits_{t\to\yy} v(t, x)\leq 1 - h\underline{u}_2 : = \overline{v}_3 ~{\rm ~locally ~uniformly ~in}~ \mathbb{R}.$$ 
This and $\underline u_3:=1 - k\overline{v}_3 = (1 - k)(1 + kh + kh^2) > 0$ imply, as above,
$$\liminf\limits_{t\to\yy} u(t, x)\geq   \underline{u}_3 ~{\rm ~locally ~uniformly ~in}~ \mathbb{R}.$$ 

Continue with this procedure, we will obtain a sequence $\{\underline u_j\}$ and $\{\overline v_j\}$ such that
 \[
 \overline{v}_{j+1} = 1 - h\underline{u}_j, \ \underline{u}_{j + 1} = 1 - k\overline{v}_{j+1} \mbox{ for } j=1,2,...
 \]
 and there are two possibilities: 
 
  (a) there is a first $j\geq 1$ such that
    $h\underline{u}_j \geq 1$, then as in Case 1 we deduce \eqref{1-0}.
        
(b) $h\underline{u}_j < 1$ for all $j=1,2,...$. Then repeating the above analysis we obtain
$$\limsup\limits_{t\to\yy} v(t, x)\leq \overline{v}_j ~{\rm ~locally ~uniformly ~in}~ \mathbb{R},$$
$$\liminf\limits_{t\to\yy} u(t, x)\geq \overline{u}_j ~{\rm ~locally ~uniformly ~in}~ \mathbb{R}.$$
Moreover, $1\geq \underline{u}_j = (1-k)\sum_{i=0}^{j-1}(hk)^i $ for every $j\geq 1$. This implies that 
 $hk < 1$ and
$$\lim\limits_{j\rightarrow \infty}(\underline{u}_j, \overline v_j) = (\frac{1 - k}{1 - hk} , \frac{1 - h}{1 - hk}).$$
Since $\overline{v}_j > 0$ for all $j$ we further deduce $h \leq 1$. 

Summarising, we see that case (a) must happen when $h>1$. When $h=1$, if case (a) happens then the above discussion indicates that \eqref{1-0} holds, and if case (b) happens, then 
\[
\lim\limits_{j\rightarrow \infty}(\underline{u}_j, \overline v_j) = (\frac{1 - k}{1 - hk} , \frac{1 - h}{1 - hk})=(1,0),
\]
which, in view of \eqref{leq1},  again implies \eqref{1-0}. Therefore $h\geq 1$ always leads to \eqref{1-0}. This concludes Step 2.

{\bf Step 3.} We show that $h\in (0, 1)$ implies
 $$\lim\limits_{t\rightarrow\infty}u(t, x) = \frac{1 - k}{1 - hk}, ~\lim\limits_{t\rightarrow\infty}v(t, x) = \frac{1 - h}{1 - hk} {\rm ~locally~ uniformly~ in~} \mathbb{R}.$$
 
 In this situation, apart from the sequences
  $\{\underline u_j\}$ and $\{\overline v_j\}$ obtained in Step 2, we can use $0<h<1$ to define another two analogous sequences
    $\{\overline u_j\}$ and $\{\underline v_j\}$ with $\underline v_1=1-h$
  such that
 \[
 \overline{u}_{j+1} = 1 - k\underline{v}_j, \ \underline{v}_{j + 1} = 1 - h\overline{u}_{j+1} \mbox{ for } j=1,2,...,
 \]
 $$\limsup\limits_{t\to\yy} u(t, x)\leq \overline{u}_j ~{\rm ~locally ~uniformly ~in}~ \mathbb{R},$$
$$\liminf\limits_{t\to\yy} v(t, x)\geq \underline{v}_j ~{\rm ~locally ~uniformly ~in}~ \mathbb{R}.$$
It should be noted that $0<k<1$ and $0<h<1$ guarantee that these sequences are defined for all $j\geq 1$.

It follows that 
\[
\underline{v}_j = (1-h)\sum_{i=0}^{j-1}(hk)^i \to \frac{1-h}{1-hk} \mbox{ as } j\to\infty,
\]
and so $\overline u_j=1-k\underline v_j\to \frac{1-k}{1-hk} \mbox{ as } j\to\infty$.
We thus obtain 
$$\limsup\limits_{t\to\yy} u(t, x)\leq \frac{1 - k}{1 - hk}, ~\liminf\limits_{t\to\yy} v(t, x)\geq \frac{1 - h}{1 - hk} \mbox{ locally uniformly for } x\in\R.$$

From the sequences $\{\underline u_j\}$ and $\{\overline v_j\}$ obtained in Step 2,  we also have
$$\liminf\limits_{t\to\yy} u(t, x)\geq \frac{1 - k}{1 - hk}, ~\limsup\limits_{t\to\yy} v(t, x)\leq \frac{1 - h}{1 - hk} \mbox{ locally uniformly for } x\in\R.$$
The proof of Theorem \ref{th1.4} is now complete. \hfill $\Box$

\end{document}